\documentclass[10pt, notitlepage]{amsart}
\usepackage[utf8]{inputenc}
\usepackage[english]{babel}
\usepackage{setspace}
\usepackage{tikz-cd}
\usepackage{amsmath}
\usepackage{amssymb}
\usepackage{amsthm}
\usepackage{amsfonts}
\usepackage{bbm}
\usepackage{float}
\usepackage{braket}
\usepackage{multicol}
\usepackage{mathtools}
\usepackage{url}
\usepackage{hyperref}
\usepackage{enumitem}

\usetikzlibrary{decorations.pathmorphing}

\theoremstyle{definition}
\newtheorem{theorem}{Theorem}[section]
\newtheorem*{theorem*}{Theorem}
\newtheorem{ltheorem}{Theorem}[section]

\newtheorem{lcorollary}[ltheorem]{Corollary}

\newtheorem{llemma}[ltheorem]{Lemma}

\newtheorem{corollary}[theorem]{Corollary}
\newtheorem{lemma}[theorem]{Lemma}
\newtheorem{definition}[theorem]{Definition}

\newtheorem{proposition}[theorem]{Proposition}
\newtheorem{proposition-definition}[theorem]{Proposition-Definition}
\newtheorem{remark}[theorem]{Remark}

\newtheorem*{conjecture*}{Conjecture}

\makeatletter
\renewcommand\tableofcontents{%
	\null\hfill\textbf{\Large\contentsname}\hfill\null\par
	\@mkboth{\MakeUppercase\contentsname}{\MakeUppercase\contentsname}%
	\@starttoc{toc}%
}
\makeatother

\title[Bijective rigidity and injectivity of the comparison map]{Bijective rigidity of uniform Roe algebras and injectivity of the comparison map}
\author{K. Krutoy}
\date{\today}
\email{krutoy@imj-prg.fr}
\address{Institut de Math\'ematiques de Jussieu - Paris Rive Gauche (IMJ-PRG) \\ Universit\'e Paris Cit\'e, B\^atiment Sophie Germain, 8 Place Aur\'elie Nemours, 75013 Paris, France}

\begin{document}
	\maketitle \vspace{-20pt}
	\begin{abstract}
		We show that, for uniformly locally finite metric spaces $X$ and $Y$ with isomorphic uniform Roe algebras $C^*_u(X)$ and $C^*_u(Y)$, the existence of a bijective coarse equivalence $f \colon X \to Y$ is equivalent to the injectivity of the $0$th comparison map appearing in the HK conjecture for coarse groupoids.

		We further prove that the $0$th comparison map is injective unconditionally. Moreover, if the underlying space is coarsely connected, this map is in fact split-injective.
	\end{abstract}

	\section*{Introduction}
	\setcounter{section}{0}
	Coarse geometry studies metric spaces by focusing on their large-scale structure while disregarding small-scale features. Since local properties play no essential role in this setting, it is customary to restrict attention to discrete metric spaces. Let $(X,d)$ and $(Y,\partial)$ be metric spaces. A map $f \colon X \to Y$ is called \emph{coarse} if for every $R \ge 0$ there exists $S \ge 0$ such that $f$ maps subsets of $X$ of diameter at most $R$ to subsets of $Y$ of diameter at most $S$. Coarse maps are precisely those maps that preserve the large-scale geometry of metric spaces. Two coarse maps $f,g \colon X \to Y$ are said to be \emph{close} if they are at bounded distance, that is,
	\[
		\sup_{x \in X} \partial\bigl(f(x), g(x)\bigr) < \infty.
	\]
	Throughout this paper, we work with \emph{uniformly locally finite} metric spaces (see Definition~\ref{definition: ulf space}), also referred to as \emph{bounded geometry metric spaces}. A classical example is provided by a Cayley graph of a finitely generated group with the shortest path metric. The \emph{coarse category} $\mathbf{Coarse}$ is the category whose objects are uniformly locally finite metric spaces and whose morphisms are closeness classes of uniformly finite-to-one\footnote{ 		
	A map $f \colon X \to Y$ is \emph{uniformly finite-to-one} if there exists $N \ge 1$ such that the preimage of every point of $Y$ consists of at most $N$ elements (see Section~\ref{subsection: coarse geometry}). Coarse equivalences are automatically uniformly finite-to-one.}				
	coarse maps. Isomorphisms in $\mathbf{Coarse}$ are called \emph{coarse equivalences}. In the case of finitely generated groups equipped with word metrics, the notion of coarse equivalence coincides with quasi-isometry. We say that two uniformly locally finite metric spaces $(X,d)$ and $(Y,\partial)$ are \emph{(bijectively) coarsely equivalent} if there exists a (bijective) coarse equivalence between them.
	
	Coarse geometry is intrinsically connected to the theory of étale groupoids\footnote{								
			We refer the reader to~\cite{sims2017etale} for a detailed introduction to étale groupoids and their $C^*$-algebras.
						}.	
	An étale groupoid is a topological groupoid $\mathcal{G}$ with unit space $\mathcal{G}^{(0)}$ such that the source and range maps $s,r \colon \mathcal{G} \to \mathcal{G}^{(0)}$ are local homeomorphisms. An étale groupoid $\mathcal{G}$ is said to be \emph{ample} if its unit space $\mathcal{G}^{(0)}$ is totally disconnected. The theory of étale groupoids has attracted significant attention due to its deep connections with topological dynamics, number theory, and, in particular, operator algebras. To any étale groupoid $\mathcal{G}$ one can associate a $C^*$-algebra $C^*_r(\mathcal{G})$, called the \emph{reduced groupoid $C^*$-algebra} of $\mathcal{G}$. In~\cite{li2020every}, Li showed that every classifiable\footnote{							
		By a ``classifiable $C^*$-algebra'' we mean a unital, separable $C^*$-algebra with finite nuclear dimension that satisfies UCT.
	} 
	simple $C^*$-algebra arises as the reduced groupoid $C^*$-algebra of a topologically principal (possibly twisted) étale groupoid. In~\cite{crainic1999AHT}, Crainic and Moerdijk introduced a homology theory for étale groupoids, which is now a substantial tool for analysing the $K$-theory of the reduced groupoid $C^*$-algebras arising from ample groupoids. Precisely, in~\cite{matui2016etale} Matui showed that, for groupoids arising from shifts of finite type, the $K_0$-groups (respectively, $K_1$-groups) of the associated reduced groupoid $C^*$-algebras are isomorphic to the direct sum of the even (respectively, odd) homology groups. He conjectured that this phenomenon extends to more general ample groupoids.

	\begin{conjecture*}[HK conjecture]\label{conjecture: HK}
		Let $\mathcal{G}$ be a minimal, essentially principal, ample groupoid with compact unit space. For $* = 0,1$, there is an isomorphism
		\begin{equation*}
			K_*\bigl(C^*_r(\mathcal{G})\bigr) \cong \bigoplus_{i = 0}^{\infty} H_{2i + *}(\mathcal{G}; \mathbb{Z}),
		\end{equation*}
		where $H_{*}(\mathcal{G}; \mathbb{Z})$ are the Crainic and Moerdijk homology groups of $\mathcal{G}$.
	\end{conjecture*}

	\noindent The HK conjecture has been verified for many classes of ample groupoids, in some cases even beyond the stated hypotheses. However, counterexamples to the HK conjecture were constructed by Scarparo~\cite{scarparo2020homology} and Deeley~\cite{deeley2023counterexample}. It is well known that for any ample groupoid $\mathcal{G}$ and $i = 0,1$ there exists canonical group homomorphism
	$$
		\alpha_i \colon H_i(\mathcal{G}; \mathbb{Z}) \longrightarrow K_i\bigl(C^*_r(\mathcal{G})\bigr),
	$$
	called the \emph{$i$th comparison map} (see Definition~\ref{definition: comparison map} for the definition of $\alpha_0$ in the case of coarse groupoids). At present, only partial results are known concerning the existence and properties of comparison maps in higher degrees. 
	
	For a uniformly locally finite metric space $(X,d)$, the authors of \cite{skandalis2002coarse} constructed a Hausdorff, principal, ample groupoid $\mathcal{G}(X,d)$, called the \emph{coarse groupoid} of $(X,d)$, whose unit space is the Stone-Cech compactification of $X$. The construction of the coarse groupoid yields a functor from the coarse category to the category of étale groupoids with generalised groupoid homomorphisms. The authors of~\cite{bonicke2023dynamic} proved that the HK-conjecture holds for coarse groupoids associated with uniformly locally finite metric spaces of sufficiently small asymptotic dimension. Moreover, they showed that the groupoid homology of the coarse groupoid $\mathcal{G}(X,d)$ is canonically isomorphic to the uniformly finite homology (see Definition \ref{definition: uf-homology}) of $(X,d)$, which will be denoted by $H_i^{\mathrm{uf}}(X; \mathbb{Z})$. The reduced groupoid $C^*$-algebra of a coarse groupoid associated to a uniformly locally finite metric space $(X,d)$ is more commonly known as the \emph{uniform Roe algebra} $C^*_u(X)$. The latter has a simple description in terms of bounded operators on the Hilbert space $\ell^2(X)$. A bounded operator $T \in \mathcal{B}(\ell^2(X))$ is said to have \emph{controlled propagation} if the quantity
	\[
		\operatorname{prop}(T) = \sup \bigl\{ 
			d(x,y) \,\bigm|\, (x,y) \in X \times X  \text{ such that }\langle T\delta_x, \delta_y \rangle \neq 0 \bigr\}
	\]
	is finite. The \emph{uniform Roe algebra} $C^*_u(X)$ is defined as the norm closure in $\mathcal{B}(\ell^2(X))$ of the $*$-algebra of bounded operators with controlled propagation. If $G$ is a finitely generated discrete group equipped with a word metric, then the uniform Roe algebra of $G$ is canonically isomorphic to the reduced crossed product $\ell^\infty(G) \rtimes_r G$. We refer to \cite[Proposition 10.29]{roe2003lectures} for an explicit construction of the isomorphism between $C^*_u(X)$ and $C^*_r(\mathcal{G}(X,d))$.
	
	Uniform Roe algebras encode a substantial amount of coarse-geometric information about the underlying space. For example, a uniformly locally finite metric space $(X,d)$ has property~A if and only if $C^*_u(X)$ is a nuclear $C^*$-algebra \cite{skandalis2002coarse}, and $(X,d)$ is amenable (as a metric space) if and only if $C^*_u(X)$ admits a tracial state \cite{ara2018amenability}. This naturally leads to the question of whether the uniform Roe algebra fully determines the coarse geometry of the underlying space. The authors of~\cite{baudier2022uniform} showed that the Morita equivalence class of the uniform Roe algebra is a complete invariant of coarse equivalence.
	
	\begin{theorem}[Baudier, Braga, Farah, Khukhro, Vignati, Willett, 2021]\label{Intro: theorem: rigidity}
		Let $(X,d)$ and $(Y,\partial)$ be uniformly locally finite metric spaces. The $C^*$-algebras $C^*_u(X)$ and $C^*_u(Y)$ are Morita equivalent if and only if $(X,d)$ and $(Y,\partial)$ are coarsely equivalent.
	\end{theorem}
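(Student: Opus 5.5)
The easy direction, from a coarse equivalence to a Morita equivalence, goes through the coarse groupoids. Let $f \colon X \to Y$ be a coarse equivalence with coarse inverse $g$. My plan is to show that $\mathcal{G}(X,d)$ and $\mathcal{G}(Y,\partial)$ are groupoid equivalent and then invoke the Muhly--Renault--Williams theorem for the reduced algebras. A more hands-on alternative is also available. Set $Z = X \sqcup Y$ with a metric extending $d$ and $\partial$, where $d(x,f(x))$ is bounded and the distance between the two pieces is defined by
\[
\inf_{x'} \bigl(d(x,x') + 1 + \partial(f(x'),y)\bigr).
\]
This $Z$ is uniformly locally finite. Both $X$ and $Y$ are coarsely dense in $Z$, so the projections $\chi_X$ and $\chi_Y$ are full projections in $C^*_u(Z)$. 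Fullness holds because a partial isometry of bounded propagation, built from a uniformly finite-to-one map $Z \to X$ that is close to the identity, moves $\chi_X$ onto a projection dominating enough of $1$. Hence
\[
C^*_u(X) = \chi_X C^*_u(Z)\chi_X \quad\text{and}\quad C^*_u(Y) = \chi_Y C^*_u(Z)\chi_Y
\]
are Morita equivalent, being full corners of the same algebra.

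The hard direction is the converse: Morita equivalence implies coarse equivalence. The first step is to reduce to an isomorphism. Both algebras are unital, so Morita equivalence gives
\[
C^*_u(Y) \cong p\,M_n(C^*_u(X))\,p
\]
for some full projection $p$. Moreover $M_n(C^*_u(X)) \cong C^*_u(X \times \{1,\dots,n\})$, and $X \times \{1,\dots,n\}$ is coarsely equivalent to $X$. So it suffices to handle an isomorphism $\Phi \colon C^*_u(Y) \to p\,C^*_u(X')\,p$ onto a corner, where $X' = X \times \{1,\dots,n\}$. By the standard spatial implementation argument, such a map is implemented by an isometry $V \colon \ell^2(Y) \to \ell^2(X')$: the minimal projections $e_{yy}$ map to rank-one projections, and these are compatible because $\Phi$ preserves the compacts.

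The second step is to extract a map from $V$. For each $y$, I would choose $f(y) \in X'$ maximizing $|\langle V\delta_y, \delta_{x}\rangle|$, arranged so that this value is bounded below uniformly in $y$. Here lies the main obstacle. One needs uniform approximability: the images $\Phi(\chi_A)$ of the projections $\chi_A$, $A \subseteq Y$, must be uniformly $\varepsilon$-approximable by operators of a fixed propagation. I would prove this by contradiction, using a Baire category or infinite-sum argument on the family $\sum_{A} \lambda_A \chi_A$. This is the key lemma of Braga--Farah type, and with the uniformly locally finite hypothesis no property A should be needed.

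Uniform approximability then gives a $\delta>0$ such that every $V\delta_y$ has some coordinate of size at least $\delta$. It also forces $f$ to be coarse, since
\[
\Phi\bigl(e_{yy'}\bigr), \quad \partial(y,y') \le R,
\]
have uniformly bounded propagation up to $\varepsilon$. Finally, $f$ is uniformly finite-to-one because the vectors $V\delta_y$ are orthonormal: at most $1/\delta^2$ of them can have a coordinate of size at least $\delta$ at the same point.

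The third step applies the symmetric argument to $\Phi^{-1}$ (composed with the corner inclusion) to get $g$, and then checks that $g \circ f$ and $f \circ g$ are close to the identities. This uses the fact that the large coefficients of $V$ and $V^*$ pair up. The result is a coarse equivalence $Y \to X' \simeq X$.
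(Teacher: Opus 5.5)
\textbf{There is a genuine gap in Step 2.} The reductions around it are standard and fine: the easy direction via full corners of $C^*_u(X\sqcup Y)$, the passage from Morita equivalence to an isomorphism onto a corner $p\,C^*_u(X')\,p$, spatial implementation by an isometry, and the Braga--Farah uniform approximability lemma, which indeed needs no property~A. The gap is the sentence ``uniform approximability then gives a $\delta>0$ such that every $V\delta_y$ has some coordinate of size at least $\delta$''. This does not follow, and it is the real crux of the rigidity problem. You have located the main obstacle one step too early.

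\textbf{A counterexample to that inference.} Let $X=\bigsqcup_n X_n$ be a coarse disjoint union of expanders. Set $\xi_n=|X_n|^{-1/2}\mathbbm{1}_{X_n}$ and $P=\sum_n \xi_n\xi_n^*$. By the uniform spectral gap, $P$ is a continuous function of the Laplacian, so it lies in $C^*_u(X)$, and it is a non-compact ghost projection. Let $Y=\{y_n\}$ be sparse (e.g.\ $\{n^2\}\subset\mathbb{N}$), so that $C^*_u(Y)=\ell^\infty(Y)+\mathbb{K}(\ell^2(Y))$. The isometry $V\delta_{y_n}=\xi_n$ implements an isomorphism of $C^*_u(Y)$ onto the corner $P\,C^*_u(X)\,P$. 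One checks directly that $\operatorname{Ad}_V$ and $\operatorname{Ad}_{V^*}$ are both coarse-like; in particular every $\operatorname{Ad}_V(\mathbbm{1}_A)$, $A\subseteq Y$, is uniformly approximable. Yet every coordinate of $V\delta_{y_n}$ equals $|X_n|^{-1/2}\to 0$. Worse, $\|\mathbbm{1}_B V\delta_{y_n}\|\to 0$ uniformly over all $B$ of any fixed diameter, so replacing points by bounded blocks does not help either. The only difference from your situation is that $P$ is not full, and Step~2 never uses fullness. In general, if the bound failed along a sequence $y_n$, the natural contradiction argument only produces something like a non-compact ghost projection $\Phi(\mathbbm{1}_{\{y_n\}})$. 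That is a contradiction only under extra hypotheses such as property~A or compactness of ghost projections, which is exactly why the earlier rigidity results of \v{S}pakula--Willett and Braga--Farah assumed them.

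\textbf{What is missing.} You need an argument that genuinely uses surjectivity, i.e.\ fullness of $p$. It is convenient to replace the corner by the honest isomorphism $C^*_u(X)\otimes\mathbb{K}\cong C^*_u(Y)\otimes\mathbb{K}$ (Brown--Green--Rieffel). This is implemented by a unitary $U$, with both $U$ and $U^*$ coarse-like. One then shows that, for suitable $\delta,R,S$, the relation $f^U_{\delta,R,S}$ from the paper's sketch is coarsely everywhere defined in both directions and asymptotic to a graph. That relation is built from blocks $\|\mathbbm{1}_B U\mathbbm{1}_A\|>\delta$ with $\operatorname{diam}(B)\le R$ and $\operatorname{diam}(A)\le S$, not from single matrix coefficients. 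What one proves is only that every point lies within bounded distance of a large block, and $f$ is read off from this relation rather than from pointwise maxima.

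For genuine isomorphisms your pointwise bound does hold in the end. It follows from the Mart\'inez--Vigolo result quoted in the paper, that $U\otimes\operatorname{id}_H$ is a norm limit of operators supported near the graph of $f_\Phi$, together with uniform local finiteness. But it is an output of the theory, not an input.

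\textbf{Step 3 has the same problem.} ``Apply the argument to $\Phi^{-1}$ composed with the corner inclusion'' does not define a map out of $C^*_u(X')$. Coarse surjectivity of $f$ is precisely where fullness of $p$ has to enter.
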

	
	\noindent An analogous statement for isomorphisms of uniform Roe algebras and bijective coarse equivalences, known as ``the bijective rigidity problem'', was a long-standing conjecture in coarse geometry. It was recently resolved by Vignati in \cite{avignati}.
	
	\begin{theorem}[Vignati, 2026] \label{Intro: theorem: bij.rigidity}
		Let $(X,d)$ and $(Y,\partial)$ be uniformly locally finite metric spaces. The $C^*$-algebras $C^*_u(X)$ and $C^*_u(Y)$ are isomorphic if and only if $(X,d)$ and $(Y,\partial)$ are bijectively coarsely equivalent.
	\end{theorem}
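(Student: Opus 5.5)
The plan is to deduce the statement from Theorem~\ref{Intro: theorem: rigidity} combined with a homological criterion for a coarse equivalence to be close to a bijection. That criterion is Whyte's theorem on uniformly finite homology: a coarse equivalence $f\colon X\to Y$ is close to a bijection if and only if $f_*[X]=[Y]$ in $H_0^{\mathrm{uf}}(Y;\mathbb{Z})$, where $[X]$ denotes the fundamental class, i.e. the class of the constant chain $\sum_{x}1\cdot x$. The bridge between $K$-theory and uniformly finite homology is the $0$th comparison map $\alpha_0\colon H_0^{\mathrm{uf}}(X;\mathbb{Z})\to K_0(C^*_u(X))$, which sends $[X]$ to $[1_{C^*_u(X)}]$.

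\emph{The easy direction.} Suppose $f\colon X\to Y$ is a bijective coarse equivalence. Then the permutation unitary $U\colon\ell^2(X)\to\ell^2(Y)$, $\delta_x\mapsto\delta_{f(x)}$, satisfies $\operatorname{prop}(UTU^*)<\infty$ whenever $\operatorname{prop}(T)<\infty$. This uses that $f$ is coarse, and the same holds for $U^*$ because $f^{-1}$ is coarse. Hence $\operatorname{Ad}U$ restricts to an isomorphism $C^*_u(X)\cong C^*_u(Y)$.

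\emph{The hard direction.} Let $\Phi\colon C^*_u(X)\to C^*_u(Y)$ be an isomorphism.
\begin{enumerate}
\item By the known rigidity results (the argument behind Theorem~\ref{Intro: theorem: rigidity}), $\Phi$ is implemented by a unitary $U\colon\ell^2(X)\to\ell^2(Y)$.
\item From $U$ one extracts a coarse equivalence $f\colon X\to Y$ with the following property: $\delta_{f(x)}$ is chosen among the points where $|\langle U\delta_x,\delta_y\rangle|$ is bounded below uniformly.
\item Next one checks naturality of the comparison map, namely
\[
K_0(\Phi)\circ\alpha_0^X=\alpha_0^Y\circ f_*
\]
on $H_0^{\mathrm{uf}}(X;\mathbb{Z})$. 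It suffices to verify this on classes of projections $\chi_A$ with $A\subseteq X$. Here $U\chi_AU^*$ is approximately supported near $f(A)$, so the two projections should be Murray--von Neumann equivalent after a small perturbation, using that $f$ is uniformly finite-to-one.
\item Since $\Phi$ is unital, $K_0(\Phi)[1_X]=[1_Y]$. Combining this with step~3 gives
\[
\alpha_0^Y\bigl(f_*[X]\bigr)=\alpha_0^Y\bigl([Y]\bigr).
\]
\item Therefore, once $\alpha_0^Y$ is known to be injective, $f_*[X]=[Y]$, and Whyte's criterion makes $f$ close to a bijection. Thus $X$ and $Y$ are bijectively coarsely equivalent.
\end{enumerate}

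The main obstacle is the injectivity of $\alpha_0$, which is exactly the equivalence advertised in the abstract. My approach is to describe $K_0(C^*_u(X))$ using projections in matrices over $\ell^\infty(X)$ modulo equivalence implemented by partial isometries of finite propagation. I would then construct a left inverse on the image of $\alpha_0$ (splittable in the coarsely connected case) as follows. Given a projection $p\in M_n(C^*_u(X))$, approximate it by a finite-propagation element. Then read off a ``local rank'' function $x\mapsto$ (rank of $p$ near $x$), defined via traces of $p$ compressed to bounded blocks, and map it to a bounded integer chain up to boundaries.

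The delicate point is showing that this local rank is well defined up to a uniformly finite boundary. A partial isometry of propagation $R$ moves rank only across distance $R$ with uniformly bounded multiplicity, and I expect this is precisely a $1$-chain whose boundary is the difference of the two rank functions. I would try this first via a Hall-marriage/flow argument, as in Whyte's proof. Step~3 (naturality) I expect to be routine, if technical.
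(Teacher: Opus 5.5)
\textbf{Verdict: there is a genuine gap.} Your proof is complete only if you prove injectivity of $\alpha_0$ independently, and your sketch for that does not work.

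\textbf{What the reduction achieves.} The easy direction is fine. Steps 1--5 are exactly the paper's proof of (1)$\Rightarrow$(2) in Lemma~A: spatial implementation, the coarse equivalence $f_\Phi$, the square $K_0(\Phi)\circ\alpha_0^X=\alpha_0^Y\circ H_0^{\mathrm{uf}}(f_\Phi)$, the unit class, and Whyte's criterion. But the paper also proves (2)$\Rightarrow$(1). So over an admissible class, injectivity of $\alpha_0$ is \emph{equivalent} to bijective rigidity, and your reduction moves the whole content of the statement into your ``main obstacle''. The paper does not prove this theorem at all. It quotes it from Vignati and uses it to \emph{deduce} injectivity of $\alpha_0$ (Theorem~B), so its logic runs the other way.

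Step 3 is also not routine. The paper needs uniform covers plus the external result that $V\otimes\operatorname{id}_H$ is a norm limit of unitaries supported near $\operatorname{Graph}(f_\Phi)$. Moreover, when $f$ is not injective, the correct comparison target for $U\mathbbm{1}_AU^*$ is a diagonal projection with multiplicities $f_*\mathbbm{1}_A$, not $\mathbbm{1}_{f(A)}$.

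\textbf{Where the injectivity sketch fails.}
\begin{enumerate}
\item Your model of $K_0(C^*_u(X))$ as ``diagonal projections modulo finite-propagation equivalence'' is just $H_0^{\mathrm{uf}}(X;\mathbb{Z})$ mapped in by $\alpha_0$. Adopting it assumes that $\alpha_0$ is bijective. Surjectivity even fails for $\mathbb{Z}^2$, where $H_2^{\mathrm{uf}}$ contributes to $K_0$.
\item The Hall/flow step only treats partial isometries of finite propagation. Murray--von Neumann equivalences in $C^*_u(X)$ are implemented by partial isometries $v$ that are merely norm limits of such. Write $\tau(p)(x)=\operatorname{tr}(\mathbbm{1}_xp\mathbbm{1}_x)$ for the diagonal trace. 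The natural flow $c(y,x)=\|\mathbbm{1}_yv\mathbbm{1}_x\|_{\mathrm{HS}}^2$ satisfies $\partial_1c=\tau(v^*v)-\tau(vv^*)$, but it has unbounded propagation with only uniformly small tails. Truncating at propagation $R$ therefore only shows that the difference lies in the $\ell^\infty$-closure of $\operatorname{im}\partial_1$. For amenable spaces this closure is strictly larger than $\operatorname{im}\partial_1$: on $\mathbb{Z}$, the function $\mathbbm{1}_{\{n^2\}}$ has interval sums that are $o(L)$ but unbounded.
\item In fact the local-trace invariant is not well defined on $K_0$, even for $X=\mathbb{Z}$ and even into $H_0^{\mathrm{uf}}(\mathbb{Z};\mathbb{R})$.
\begin{itemize}
\item Let $u\in C^*_u(\mathbb{Z})$ act by rotations through $\theta_k=1/k$ on pairwise disjoint pairs $\{x,x+2^k\}$ with $x\in A_k$, and trivially elsewhere. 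Then $u$ is the norm limit of its finite-propagation truncations, since $\theta_k\to0$.
\item Put $A=\bigcup_kA_k$. Since $u$ is a unitary in $C^*_u(\mathbb{Z})$, we have $[u\mathbbm{1}_Au^*]=[\mathbbm{1}_A]$. However, $\mathbbm{1}_A-\tau(u\mathbbm{1}_Au^*)=\sum_k\sin^2\theta_k\sum_{x\in A_k}(\delta_x-\delta_{x+2^k})$.
\item For each $K$, in its own window placed far from the others, choose roughly $2^k/k^2$ pairs of each scale $k\le K$ straddling a common point $b$. The sum of this difference over $[b-2^K,b)$ is then about $\sum_{k\le K}2^k/k^4\to\infty$.
\item On $\mathbb{Z}$, every boundary of a bounded controlled chain has bounded interval sums. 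So this difference is not a boundary even over $\mathbb{R}$.
\end{itemize}
\end{enumerate}

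\textbf{The missing idea.} Any successful argument must use the integrality of \emph{both} diagonal projections essentially, upgrading an approximable equivalence to a controlled one. That upgrade is precisely the bijective rigidity problem, and your proposal contains no idea for it.
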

	
	\vspace{0.5cm}
	
	The $K$-theory of the uniform Roe algebra is of particular importance both from a mathematical perspective and for applications in physics. In~\cite{vspakula2009uniform}, \v{S}pakula introduced the \emph{uniform coarse Baum--Connes conjecture}, which asks whether a certain assembly map from the uniform $K$-homology of a metric space to the $K$-theory of the associated uniform Roe algebra is an isomorphism. On the other hand, the Baum--Connes conjecture for ample groupoids yields another assembly map, from the topological $K$-theory of the coarse groupoid to the $K$-theory of the associated uniform Roe algebra, which was conjectured\footnote{
	Counterexamples to surjectivity were constructed in~\cite{higson2002counterexamples}.
	}
	to be an isomorphism. It was shown in~\cite{engel2019uniform} that, when the uniformly locally finite metric space arises from a finitely generated group, the two conjectures are equivalent. In~\cite{kubota2017controlled}, Kubota used uniform Roe algebras as models for controlled topological phases of bulk and edge quantum systems. In particular, the $K$-theory of the uniform Roe algebra plays a decisive role in the study of the so-called \emph{bulk index}.
	
	\subsection*{Main results}

	In this paper, we establish a connection between the bijective rigidity problem and the injectivity of the $0$th comparison map appearing in the HK~conjecture for coarse groupoids.

	We say that a collection $\mathcal{C}$ of metric spaces is \emph{admissible} (see Definition~\ref{definition: admissible class}) if it is stable under taking subspaces and doublings\footnote{		
	On the level of coarse groupoids, a doubling corresponds to taking the product with $\mathcal{R}_n$, where $\mathcal{R}_n$ denotes the discrete groupoid associated to the full equivalence relation on $n$ points. See Section~\ref{subsection: coarse geometry} for the coarse geometric definition.					
	}. For a property $\mathrm{P}$ of metric spaces, we say that an admissible collection $\mathcal{C}$ satisfies $\mathrm{P}$ if every element of $\mathcal{C}$ satisfies~$\mathrm{P}$. For example, since property~A passes to subspaces and doublings, the collection of metric spaces satisfying property~A forms an admissible collection. The collection of all uniformly locally finite metric spaces, clearly, forms an admissible collection. The following lemma constitutes the main result of this paper.
	
	\begin{llemma}\label{Intro: lemma: A}
		Let $\mathcal{C}$ be an admissible collection of uniformly locally finite metric spaces. The following statements are equivalent:
		\begin{enumerate}
			\item For every uniformly locally finite metric space $(X,d)$ in $\mathcal{C}$, the $0$th comparison map $\alpha_0 \colon H^{\mathrm{uf}}_0(X; \mathbb{Z}) \to K_0\bigl(C^*_u(X)\bigr)$ is injective;
			\item For any uniformly locally finite metric spaces $(X,d)$ and $(Y,\partial)$ in $\mathcal{C}$ such that $C^*_u(Y) \cong C^*_u(X)$, the coarse equivalence given by Theorem~\ref{Intro: theorem: rigidity} is close to a bijective coarse equivalence.
		\end{enumerate}
	\end{llemma}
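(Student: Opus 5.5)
The proof will run through two standard inputs. The first is Whyte's criterion: a coarse equivalence $f\colon X\to Y$ of uniformly locally finite spaces is close to a bijection if and only if $f_*[X]=[Y]$ in $H_0^{\mathrm{uf}}(Y;\mathbb{Z})$, where $[X]$ is the class of the constant function $1$. The second is the description of $\alpha_0$ on $0$-chains. A bounded function $g\colon X\to\mathbb{N}$ gives the diagonal projection $p_g$ in $C^*_u(X)\otimes M_N$, with rank $g(x)$ at $x$, and $\alpha_0([g])=[p_g]$. In particular $\alpha_0([X])=[1_{C^*_u(X)}]$. I also use the naturality of $\alpha_0$ with respect to uniformly finite-to-one coarse maps. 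On $K$-theory such a map $f$ acts through the isometry $V_f\colon \ell^2(X)\to\ell^2(Y)\otimes\mathbb{C}^N$, $\delta_x\mapsto \delta_{f(x)}\otimes e_{i(x)}$, where $i(x)$ enumerates the fibre of $f$ containing $x$. The image of $V_f$ sits inside the uniform Roe algebra of the doubling $Y\times\{1,\dots,N\}$, and this is where admissibility of $\mathcal C$ enters.

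\textbf{(1)$\Rightarrow$(2).}
\begin{enumerate}
\item Let $\Phi\colon C^*_u(X)\to C^*_u(Y)$ be an isomorphism and let $f$ be the coarse equivalence produced by Theorem~\ref{Intro: theorem: rigidity}.
\item From the construction in that proof, $\Phi$ is spatially implemented by a unitary that is approximately supported near the graph of $f$. I would use this to show that $K_0(\Phi)$ agrees with $f_*=\mathrm{Ad}(V_f)_*$ on $K_0$. I expect this identification to be the main technical step, since it requires opening up the rigidity argument rather than quoting its statement.
\item Granting it, naturality gives
\[
\alpha_0\bigl(f_*[X]\bigr)=f_*[1_X]=[\Phi(1_X)]=[1_Y]=\alpha_0([Y]).
\]
\item Injectivity of $\alpha_0$ for $Y$ then gives $f_*[X]=[Y]$. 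If the computation naturally lands in a doubling of $Y$, I instead use injectivity of $\alpha_0$ for that doubling, which lies in $\mathcal C$.
\item Whyte's criterion now says $f$ is close to a bijective coarse equivalence.
\end{enumerate}

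\textbf{(2)$\Rightarrow$(1).}
\begin{enumerate}
\item Suppose $c\in H^{\mathrm{uf}}_0(X;\mathbb{Z})$ satisfies $\alpha_0(c)=0$. Adding $m[X]$ for large $m$, I represent $c+m[X]$ by a bounded function $g\colon X\to\mathbb{N}_{\ge1}$. Then $\alpha_0([g])=m[1_X]=\alpha_0(m[X])$.
\item Choose $N\ge\max g$. Inside the doubling $X_N=X\times\{1,\dots,N\}$ set $Y_g=\{(x,i): i\le g(x)\}$ and $X_m=X\times\{1,\dots,m\}$. Both are subspaces of doublings, so they lie in $\mathcal C$.
\item We have $[p_g]=[p_m]$ in $K_0(C^*_u(X_N))$, where $p_g=1_{Y_g}$ and $p_m=1_{X_m}$. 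A priori this is only stable equivalence: $p_g\oplus 1_k\sim p_m\oplus 1_k$. Replacing $g$ by $g+k$ and $m$ by $m+k$ (and enlarging $N$), I may assume $p_g$ and $p_m$ are Murray--von Neumann equivalent in $C^*_u(X_N)$.
\item Hence the corners are isomorphic:
\[
C^*_u(Y_g)=p_gC^*_u(X_N)p_g\cong p_mC^*_u(X_N)p_m=C^*_u(X_m).
\]
\item Apply (2). The coarse equivalence from Theorem~\ref{Intro: theorem: rigidity} is close to the natural one, namely the map $Y_g\to X_m$ through the projections to $X$, because both equivalences are coarse inverses of the inclusions up to closeness. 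So by (2) this map is close to a bijection.
\item Pushing forward to $X$ and applying Whyte's criterion gives $[g]=m[X]$ in $H_0^{\mathrm{uf}}(X;\mathbb{Z})$. Therefore $c=0$, and $\alpha_0$ is injective.
\end{enumerate}

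Besides the $K_0(\Phi)=f_*$ identification in (1)$\Rightarrow$(2), the other delicate point is step 3 of (2)$\Rightarrow$(1): upgrading equality in $K_0$ to actual Murray--von Neumann equivalence. Adding copies of the unit is exactly what makes that step work.
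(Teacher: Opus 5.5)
Your architecture matches the paper's proof. (1)$\Rightarrow$(2) is the same argument. The paper proves the identification you defer by taking a uniform cover $U$ of $f_\Phi$ and showing that $U^*(V\otimes\operatorname{id}_H)$ is an approximable multiplier of $C^*_u(X)\otimes\mathbb{K}$, hence acts trivially on $K$-theory. This uses that $V\otimes\operatorname{id}_H$, not merely $V$, is a norm limit of unitaries supported near the graph of $f_\Phi$. Steps 1--4 of (2)$\Rightarrow$(1) reproduce the paper's reduction to an isomorphism $\Phi=\operatorname{Ad}_v\colon C^*_u(Y_g)\to C^*_u(X_m)$. Here $v\in C^*_u(X_N)$ is a partial isometry with $v^*v=\mathbbm{1}_{Y_g}$ and $vv^*=\mathbbm{1}_{X_m}$.

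The gap is step 5. The rigidity theorem only hands you \emph{some} coarse equivalence $f_\Phi\colon Y_g\to X_m$, and nothing in its statement relates it to the inclusions into $X_N$. So the claim that ``both equivalences are coarse inverses of the inclusions up to closeness'' assumes what has to be proved. The step cannot be skipped. If the bijection supplied by (2) were not close to the natural map, pushing forward to $X$ would only give $m[X]=\psi_*[g]$ for some coarse self-equivalence $\psi$ of $X$. Such a $\psi$ need not preserve the fundamental class, so you could not conclude $c=0$.

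The missing idea is to use where the implementing operator lives. The restriction of $v$ is a unitary $\ell^2(Y_g)\to\ell^2(X_m)$ implementing $\Phi$. Implementing unitaries are unique up to a scalar, since uniform Roe algebras contain the compacts. Hence, up to closeness, $f_\Phi$ is the union of the products $C\times D$ with $\operatorname{diam}C\le R$, $\operatorname{diam}D\le P$ and $\|\mathbbm{1}_C v\mathbbm{1}_D\|>\delta$. Because $v\in C^*_u(X_N)$ is approximable, choose $t$ of finite propagation in $X_N$ with $\|v-t\|<\delta$. Then $\|\mathbbm{1}_C t\mathbbm{1}_D\|\neq 0$ for every such pair, so each $C\times D$ lies within distance $R+P+\operatorname{prop}(t)$ of the diagonal of $X_N$. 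Thus $f_\Phi$, and any bijection close to it, moves points of $X_N$ a bounded distance. That bijection is a partial translation of $X_N$ from $Y_g$ onto $X_m$, which gives $[\mathbbm{1}_{Y_g}]=[\mathbbm{1}_{X_m}]$ directly. It also makes your step 6 valid.
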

	
	By Theorem \ref{Intro: theorem: bij.rigidity}, the collection of all uniformly locally finite metric spaces satisfies the second condition of Lemma \ref{Intro: lemma: A}; therefore, the first assertion holds unconditionally. Nevertheless, we formulate Lemma~\ref{Intro: lemma: A} as establishing a relationship between the two conjectural statements, as this viewpoint clarifies the techniques underlying our arguments. Moreover, we expect that the methods developed here can be adapted to broader classes of ample groupoids, potentially yielding new results on the injectivity of comparison maps and on rigidity phenomena for ample groupoids.
	
	\begin{ltheorem} \label{Intro: theorem: B}
		Let $(X,d)$ be a uniformly locally finite metric space, and $\mathcal{G}$ be its coarse groupoid. The comparison map $\alpha_0 \colon H_0(\mathcal{G}; \mathbb{Z}) \to K_0(C^*_r(\mathcal{G}))$ is injective. Moreover, if $(X,d)$ is coarsely connected, then $\alpha_0$ is split injective.
	\end{ltheorem}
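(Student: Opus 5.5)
Since the collection of all uniformly locally finite metric spaces is admissible and, by Theorem~\ref{Intro: theorem: bij.rigidity}, satisfies condition~(2) of Lemma~\ref{Intro: lemma: A}, injectivity of $\alpha_0$ follows at once from the implication (2)$\Rightarrow$(1) of that lemma, together with the identification $H_0(\mathcal{G};\mathbb{Z})\cong H_0^{\mathrm{uf}}(X;\mathbb{Z})$ of \cite{bonicke2023dynamic}. So the work lies in proving (2)$\Rightarrow$(1), and then in the splitting. For (2)$\Rightarrow$(1) I would argue contrapositively. Recall that $H_0^{\mathrm{uf}}(X;\mathbb{Z})$ is the quotient of bounded functions $X\to\mathbb{Z}$ by boundaries, and that $\alpha_0$ sends a nonnegative bounded $\phi$ to the class of the diagonal projection onto $\bigsqcup_x\{x\}\times\{1,\dots,\phi(x)\}$ inside $C^*_u$ of a doubling. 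Suppose $\alpha_0[\phi]=\alpha_0[\psi]$. Then realise $\phi$ and $\psi$ as subspaces $X_\phi, X_\psi$ of a doubling $X\times\{1,\dots,N\}$; both lie in $\mathcal{C}$ by admissibility. Equality in $K_0$ gives Murray--von Neumann equivalence after stabilising by a further doubling, so that $C^*_u(X_\phi)=p\,C^*_u(X\times N)\,p\cong q\,C^*_u(X\times N)\,q=C^*_u(X_\psi)$.

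Next I would check that the coarse equivalence supplied by Theorem~\ref{Intro: theorem: rigidity} is close to the map induced by the "identity" on $X$, namely the fibrewise map $X_\phi\to X\to X_\psi$. Condition~(2) then gives a bijective coarse equivalence close to this map. A bijection within bounded distance of the projection to $X$ produces, via a standard flow/matching argument on the finite-propagation bijection, a uniformly finite $1$-chain whose boundary is $\phi-\psi$. Hence $[\phi]=[\psi]$ in $H_0^{\mathrm{uf}}$. The main obstacle is this identification of the rigidity equivalence with the identity-type map; I would handle it by tracking the rank-one projections $e_{xx}$ through the isomorphism, showing they are moved by a bounded amount. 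This uses that the isomorphism is implemented by a partial isometry $v\in C^*_u(X\times N)$, which is approximately of finite propagation.

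For split injectivity when $X$ is coarsely connected, I would construct a left inverse $K_0(C^*_u(X))\to H_0^{\mathrm{uf}}(X)$. The plan is to send a projection $p\in M_n(C^*_u(X))$ to a diagonal-type function. Approximate $p$ by a finite-propagation element, take its "local trace" function $x\mapsto \operatorname{tr}(p_{xx})$, and argue that it is within a boundary of an integer-valued function. Coarse connectedness is what makes every bounded function with zero mean-type constraints a boundary; via the known description of $H_0^{\mathrm{uf}}$ through means (Block--Weinberger), this lets rounding errors be absorbed consistently. I would then check that the map is well defined on $K_0$ and that it is the identity on $\alpha_0$ images, since for diagonal projections the local trace is exactly $\phi$. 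I expect well-definedness of this rounding, independent of choices, to be the delicate step.
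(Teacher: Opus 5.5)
\textbf{Injectivity.} Your argument for injectivity is the paper's. You apply (2)$\Rightarrow$(1) of Lemma~\ref{Intro: lemma: A} to the class of all uniformly locally finite spaces, using Vignati's theorem. Strictly, condition~(2) needs that his bijection is close to $f_\Phi$, which the paper notes, not merely that some bijection exists. Your sketch of (2)$\Rightarrow$(1) matches the paper's proof: doubling, stable Murray--von Neumann equivalence, $f_\Phi$ lying in an entourage because the implementing partial isometry is approximable, and a bounded-displacement bijection being a partial translation.

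\textbf{The splitting has a genuine gap.} The proposed local-trace left inverse is not well defined, and this is not a matter of delicate bookkeeping. Write $\tau(p)(x)=\operatorname{tr}(p_{xx})$. If $v$ has finite propagation, then $\tau(v^*v)-\tau(vv^*)=\partial_1 c$ for the controlled chain $c(y,x)=\|\mathbbm{1}_y v\mathbbm{1}_x\|_{\mathrm{HS}}^2$. For merely approximable $v$ this chain is not controlled. Since uniformly finite boundaries are not norm-closed in $\ell^\infty(X,\mathbb{R})$, no approximation argument repairs this.

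Concretely, take $X=\mathbb{Z}$ and $p=\mathbbm{1}_{\mathbb{Z}_{\ge 0}}$. A boundary on $\mathbb{Z}$ has uniformly bounded sums over intervals.
\begin{itemize}
\item The propagation-zero approximant $(1-\eta)p$ has class $(1-\eta)[\mathbbm{1}_{\mathbb{Z}_{\ge0}}]\neq[\mathbbm{1}_{\mathbb{Z}_{\ge0}}]$ in $H_0^{\mathrm{uf}}(\mathbb{Z};\mathbb{R})$, because $\eta\mathbbm{1}_{\mathbb{Z}_{\ge0}}$ has unbounded sums over $[0,N]$. So the class depends on the approximant.
\item Exact projections also fail. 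Let $\langle u\delta_a,\delta_x\rangle=\hat g(x-a)$ for a continuous $g\colon\mathbb{T}\to\mathbb{T}$ with $\sum_{k>0}k|\hat g(k)|^2=\infty$. For example, take $g=e^{ih}$ with $h=\varepsilon\sum_j 2^{-j/2}\cos(2^j\theta)$, replacing $g$ by $\bar g$ if needed. Then $u\in C^*_r(\mathbb{Z})\subset C^*_u(\mathbb{Z})$ and $q=upu^*$ satisfies $[q]=[p]$ in $K_0$. However, $\sum_{x=0}^N(\tau(q)-\tau(p))(x)=-\sum_{k>0}\min(k,N+1)|\hat g(k)|^2\to-\infty$, so $[\tau(q)]\neq[\tau(p)]$ in $H_0^{\mathrm{uf}}(\mathbb{Z};\mathbb{R})$.
\item Pointwise rounding cannot help. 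Local traces of projections need not be near integers: the projection onto $\overline{\operatorname{span}}\{\delta_{2k}+\delta_{2k+1}\}$ has $\tau\equiv 1/2$.
\end{itemize}

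You have also misidentified the role of coarse connectedness. It is false that bounded functions annihilated by all invariant means are boundaries. The indicator function of $\{n^2\mid n\in\mathbb{N}\}\subset\mathbb{Z}$ is killed by every invariant mean, yet its sums over $[0,N]$ grow like $\sqrt N$. Means only see a Hausdorff quotient of $H_0^{\mathrm{uf}}$, so they cannot produce a left inverse of $\alpha_0$.

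\textbf{The missing step.} What coarse connectedness actually gives, by Whyte's theorem, is $H_0^{\mathrm{uf}}(X;\mathbb{Z})\cong H_0^{\mathrm{uf}}(X;\mathbb{R})$ for infinite $X$. This is a real vector space, hence divisible, hence an injective abelian group. Therefore the injective map $\alpha_0$ splits abstractly, by Baer's criterion; this is the paper's whole argument. For finite $X$, $\alpha_0$ is already an isomorphism $\mathbb{Z}\to\mathbb{Z}$. No explicit left inverse is needed, and the explicit one you propose is exactly what breaks.
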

	
	\noindent The above theorem partially extends the results of \cite{bonicke2023dynamic}, where the authors proved that $\alpha_0$ is split injective for uniformly locally finite metric spaces of small asymptotic dimension. To the best of our knowledge, no further results concerning the (split) injectivity of $\alpha_0$ for uniform Roe algebras are currently available. There exist many examples of uniformly locally finite metric spaces with infinite asymptotic dimension that are not coarsely connected, such as expander graphs. By contrast, any finitely generated group equipped with a word-length metric is coarsely connected. In this setting, for a finitely generated group $G$, the uniformly finite homology of $G$ is canonically isomorphic to the group homology of $G$ with coefficients in $\ell^{\infty}(G,\mathbb{R})$ (see \cite[Proposition A.10]{diana2017}).
	
	\begin{lcorollary} \label{Intro: corollary: C}
		Let $G$ be a finitely generated group equipped with the word-length metric. The canonical map
		\[
			\alpha_0^G \colon H_0\bigl(G; \ell^{\infty}(G,\mathbb{R})\bigr)
				\longrightarrow
			K_0\bigl(\ell^{\infty}(G) \rtimes_r G\bigr)
		\]
		is split-injective.
	\end{lcorollary}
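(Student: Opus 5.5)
Corollary~\ref{Intro: corollary: C} will be deduced directly from Theorem~\ref{Intro: theorem: B}. The plan is to identify source and target of $\alpha_0^G$ with those of the comparison map $\alpha_0$ for the coarse groupoid $\mathcal{G}(G,d)$, check that $\alpha_0^G$ corresponds to $\alpha_0$ under these identifications, and then invoke the split-injective half of Theorem~\ref{Intro: theorem: B}.

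\textbf{Step 1: the space is coarsely connected.} Fix a finite symmetric generating set $S$ and the word metric $d_S$. Then $(G,d_S)$ is uniformly locally finite, since balls of radius $R$ have at most $|S|^{R+1}$ elements, uniformly in the centre. It is also coarsely connected: any two elements are joined by a path in the Cayley graph whose consecutive points are at distance $1$.

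\textbf{Step 2: identifying the target.} By \cite[Proposition 10.29]{roe2003lectures}, as recalled in the introduction, there are canonical isomorphisms
\[
C^*_r(\mathcal{G}(G,d_S)) \cong C^*_u(G) \cong \ell^\infty(G)\rtimes_r G.
\]
These induce an isomorphism on $K_0$.

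\textbf{Step 3: identifying the source.} By \cite{bonicke2023dynamic}, $H_0(\mathcal{G};\mathbb{Z}) \cong H_0^{\mathrm{uf}}(G;\mathbb{Z})$. By \cite[Proposition A.10]{diana2017}, $H^{\mathrm{uf}}_0(G;\mathbb{Z})$ is canonically isomorphic to the group homology $H_0(G;\ell^\infty(G,\mathbb{R}))$ appearing in the statement; the coefficient module is to be read as in that reference, i.e.\ the bounded integer-valued functions on which uniformly finite chains with $\mathbb{Z}$ coefficients are modelled.

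\textbf{Step 4: compatibility of the maps.} I interpret $\alpha_0^G$ as the canonical map defined by composing $\alpha_0$ with the isomorphisms of Steps 2 and 3. If the paper instead defines $\alpha_0^G$ independently, I would check on generators that the square commutes. A class in $H_0$ is represented by a bounded function $f\colon G\to\mathbb{Z}_{\ge 0}$. On the groupoid side this is the class of the indicator of the corresponding clopen subset of $\mathcal{G}^{(0)}\times\mathbb{N}$, sent to the $K_0$-class of a diagonal projection of rank $f(x)$ at each point $x$. On the crossed product side it is sent to the same projection in $M_n(\ell^\infty(G))\subset M_n(\ell^\infty(G)\rtimes_r G)$. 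These agree under the isomorphism of Step 2, which restricts to the identity on $\ell^\infty(G)$.

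\textbf{Step 5: conclusion.} Since $(G,d_S)$ is coarsely connected, Theorem~\ref{Intro: theorem: B} says $\alpha_0$ is split injective. Conjugating by the isomorphisms of Steps 2 and 3 carries a left inverse of $\alpha_0$ to a left inverse of $\alpha_0^G$, so $\alpha_0^G$ is split injective.

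The only step that needs care is the naturality check in Step 4, namely that the three identifications are compatible on the generators of $H_0$. This amounts to unwinding definitions, and the source identification in Step 3 is quoted from \cite[Proposition A.10]{diana2017} rather than reproved.
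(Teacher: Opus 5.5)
Your route is the paper's: coarse connectedness of the Cayley graph, the identifications $C^*_r(\mathcal{G})\cong C^*_u(G)\cong \ell^\infty(G)\rtimes_r G$ and of uniformly finite homology with group homology, then the split-injective half of Theorem~\ref{Intro: theorem: B}. The gap is in Step~3. Theorem~\ref{Intro: theorem: B} concerns $H_0(\mathcal{G};\mathbb{Z})\cong H_0^{\mathrm{uf}}(G;\mathbb{Z})$, whereas the statement has coefficients $\ell^\infty(G,\mathbb{R})$. The group $H_0(G;\ell^\infty(G,\mathbb{R}))$ is $H_0^{\mathrm{uf}}(G;\mathbb{R})$, not $H_0^{\mathrm{uf}}(G;\mathbb{Z})$: in degree $0$ both are $\ell^\infty(G,\mathbb{R})$ modulo the span of the $g\cdot\varphi-\varphi$. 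Declaring that the coefficient module is to be ``read as'' integer-valued functions changes the statement instead of proving it. Moreover, a real-valued bounded function does not define a $K_0$-class. So without a bridge between $\mathbb{Z}$ and $\mathbb{R}$ coefficients, the map $\alpha_0^G$ is not even defined.

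The missing ingredient is Whyte's theorem, Theorem~\ref{theorem: injectivity/isomorphism of extension of scalars}: for an infinite, coarsely connected, uniformly locally finite space, extension of scalars $H_0^{\mathrm{uf}}(X;\mathbb{Z})\to H_0^{\mathrm{uf}}(X;\mathbb{R})$ is an isomorphism. Coarse connectedness is therefore needed to identify the source itself, not only to invoke Theorem~\ref{Intro: theorem: B}. It is the same isomorphism that makes the domain a real vector space, which is how the paper obtains the splitting. With it, $\alpha_0^G$ is $\alpha_0$ precomposed with the inverse of $H_0^{\mathrm{uf}}(G;\mathbb{Z})\cong H_0^{\mathrm{uf}}(G;\mathbb{R})\cong H_0(G;\ell^\infty(G,\mathbb{R}))$, and your Steps~4 and~5 then go through unchanged. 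This step also requires $G$ to be infinite. For the trivial group the source is $\mathbb{R}$ and the target is $K_0(\mathbb{C})\cong\mathbb{Z}$, so the real-coefficient statement tacitly excludes finite groups. Your integer reading would survive there, but it is a different map.
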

	
	Recent results of~\cite{proietti2025cherncharactertorsionfreeample} show that the rational HK conjecture holds for all second-countable, locally compact, Hausdorff, ample groupoids that satisfy the rational Baum--Connes conjecture, and have torsion-free stabilisers. Coarse groupoids fall outside this framework: they are not second-countable and do not in general satisfy the Baum--Connes conjecture (see~\cite{higson2002counterexamples}). Nevertheless, by tensoring with $\mathbb{Q}$ in Theorem~\ref{Intro: theorem: B}, we immediately obtain unconditional split injectivity of the rational comparison map (see Remark~\ref{remark on rational injectivity}).
	
	\subsection*{Outline of the paper}

		In Section~\ref{section: preliminaries}, we introduce the necessary background from coarse geometry, uniform Roe algebras, and uniformly finite homology. We briefly recall the construction of the coarse groupoid from~\cite{skandalis2002coarse}, sketch the main ideas underlying the proof of Theorem~\ref{Intro: theorem: rigidity}, review the results on uniformly finite homology, and construct the $0$th comparison map.

		Section~\ref{section: uniform covering isometries} develops a framework of covering isometries for uniform Roe algebras, analogous to the construction for Roe algebras (see~\cite[Section~4.3]{willett2020higher}). Let $H$ be a separable Hilbert space. A \emph{uniform cover} of a uniformly finite-to-one coarse map $f \colon X \to Y$ is an isometry
		\[
			V_f \colon \ell^2(X,H) \longrightarrow \ell^2(Y,H)
		\]
		which acts fibrewise in a manner compatible with the action of $f$ on the underlying metric spaces, and is well-behaved with respect to finite-dimensional subspaces of the Hilbert space $H$ (see conditions~$(2)$ and~$(3)$ of Definition~\ref{definition: uniform cover}). This notion already appears implicitly in~\cite{brodzki2007property} in the special case of coarse equivalences. We show that any uniformly finite-to-one coarse map $f \colon (X,d) \to (Y, \partial)$ admits a uniform cover, and that the map $\operatorname{Ad}_{V_f}$ is a $*$-homomorphism between the stabilisations of the corresponding uniform Roe algebras. Moreover, the induced map in $K$-theory depends only on the closeness class of $f$. We further show that the $0$th comparison map defines a natural transformation from uniformly finite homology to the $K_0$-theory of uniform Roe algebras. The section concludes with a brief discussion of this framework from the perspective of étale groupoids.

	In Section~\ref{section: rigidity vs injectivity}, we prove Lemma~\ref{Intro: lemma: A}. The key step in the proof of the implication $(1)\Rightarrow(2)$ is to show that, given an isomorphism of uniform Roe algebras $\Phi \colon C^*_u(X) \longrightarrow C^*_u(Y)$, the induced map in $K$-theory coincides with the map arising from the coarse equivalence $f_\Phi \colon X \longrightarrow Y$ provided by Theorem~\ref{Intro: theorem: rigidity}. Since $K_0(\Phi)$ preserves the class of the unit, it follows that
	\[
		H_0^{\mathrm{uf}}(f_\Phi)([\mathbbm{1}_X]) = [\mathbbm{1}_Y]
	\]
	modulo $\ker(\alpha_0^Y)$. As the comparison maps are assumed to be injective, the result of \cite[Theorem A]{whyte99amenablity} (see Theorem \ref{theorem: Block-Weinberger-Whyte}) completes the proof of the implication $(1)\Rightarrow(2)$. For the converse implication, to each positive function $h \in \ell^{\infty}(X, \mathbb{Z})$ we associate a subspace $X(h)$ of a doubling of $X$. We show that if $h_1, h_2$ are positive functions such that
	\[
		\alpha_0^X([h_1]) = \alpha_0^X([h_2]),
	\]
	then the uniform Roe algebras of $X(h_1)$ and $X(h_2)$ are isomorphic. By the already established implication, there exists a bijective coarse equivalence $f \colon X(h_1) \to X(h_2)$. The graph of $f$ provides a cycle $\delta \in C_1^{\mathrm{uf}}(X;\mathbb{Z})$ such that
	\[
		\partial_1(\delta) = [h_1] - [h_2].
	\]
	This completes the proof of Lemma~\ref{Intro: lemma: A}. In the final section, we prove Theorem~\ref{Intro: theorem: B} and discuss further consequences of Lemma~\ref{Intro: lemma: A}.
	
	\vspace{0.4cm}
	
	\begin{center}
		\textbf{Acknowledgements}
	\end{center}
	
	The author is grateful to his advisors, Alessandro Vignati and Romain Tessera, for their advice on the presentation of this work. He also thanks Ján~\v{S}pakula and the University of Southampton for their hospitality during his research visit. The author is indebted to Rufus~Willett for valuable comments on Remark~\ref{remark: Baum-Connes}. This work was partially supported by the travel grant ECOST Actions CaLISTA (E-COST-GRANT-CA21109-15f1764d) and by the ANR grant ROAR (ANR-25-CE40-5029)
.	
	\vspace{1 cm}
	
	\setcounter{section}{0}
	
	\section{Preliminaries} \label{section: preliminaries}
	
	In this section, we introduce the basic notions from coarse geometry, uniform Roe algebras, and uniformly finite homology. For a more detailed exposition, we refer the reader to~\cite{roe2003lectures}. Let $X$, $Y$, and $Z$ be sets. For a subset $E \subset X \times Y$ we define its \emph{adjoint} as
	\[
		E^{-1} := \{(y,x) \mid (x,y) \in E\}.
	\]
	For subsets $E \subset X \times Y$ and $F \subset Y \times Z$ we define their \emph{composition} as
	\[
		E \circ F := \{(x,z) \mid \text{for some } y \in Y \text{ such that } (x,y) \in E, \, (y,z) \in F\}.
	\]
	The subset $\Delta_X =\{(x,x) \mid x \in X\} $ of $X \times X$ is called the \emph{diagonal of $X$}. Given a map $f \colon X \to Y$, we regard its graph $\operatorname{Graph}(f)$ as a subset of $Y \times X$. By a \emph{partially defined map} $t \colon X \dashrightarrow Y$ we mean a triple 
	$$
		\bigl( \operatorname{dom}(t), t, \operatorname{ran}(t)\bigr),
	$$
	where $\operatorname{dom}(t)$ and $\operatorname{ran}(t)$ are subsets of $X$ and $Y$, respectively, and $t$ is map from $\operatorname{dom}(t)$ to $\operatorname{ran}(t)$.

	\subsection{Coarse geometry} \label{subsection: coarse geometry}
	
	Coarse geometry studies geometric objects from the perspective of their large-scale structure. One of its principal objects of interest is the class of uniformly locally finite metric spaces.

	\begin{definition}\label{definition: ulf space}
		A metric space $(X,d)$ is said to be \emph{uniformly locally finite} if for every $R \ge 0$ there exists $N \in \mathbb{N}$ such that
		\[
			|B_R(x)| \le N
		\]
		for all $x \in X$, where $B_R(x)$ denotes the ball of radious $R$ centered at $x$.
	\end{definition}

	In the literature, uniformly locally finite metric spaces are also referred to as \emph{bounded geometry} spaces. For example, any connected uniformly locally finite graph is a uniformly locally finite metric space, when equipped with the shortest path metric; in particular, the Cayley graph of a finitely generated group provides a canonical example. Discretisations of bounded geometry Riemannian manifolds yield further examples. To define appropriate morphisms between uniformly locally finite metric spaces, the notion of asymptotic sets plays a central role.
	
	\begin{definition}
		Let $(X,d)$ and $(Y,\partial)$ be uniformly locally finite metric spaces. Two subsets $S_1,S_2 \subset Y \times X$ are said to be \emph{asymptotic}, denoted $S_1 \asymp S_2$, if there exists $R \ge 0$ such that
		\[
			S_2 \subset \bigcup_{(y,x)\in S_1} \bigl(B_R(y) \times B_R(x)\bigr)
			\quad \text{and} \quad
			S_1 \subset \bigcup_{(y,x)\in S_2} \bigl(B_R(y) \times B_R(x)\bigr).
		\]
	\end{definition}
	
	Informally, two subsets of $Y \times X$ are asymptotic if each is contained in a bounded neighbourhood of the other. A subset of $X \times X$ that is asymptotic to the diagonal $\Delta_X$ is called an \emph{entourage}. It is straightforward to verify that asymptoticity defines an equivalence relation. Moreover, for metric spaces $(X,d)$ and $(Y,\partial)$ and maps $f,g \colon X \to Y$, the graphs $\operatorname{Graph}(f)$ and $\operatorname{Graph}(g)$ are asymptotic if and only if the maps are uniformly close, that is,
	\[
		\sup_{x \in X} \partial\bigl(f(x),g(x)\bigr) < \infty.
	\]
	In this case, we say that $f$ is \emph{close} to $g$. The notion of closeness is compatible with composition: the closeness class of a composition depends only on the closeness classes of the constituent maps.

	\begin{definition}
		Let $(X,d)$ and $(Y,\partial)$ be uniformly locally finite metric spaces. A map $f \colon X \to Y$ is said to be \emph{coarse} if for every $R \ge 0$ there exists $S \ge 0$ such that
		\[
			f\bigl(B_R(x)\bigr) \subset B_S\bigl(f(x)\bigr)
			\quad \text{for all } x \in X.
		\]
		A coarse map $f \colon X \to Y$ is called a \emph{coarse equivalence} if there exists a coarse map $g \colon Y \to X$ such that the compositions $f \circ g$ and $g \circ f$ are close to $\operatorname{id}_Y$ and $\operatorname{id}_X$, respectively. In this case, $g$ is called a \emph{coarse inverse} of $f$. A coarse map $f \colon X \to Y$ is said to be \emph{uniformly finite-to-one} if
		\[
			\sup_{y \in Y} \bigl|f^{-1}(\{y\})\bigr| < \infty.
		\]
	\end{definition}
	We say that two uniformly locally finite metric spaces $(X,d)$ and $(Y,\partial)$ are \emph{(bijectively) coarsely equivalent} if there exists a (bijective) coarse equivalence
	\[
		f \colon (X,d) \longrightarrow (Y,\partial).
	\]
	It is straightforward to verify that every coarse equivalence between uniformly locally finite metric spaces is uniformly finite-to-one. We define the category $\mathbf{Coarse}$ to have uniformly locally finite metric spaces as objects and closeness classes of uniformly finite-to-one coarse maps as morphisms. Isomorphisms in this category are precisely the coarse equivalences.
	
	Given a uniformly locally finite metric space $(X,d)$ and a subset $A \subset X$, the restriction of the metric $d$ to $A$ turns $(A,d)$ into a uniformly locally finite metric space. Another construction that will play an important role is that of a \emph{doubling}. For a uniformly locally finite metric space $(X,d)$, its \emph{$n$th doubling} is the uniformly locally finite metric space $(X^{(n)},d_n)$ defined by
	\[
		X^{(n)} := X \times \{1,\ldots,n\}, 
			\qquad 
		d_n\bigl((x,i),(y,j)\bigr) := d(x,y) + |i-j|,
	\]
	for $x,y \in X$ and $1 \le i,j \le n$. It is straightforward to check that, for each $1 \le k \le n$, the inclusion
	\[
		i_k \colon X \longrightarrow X^{(n)}, \qquad x \longmapsto (x,k),
	\]
	is a coarse equivalence. The projection map $p \colon X \times \{1, \ldots, n\} \to X$ constitutes its coarse inverse.

	For a uniformly locally finite metric space $(X,d)$, the authors of \cite{skandalis2002coarse} constructed a topological groupoïd that captures the coarse geometry of $(X,d)$. In this paper, we do not appeal to the theory of topological groupoids as a primary tool in our proofs. Nevertheless, we believe that it is instructive to highlight the connection between coarse geometry and topological groupoids. For a detailed introduction to topological groupoids, we refer the reader to \cite{sims2017etale}. A \emph{groupoid} is a small category in which every morphism is invertible. We shall identify a groupoid with its set of morphisms, denoted by $\mathcal{G}$. The set of objects, or \emph{units}, is denoted by $\mathcal{G}^{(0)}$ and is identified with the subset of identity morphisms in $\mathcal{G}$. The groupoid $\mathcal{G}$ is equipped with the range and source maps
	\[
		r,s \colon \mathcal{G} \longrightarrow \mathcal{G}^{(0)},
	\]
	which assign to each arrow its range and source, respectively, together with a partially defined multiplication
	\[
		\mathcal{G} {_s}\times_r \mathcal{G}
		:= \{(\gamma_1,\gamma_2) \in \mathcal{G}\times\mathcal{G} \mid s(\gamma_1)=r(\gamma_2)\}
		\longrightarrow \mathcal{G}, 
		\qquad (\gamma_1,\gamma_2) \longmapsto \gamma_1\gamma_2,
	\]
	and an inversion map $\mathcal{G} \to \mathcal{G}$, $\gamma \mapsto \gamma^{-1}$. These maps satisfy the usual axioms that make $\mathcal{G}$ into a category. We shall be concerned with \emph{topological groupoids}. A \emph{locally compact, Hausdorff groupoid} is a groupoid $\mathcal{G}$ endowed with a locally compact, Hausdorff topology such that the range and source maps, the multiplication, and the inversion are continuous. Here $\mathcal{G} {_s}\times_r \mathcal{G}$ is equipped with the subspace topology inherited from the product topology on $\mathcal{G}\times\mathcal{G}$. A locally compact, Hausdorff groupoid $\mathcal{G}$ is said to be \emph{étale}\footnote{	
		Many authors do not require étale groupoids to be Hausdorff. Throughout this paper, we restrict attention to the Hausdorff case.		
		} if its range map (equivalently, its source map) is a local homeomorphism. A groupoid $\mathcal{G}$ is said to be \emph{principal} if its isotropy bundle
	\[
		\operatorname{Iso}(\mathcal{G}) := \{\gamma \in \mathcal{G} \mid s(\gamma)=r(\gamma)\}
	\]
	coincides with $\mathcal{G}^{(0)}$, i.e. the only arrows whose source and range coincide are the identity arrows. An étale groupoid is said to be \emph{ample} if its unit space $\mathcal{G}^{(0)}$ is totally disconnected.
	
	In \cite{skandalis2002coarse}, Skandalis, Tu, and Yu associated to every uniformly locally finite metric space $(X,d)$ a topological groupoid $\mathcal{G}(X,d)$, defined as follows. For a countable discrete topological space $A$, let $\beta A$ denote its Stone--Čech compactification. Given an entourage $E \subset X \times X$, denote by $\overline{E}$ its closure in $\beta(X \times X)$. One then defines a topological space
	\[
		\mathcal{G}(X,d)
			= \bigcup_{\substack{E \text{ is an} \\ \text{entourage}}} \overline{E}
			\;\subset\; \beta(X \times X),
	\]
	equipped with the weak topology. The pair groupoid $X \times X$ embeds as a dense open subset of $\mathcal{G}(X,d)$. It is shown in \cite{skandalis2002coarse} that the groupoid operations on $X \times X$ extend continuously to $\mathcal{G}(X,d)$, endowing it with the structure of a Hausdorff, principal, ample groupoid. We shall refer to $\mathcal{G}(X,d)$ as the \emph{coarse groupoid} associated to $(X,d)$. Note that $\mathcal{G}(X,d)$ is not second countable unless the metric space $(X,d)$ is finite.
	
	\subsection{Uniform Roe algebras and rigidity}
	
	Uniform Roe algebras are $C^*$-algebras associated with uniformly locally finite metric spaces. Let $H$ be a separable Hilbert space and let $X$ be a set. Denote by $\ell^2(X, H)$ the Hilbert space of all square-summable functions from $X$ to $H$. There is a canonical identification
	\[
		\ell^2(X,H) \cong \ell^2(X) \otimes H.
	\]
	For a subset $A \subseteq X$, let $\mathbbm{1}_A$ denote the orthogonal projection onto $\ell^2(A,H)$. If $A = \{x\}$ is a singleton, we simply write $\mathbbm{1}_x$. The main ingredient in the definition of uniform Roe algebras is the notion of support.
	
	\begin{definition}
		Let $(X,d)$ and $(Y,\partial)$ be uniformly locally finite metric spaces, and let $H$ be a separable Hilbert space. The \emph{support} of a bounded operator $T \colon \ell^2(X, H) \to \ell^2(Y, H)$ is the set
		\[
			\operatorname{supp}(T) := 
			\{(y,x) \in Y \times X \mid \|\mathbbm{1}_y T \mathbbm{1}_x\| \neq 0\}.
		\]
		We say that a bounded operator $T \colon \ell^2(X, H) \to \ell^2(Y, H)$ is \emph{controlled} if its support is asymptotic to a coarse map. An operator $T \in \mathcal{B}(\ell^2(X, H))$ is said to have \emph{controlled propagation} if its support is asymptotic to $\operatorname{id}_X$ (equivalently, if its support is an entourage). It is said to be \emph{approximable} if it is a norm limit of controlled propagation operators.
	\end{definition}
	
	It is straightforward to verify that the support behaves well with respect to compositions, adjoints, and sums. More precisely, let $(X,d_X)$, $(Y,d_Y)$, and $(Z,d_Z)$ be uniformly locally finite metric spaces, and let
	\[
		T_1, T_2 \colon \ell^2(X,H) \to \ell^2(Y,H), 
		\qquad 
		S \colon \ell^2(Y,H) \to \ell^2(Z,H)
	\]
	be bounded operators. Then the supports of sums, products, and adjoints satisfy the following relations:
	\begin{equation*}
		\begin{split}
			& \operatorname{supp}(T_1 + T_2)
			 \subset \operatorname{supp}(T_1) \cup \operatorname{supp}(T_2), \\
			& \operatorname{supp}(S T_1)
			 \subset \operatorname{supp}(S) \circ \operatorname{supp}(T_1), \\
			&\operatorname{supp}(T_1^*)
			 = \operatorname{supp}(T_1)^{-1}.
		\end{split}
	\end{equation*}
	In particular, the controlled propagation operators on $\ell^2(X,H)$ form a $*$-subalgebra of $\mathcal{B}(\ell^2(X,H))$. Controlled propagation operators in $\mathcal{B}(\ell^2(X, H))$ can be naturally quantified by the extent to which their support is spread. For a bounded operator $T \in \mathcal{B}(\ell^2(X,H))$, we define its \emph{propagation} by
	\[
		\operatorname{prop}(T)
		= \sup \{ d(x,x') \mid (x,x') \in \operatorname{Supp}(T) \}.
	\]
	Note that an operator $T$ has \emph{controlled propagation} if and only if $\operatorname{prop}(T) < \infty$.

	\begin{definition}
		Let $(X,d)$ be a uniformly locally finite metric space. The \emph{uniform Roe algebra} $C^*_u(X)$ of $(X,d)$ is the norm closure in $\mathcal{B}(\ell^2(X))$ of the $*$-algebra of bounded operators of controlled propagation.
	\end{definition}
	
	Uniform Roe algebras provide a natural bridge between coarse geometry and operator algebras, as their structural properties encode many coarse invariants. Alternatively, uniform Roe algebras can be defined as the reduced groupoid $C^*$-algebras of the coarse groupoids associated to uniformly locally finite metric spaces (see \cite[Proposition~10.29]{roe2003lectures}). Note that the uniform Roe algebra of a doubling of $X$ is canonically isomorphic to the matrix algebra over $C^*_u(X)$, that is, for every $n \ge 1$ one has:
	\[
		C^*_u(X^{(n)}) \cong C^*_u(X) \otimes M_n(\mathbb{C}).
	\]
	We shall also consider stabilisations of uniform Roe algebras. Let $H$ be a separable, infinite-dimensional Hilbert space. The $C^*$-algebra $C^*_u(X) \otimes \mathbb{K}$ admits a natural faithful representation on $\ell^2(X, H)$. For each $x \in X$, consider an isometry 
		\[
			v_x \colon H \longrightarrow \ell^2(X,H), \qquad w \longmapsto \delta_x \otimes w.
		\]
	For a finite-dimensional subspace $W \subset H$ and $R \ge 0$, consider the following set of bounded operators on $\ell^2(X,H)$:
		\[
			\mathbb{C}[X,R,W] := \bigl\{ T \;\big|\; \operatorname{prop}(T) \le R \text{ and } v_x^* T v_y \in \mathcal{B}(W) \text{ for all } x,y \in X \bigr\}.
		\]
	The following statement summarises the discussion in \cite[Section 4]{baudier2022uniform}, providing a convenient dense $*$-subalgebra of $C^*_u(X) \otimes \mathbb{K}$.

	\begin{proposition}[{\cite[Section 4]{baudier2022uniform}}] \label{proposition: dense subset of the stabilisation}
		Let $(X,d)$ be a uniformly locally finite metric space, and let $H$ be a separable Hilbert space.
		Then the union of the sets $\mathbb{C}[X, R, W]$ over all finite-dimensional subspaces $W \subset H$ and all $R \ge 0$ is dense in $C^*_u(X) \otimes \mathbb{K}$.
	\end{proposition}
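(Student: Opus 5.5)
We must show that the union $\mathcal{A} := \bigcup_{R,W} \mathbb{C}[X,R,W]$ is dense in $C^*_u(X)\otimes\mathbb{K}$, viewed inside $\mathcal{B}(\ell^2(X,H))$ via $\ell^2(X,H)\cong \ell^2(X)\otimes H$. The plan is to establish two inclusions, $\overline{\mathcal{A}} \subseteq C^*_u(X)\otimes\mathbb{K}$ and $C^*_u(X)\otimes\mathbb{K}\subseteq \overline{\mathcal{A}}$. First note that $\mathcal{A}$ is a $*$-algebra. If $T\in\mathbb{C}[X,R,W]$ and $S\in\mathbb{C}[X,R',W']$, then both lie in $\mathbb{C}[X,\max(R,R'),W+W']$, since $\mathcal{B}(W)\subseteq\mathcal{B}(W+W')$ after extending by zero on the orthogonal complement. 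Sums and adjoints stay in that set. For products, $v_x^*TSv_y=\sum_z v_x^*Tv_z\,v_z^*Sv_y$, where the sum is finite because propagation is bounded and $X$ is uniformly locally finite. Each term lies in $\mathcal{B}(W+W')$, and the propagation is at most $R+R'$, by the support calculus recalled above.

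For the inclusion $\mathcal{A}\subseteq C^*_u(X)\otimes\mathbb{K}$, fix an orthonormal basis $e_1,\dots,e_k$ of $W$ and let $E_{ij}$ denote the matrix units on $H$. For $T\in\mathbb{C}[X,R,W]$ the compression $v_x^*Tv_y$ is supported on $W$, so we may write
\[
T=\sum_{i,j=1}^k T_{ij}\otimes E_{ij}, \qquad \langle T_{ij}\delta_y,\delta_x\rangle := \langle v_x^*Tv_y e_j, e_i\rangle .
\]
Each $T_{ij}$ is supported in $\operatorname{supp}(T)$, so it has propagation at most $R$. It is also bounded, with $\|T_{ij}\|\le\|T\|$, because it is a compression of $T$ by the isometry $\delta_x\mapsto\delta_x\otimes e_j$ (respectively $e_i$). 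Hence $T_{ij}\in C^*_u(X)$, and $T$ lies in the algebraic tensor product $C^*_u(X)\odot M_k\subseteq C^*_u(X)\otimes\mathbb{K}$.

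For the reverse inclusion, $C^*_u(X)\otimes\mathbb{K}$ is the closed span of elementary tensors $a\otimes K$ with $a\in C^*_u(X)$ and $K\in\mathbb{K}$. By density it suffices to take $a$ of finite propagation and $K$ of finite rank. Choose a finite-dimensional $W$ containing both the range of $K$ and the range of $K^*$, so that $K\in\mathcal{B}(W)$. Then $a\otimes K$ has propagation $\operatorname{prop}(a)$, and $v_x^*(a\otimes K)v_y=\langle a\delta_y,\delta_x\rangle K\in\mathcal{B}(W)$. Thus $a\otimes K\in\mathcal{A}$, and since $\mathcal{A}$ is closed under finite sums, the closure of $\mathcal{A}$ contains $C^*_u(X)\otimes\mathbb{K}$.

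The only delicate point is the boundedness claim $\|T_{ij}\|\le\|T\|$ in the first inclusion. Finite propagation alone does not make a matrix bounded, so the bound really has to come from writing $T_{ij}$ as a compression of the bounded operator $T$. The rest is routine bookkeeping.
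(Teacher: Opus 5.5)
Your proof is correct: reading $\mathcal{B}(W)$ as the corner $P_W\mathcal{B}(H)P_W$ is the right interpretation. The matrix decomposition $T=\sum_{i,j}T_{ij}\otimes E_{ij}$, with each $T_{ij}$ a bounded compression of $T$ of propagation at most $R$, gives $\bigcup_{R,W}\mathbb{C}[X,R,W]\subseteq C^*_u(X)\otimes\mathbb{K}$, and approximation by finite-propagation, finite-rank elementary tensors gives the reverse inclusion into the closure. The paper does not prove this proposition itself but only records it as a summary of Section~4 of \cite{baudier2022uniform}, so your argument is a standard direct verification of the cited statement; note that density uses only closure under finite sums, so the $*$-algebra check is harmless but unnecessary.
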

	
	In particular, any approximable operator $T \in \mathcal{B}(\ell^2(X, H))$ for which there exists a finite-dimensional subspace $W \subset H$ such that $v_x^* T v_y$ belongs to $\mathcal{B}(W)$ for all $x,y \in X$ belongs to the stabilisation $C^*_u(X) \otimes \mathbb{K}$ of the uniform Roe algebra.
	
	The rigidity problem for uniform Roe algebras asks whether Morita equivalences (respectively, isomorphisms) of uniform Roe algebras induce coarse equivalences (respectively, bijective coarse equivalences) between the underlying uniformly locally finite metric spaces. In \cite{baudier2022uniform}, the authors provided an affirmative answer to the rigidity problem in the case of Morita equivalences.
	
	\begin{theorem}[{\cite[Theorem 1.4]{baudier2022uniform}}] \label{theorem: rigidity theorem}
		Let $(X,d)$ and $(Y,\partial)$ be uniformly locally finite metric spaces. Then the $C^*$-algebras $C^*_u(X)$ and $C^*_u(Y)$ are Morita equivalent if and only if $(X,d)$ and $(Y,\partial)$ are coarsely equivalent.
	\end{theorem}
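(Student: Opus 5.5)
The plan is to prove the two implications separately. The direction ``coarse equivalence $\Rightarrow$ Morita equivalence'' is soft. The converse goes through stable isomorphism, spatial implementation, and an extraction argument.

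\emph{Coarse equivalence gives Morita equivalence.} Given a coarse equivalence $f\colon X\to Y$, I would build an isometry $V\colon \ell^2(X,H)\to\ell^2(Y,H)$ that is supported near $\operatorname{Graph}(f)$. Since $f$ is uniformly finite-to-one, the finitely many fibres $H$ over each $f^{-1}(y)$ can be embedded isometrically into the fibre $H$ over $y$. This can be done so that any finite-dimensional $W\subset H$ is sent into a fixed finite-dimensional subspace, uniformly in $y$.

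Then $\operatorname{Ad}_V$ maps controlled-propagation operators to controlled-propagation operators. Using Proposition~\ref{proposition: dense subset of the stabilisation}, it gives a $*$-homomorphism $C^*_u(X)\otimes\mathbb{K}\to C^*_u(Y)\otimes\mathbb{K}$. Doing the same for a coarse inverse $g$, and noting that $gf$ is close to $\operatorname{id}_X$, shows that the image is a full corner. Equivalently, one obtains a stable isomorphism, and hence a Morita equivalence. A cleaner alternative is to pass to a subspace $X'\subset X$ on which $f$ is injective with coarsely dense image. One then notes that $C^*_u(X)$ and $C^*_u(f(X'))$ are both Morita equivalent to $C^*_u(X')$ via full corners, since $\mathbbm{1}_{X'}$ is full by coarse density.

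\emph{Reduction to a spatially implemented stable isomorphism.} Both algebras are unital, hence $\sigma$-unital. By Brown--Green--Rieffel, Morita equivalence therefore yields an isomorphism
\[
\Phi\colon C^*_u(X)\otimes\mathbb{K}\to C^*_u(Y)\otimes\mathbb{K}.
\]
The stabilisation represented on $\ell^2(X,H)$ contains the compacts $\mathcal{K}(\ell^2(X,H))$ as an essential ideal, and it acts irreducibly. Since $\Phi$ carries this minimal ideal onto the corresponding ideal, a standard argument shows that $\Phi=\operatorname{Ad}_U$ for some unitary $U\colon\ell^2(X,H)\to\ell^2(Y,H)$.

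Fix a rank-one projection $e\in\mathcal{B}(H)$. The goal is a constant $\delta>0$ such that for every $x\in X$ there is $y\in Y$ with
\[
\|\mathbbm{1}_y\, U(\mathbbm{1}_x\otimes e)\|\ge\delta .
\]
For such $x$ we set $f(x)=y$, and we construct $g\colon Y\to X$ symmetrically from $U^*$.

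\emph{Main obstacle: uniformity.} The hard step is proving that such a uniform $\delta$ exists, and then that $f$ is coarse. Each vector $U(\delta_x\otimes\xi)$ has norm one, but its mass could a priori spread thinly over many $y$'s as $x$ varies. I would attack this through a uniform approximability lemma: the family $\{\Phi(\mathbbm{1}_x\otimes e)\}_{x\in X}$ is uniformly approximable. That is, for every $\varepsilon>0$ there are $R$ and a finite-dimensional $W$ such that every element is $\varepsilon$-close to an operator in $\mathbb{C}[Y,R,W]$.

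To prove this, I would argue by contradiction. Suppose it fails for some $\varepsilon$; then one can pick a sequence of points $x_n$ witnessing the failure at increasing scales. For every subset $M\subset\mathbb{N}$, the strong sum $\sum_{n\in M}\mathbbm{1}_{x_n}\otimes e$ lies in the domain algebra, so its image under $\Phi$ must be approximable. A Baire category argument on $2^{\mathbb{N}}$ then produces a contradiction. This is the genuinely delicate part.

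Granting the lemma, a projection of rank one that is $\varepsilon$-close to something of propagation $R$ with fibres in a fixed finite-dimensional $W$ cannot be spread too thinly. Combined with uniform local finiteness of $Y$, this gives the uniform $\delta$.

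\emph{Coarseness and coarse equivalence.} Coarseness of $f$ follows from the same approximability lemma applied to $\Phi(a)$, where $a$ is a partial isometry of propagation at most $r$ carrying $\delta_x\otimes\xi$ to $\delta_{x'}\otimes\xi$. Because $\Phi(a)$ is approximately of propagation $S$, the large components of $U(\delta_x\otimes\xi)$ and $U(\delta_{x'}\otimes\xi)$ must lie within distance about $S$ of each other. Hence $d(x,x')\le r$ forces $\partial(f(x),f(x'))\le S'$.

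Finally, the symmetric construction gives $g$ from $U^*$. The inequality $\langle U^*U(\delta_x\otimes\xi),\delta_x\otimes\xi\rangle=1$, together with uniform approximability, forces $g\circ f$ to be close to $\operatorname{id}_X$, and likewise $f\circ g$ to be close to $\operatorname{id}_Y$. Thus $f$ is a coarse equivalence.
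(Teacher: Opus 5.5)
The reductions are sound: the easy direction (your version through a subspace $X'\subset X$ on which $f$ is injective is the cleanest), Brown--Green--Rieffel, and spatial implementation via the ideal of compact operators.

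\textbf{The gap.} It lies in the passage from uniform approximability of $\{\Phi(\mathbbm{1}_x\otimes e)\}_{x\in X}$ to a uniform $\delta$. Your claim that a rank-one projection $\varepsilon$-close to an element of $\mathbb{C}[Y,R,W]$ cannot be spread too thinly, given bounded geometry of $Y$, is false. Here is a counterexample.
\begin{itemize}
\item Let $Y=\bigsqcup_n Y_n$ be a coarse disjoint union of expanders, and let $p_n$ be the projection onto $|Y_n|^{-1/2}\mathbbm{1}_{Y_n}$.
\item Since $0$ is isolated in the spectrum of the Laplacian $\Delta$, the ghost projection $p=\sum_n p_n$ is a norm limit of polynomials $P_k(\Delta)$.
\item Hence $\|p_n-\mathbbm{1}_{Y_n}P_k(\Delta)\|\le\|p-P_k(\Delta)\|$, and $\operatorname{prop}(\mathbbm{1}_{Y_n}P_k(\Delta))\le\deg P_k$.
\end{itemize}
So the rank-one projections $p_n\otimes e$, and even all strong sums $\sum_{n\in M}p_n\otimes e$, are uniformly approximable in exactly your sense, with $W$ one-dimensional. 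Yet $\sup_y\|\mathbbm{1}_y(p_n\otimes e)\|=|Y_n|^{-1/2}\to 0$.

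Consequently nothing derived from forward approximability alone can exclude $\Phi(\mathbbm{1}_{x_n}\otimes e)=p_n\otimes e$. That includes your Baire-category lemma, which is an earlier-known ingredient rather than the delicate one. This ghost phenomenon is exactly what earlier rigidity results had to assume away, via property A or by excluding non-compact ghost projections.

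\textbf{The missing idea.} It is the two-sidedness in step (2) of the strategy recalled after Theorem~\ref{theorem: rigidity theorem}: $U^*$ is coarse-like as well. Concentration of $U(\delta_x\otimes\zeta)$ in $Y$ is controlled by approximability of $\operatorname{Ad}_{U^*}$ on diagonal operators of $Y$, together with bounded geometry of $X$. This is the opposite pairing to the one you use.

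In the unstabilised case the argument runs as follows.
\begin{itemize}
\item Suppose every atom of $\xi=U\delta_x$ satisfies $|\xi(y)|^2<\eta$.
\item Choose sign functions $g_1,\dots,g_m\in\ell^\infty(Y)$ at random. The inner products $\sum_y g_i(y)g_j(y)|\xi(y)|^2$ have variance at most $\eta$. So for $\eta$ small relative to $m$, the unit vectors $U^*g_iU\delta_x$ can be chosen pairwise almost orthogonal.
\item Coarse-likeness of $\operatorname{Ad}_{U^*}$ puts each $U^*g_iU\delta_x$ within $\varepsilon$ of $\ell^2(B_R(x))$. This space has dimension at most $N_R$, with $R$ independent of $x$.
\item Once $m$ is a large multiple of $N_R$, a trace count contradicts this. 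The result is a uniform $\delta$ depending only on $N_R$.
\end{itemize}
In the stable setting this must be run after compressing to finite-dimensional fibres, since $U^*(\mathbbm{1}_B\otimes 1)U$ is only a multiplier and need not be approximable.

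Without such an argument your map $f$ is not defined, and the later steps have nothing to rest on. Those steps are coarseness, which also needs that the heavy points of each $\Phi(\mathbbm{1}_x\otimes e)$ lie within bounded distance of one another, and the coarse-inverse step.
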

	
	The authors of \cite{martinez2025c} established an analogue of Theorem~\ref{theorem: rigidity theorem} for a wide class of Roe-type algebras. Their methods are also applicable to stabilisations of uniform Roe algebras. In what follows, we recall the strategy of the proof of Theorem~\ref{theorem: rigidity theorem} from \cite{martinez2025c} in the special case of an isomorphism $\Phi \colon C^*_u(X) \to C^*_u(Y)$.
	\begin{enumerate}
		\item One proves that the isomorphism $\Phi$ is \emph{spatially implemented}, in the sense that there exists a unitary operator $U \colon \ell^2(X) \to \ell^2(Y)$ such that $\Phi = \operatorname{Ad}_U$ (see \cite[Lemma 3.1]{vspakula2013rigidity});
		\item One shows that the unitaries $U$ and $U^*$ implementing $\Phi$ are \emph{coarse-like}\footnote{ 
		In \cite{martinez2025c, martinez2025rigidityframeworkroelikealgebras}, this notion is also referred to as \emph{weakly approximately controlled}.	
		} (see \cite[Theorem 3.5]{braga2023gelfand}). Recall from~\cite[Definition 3.1]{braga2023gelfand} that an operator 
			\[
				Q \colon \ell^2(X) \longrightarrow \ell^2(Y)
			\]
			is said to be \emph{coarse-like} if, for every $R \ge 0$ and every $\varepsilon > 0$, there exists $S \ge 0$ such that for every contraction $T \in \mathcal{B}(\ell^2(X))$ of propagation less than $R$ the operator $\operatorname{Ad}_Q(T)$ is $\varepsilon$-close (in norm) to an operator on $\ell^2(Y)$ of propagation at most $S$.
		\item One shows that, for every $\delta \in (0,1)$, there exist constants $R,S \ge 0$ such that the subset
			\[
				\quad \qquad f_{\delta,R,S}^U = \bigcup 
					\left\{
						B \times A
							\;\middle|\;
						\operatorname{diam}(B) \le R,\ 
						\operatorname{diam}(A) \le S,\ 
						\bigl\| \mathbbm{1}_B\, U\, \mathbbm{1}_A \bigr\| > \delta
					\right\}
			\]
			of $Y \times X$ is asymptotic to the graph of a coarse equivalence $f_{\Phi}$ between $(X,d)$ and $(Y, \partial)$.
	\end{enumerate}

	The resulting coarse equivalence $f_{\Phi} \colon (X,d) \to (Y,\partial)$ is uniquely determined up to closeness. We shall refer to $f_{\Phi}$ as the \emph{coarse equivalence induced by $\Phi$}. The following theorem provides a key tool for relating the coarse equivalence $f_{\Phi}$ back to the isomorphism $\Phi$.

	\begin{theorem}[{\cite[Theorem 4.5]{martinez2025c}}] \label{theorem: norm limit of unitaries supported on f_Phi}
		Let $(X,d)$ and $(Y,\partial)$ be uniformly locally finite metric spaces, and let $\Phi \colon C^*_u(X) \to C^*_u(Y)$ be an isomorphism. Let $U$ be a unitary implementing $\Phi$, and let $f_{\Phi}$ be a coarse equivalence induced by $\Phi$. Then, for an infinite-dimensional Hilbert space $H$, the unitary $U \otimes \operatorname{id}_H$ is a norm limit of unitaries supported on subsets of $Y \times X$ that are asymptotic to the graph of $f_{\Phi}$.
	\end{theorem}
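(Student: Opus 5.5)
The plan is to reduce the statement to two facts: $U$ is a norm limit of operators supported near $\operatorname{Graph}(f_\Phi)$, and such near-unitaries can be corrected to genuine unitaries with still-controlled support, using the infinite-dimensional Hilbert space $H$ as extra room.

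\textbf{Step 1: approximate control of $U$ relative to $f_\Phi$.} Using coarse-likeness (step (2) of the strategy recalled above), I would show that for every $\varepsilon>0$ there is $S\ge 0$ with the following property: for all $A\subset X$ and $B\subset Y$ such that $B\times A$ lies outside the $S$-neighbourhood of $\operatorname{Graph}(f_\Phi)$, one has $\|\mathbbm{1}_B U\mathbbm{1}_A\|<\varepsilon$.
\begin{itemize}
\item First consider rank-one propagation-$0$ operators $\mathbbm{1}_x$. Their images $U\mathbbm{1}_xU^*$ are $\varepsilon$-close to operators of uniformly bounded propagation.
\item The definition of $f_{\delta,R,S}^U$, with $\delta$ small, then forces $U\delta_x$ to be concentrated near $f_\Phi(x)$, uniformly in $x$.
\item To pass from points to arbitrary sets $A$, sum over $A$ using uniform local finiteness. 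The standard way to get this uniformly is the ``$\ell^\infty$-trick'': argue by contradiction with a sequence of disjoint bad blocks. The map $\lambda\mapsto\sum_n\lambda_n\mathbbm{1}_{A_n}$, $\lambda\in\ell^\infty$, is then sent under $\operatorname{Ad}_U$ to a family whose uniform approximability contradicts the bad blocks.
\end{itemize}
From this I would deduce that $U$ is a norm limit of operators $T_\varepsilon$ with $\operatorname{supp}(T_\varepsilon)\asymp\operatorname{Graph}(f_\Phi)$. To get $T_\varepsilon$, cut $U$ along a bounded-multiplicity cover of $X$ by bounded sets $A_i$, and keep only the blocks $\mathbbm{1}_{N_S(f_\Phi(A_i))}U\mathbbm{1}_{A_i}$; uniform local finiteness controls the norm of the discarded part.

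\textbf{Step 2: replace $T_\varepsilon\otimes\operatorname{id}_H$ by a unitary with controlled support.} Here the infinite-dimensional $H$ enters. Fix a uniform cover $V_f\colon\ell^2(X,H)\to\ell^2(Y,H)$ supported on $\operatorname{Graph}(f_\Phi)$. Because $f_\Phi$ is a coarse equivalence with uniformly bounded fibres and $H$ is infinite-dimensional, I expect to choose $V_f$ unitary: distribute each fibre $\ell^2(f_\Phi^{-1}(y),H)\oplus\ell^2(\text{nearby points of }Y\setminus f_\Phi(X),H)$ onto a single copy of $H$. Then set $W=V_f^*(U\otimes\operatorname{id}_H)$, a unitary on $\ell^2(X,H)$ which by Step 1 is a norm limit of operators of finite propagation. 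It therefore suffices to approximate $W$ by unitaries of finite propagation; composing with $V_f$ then gives unitaries supported near $\operatorname{Graph}(f_\Phi)$.

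\textbf{Step 3: the main obstacle.} Given a finite-propagation $T$ with $\|T-W\|<\varepsilon$, the polar part $T|T|^{-1}$ is unitary but only approximable, not controlled. I would proceed as follows.
\begin{itemize}
\item Approximate $|T|^{-1}$ by a polynomial in $T^*T$, which has finite propagation. This gives a finite-propagation $T'$ with $\|T'^*T'-1\|$ and $\|T'T'^*-1\|$ small.
\item Exploit $H\cong H\otimes\ell^2(\mathbb{N})$. Take the Halmos dilation
$$\begin{pmatrix}T' & (1-T'T'^*)^{1/2}\\ (1-T'^*T')^{1/2} & -T'^*\end{pmatrix},$$
with the square roots again replaced by polynomials. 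This is still only an approximate unitary, but its defect is now controlled and small.
\item Iterate, or use a summable telescoping construction in the extra copies of $H$ (an Eilenberg-swindle-style rotation), to obtain an exact unitary. Its support is a union of finitely many entourages at each stage, with the corrections summable in norm, and it differs from $W\oplus 1$ by $O(\varepsilon)$.
\end{itemize}
Finally, identify $\ell^2(X,H)\oplus\ell^2(X,H)$ with $\ell^2(X,H)$ fibrewise; this preserves supports. I expect this last step, keeping the support uniformly controlled while forcing exact unitarity, to be the delicate point. If the iteration fails to converge with controlled support, the fallback is to cut $W$ along a coarse decomposition of $X$ into uniformly bounded pieces. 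On each piece the compression is a finite-dimensional near-isometry, so exact unitary corrections of bounded support can be chosen locally, using the infinite-dimensionality of $H$ to absorb index defects.
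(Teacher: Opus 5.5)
The paper gives no proof of this statement; it quotes it from \cite[Theorem~4.5]{martinez2025c}. Judged on its own terms, your proposal has a genuine gap at the end of Step~1.

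Grant the quasi-local estimate $\|\mathbbm{1}_B U \mathbbm{1}_A\| < \varepsilon$ whenever $B \times A$ avoids the $S$-neighbourhood of $\operatorname{Graph}(f_\Phi)$. The block-cutting still does not give a norm approximation. Take a partition $\{A_i\}$ of $X$ into bounded pieces and put $P_i := \mathbbm{1}_{N_S(f_\Phi(A_i))}$. The discarded part is $D = \sum_i (1-P_i)\, U \mathbbm{1}_{A_i}$. Each summand has norm $<\varepsilon$ and the summands have disjoint domains. Their ranges, however, are spread over essentially all of $Y$ and can add up constructively. For $\xi = \sum_i \xi_i$ with $\xi_i \in \ell^2(A_i)$ you only get $\|D\xi\| \le \varepsilon \sum_i \|\xi_i\|$, which is not bounded by a multiple of $\varepsilon\|\xi\|$. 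Uniform local finiteness bounds the overlap of the kept blocks $P_i U \mathbbm{1}_{A_i}$, not the far-away tails. This step uses nothing beyond quasi-locality and bounded geometry, so it would prove that every quasi-local operator is approximable. That implication is only known under extra hypotheses such as property~A.

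Moreover, the conclusion of Step~1 is not a soft consequence of coarse-likeness, because it already contains bijective rigidity. Suppose $\|U-T\|<1$ with $\operatorname{supp}(T) \subset \{(y,x) \mid \partial(y,f_\Phi(x)) \le R\}$. For finite $A \subset X$ put $N := \{y \mid \partial(y, f_\Phi(A)) \le R\}$. Then $(1-\mathbbm{1}_N)T\xi = 0$ for $\xi \in \ell^2(A)$, so $\|(1-\mathbbm{1}_N)U\xi\| < \|\xi\|$ for nonzero $\xi \in \ell^2(A)$. Hence $\mathbbm{1}_N U$ is injective on $\ell^2(A)$, and $|N| \ge |A|$. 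The same argument with $U^*$ and $T^*$ gives the dual Hall condition on $Y$. The bipartite graph is locally finite because $Y$ is uniformly locally finite and $f_\Phi$ is uniformly finite-to-one. Hall's theorem and the Schr\"oder--Bernstein argument for matchings then give a bijection within distance $R$ of $f_\Phi$, which is exactly the bijective rigidity theorem of \cite{avignati}.

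Conversely, compress a unitary approximant of $U \otimes \operatorname{id}_H$ by $1 \otimes p$, with $p$ a rank-one projection on $H$. This shows that the statement itself implies the conclusion of Step~1. So essentially all the depth sits in Step~1, and an $\ell^\infty$-trick plus cutting cannot supply it.

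Step~3 is also left open. Each round of your polar-part and Halmos-dilation scheme needs polynomial approximants of higher degree, so propagation grows from round to round and the limit is merely approximable. Summable corrections give controlled support only if all of them have uniformly bounded propagation, which your construction does not ensure. The fallback fails too: $W$ moves mass across the boundaries of the bounded pieces, so its compressions to them are not near-isometries, and $\ell^2(A_i,H)$ is not finite-dimensional.
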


	\subsection{Uniformly finite homology} \label{subsection: UF homology}
	
	Uniformly finite homology was introduced by Block and Weinberger in~\cite{block1992aperiodic} to study tilings of non-amenable polyhedra and characteristic numbers of manifolds whose universal covers admit metrics of positive scalar curvature. It was subsequently employed by Whyte in~\cite{whyte99amenablity} to prove the geometric von Neumann conjecture.
	
	\begin{definition} \label{definition: Z-valued controlled propagation function}
		Let $(X,d)$ be a uniformly locally finite metric space, and let $n \ge 1$ be an integer. A function $f \colon X^{n} \to \mathbb{Z}$ is said to have \emph{controlled propagation} if there exists $R \ge 0$ such that
	\[
		f(x_1,\ldots,x_n) = 0 \quad \text{whenever} \quad \max_{i,j} d(x_i,x_j) > R.
	\]
	\end{definition}
	
	Let $(X,d)$ be a uniformly locally finite metric space. For each $n \ge 0$, define the abelian group of \emph{uniformly finite $n$-chains} $C_n^{\mathrm{uf}}(X;\mathbb{Z})$ to consist of all bounded, controlled-propagation functions $f \colon X^{n+1} \to \mathbb{Z}$. For $n \ge 0$ and $f \in C_{n+1}^{\mathrm{uf}}(X;\mathbb{Z})$, define the boundary operator
	\[
		\partial_{n+1}(f)(x_0,\ldots,x_n) =
			\sum_{i=0}^{n+1} (-1)^i
				\sum_{y \in X}
					f(x_0,\ldots,x_{i-1},y,x_i,\ldots,x_n),
	\]
	where, in the inner sum, the variable $y$ is inserted in the $i$-th coordinate. The map $\partial_{n+1}$ is a well-defined group homomorphism
	\[
		\partial_{n+1} \colon C_{n+1}^{\mathrm{uf}}(X;\mathbb{Z}) \longrightarrow C_n^{\mathrm{uf}}(X;\mathbb{Z}),
	\]
	and the identities $\partial_n \circ \partial_{n+1} = 0$ hold for all $n \ge 1$. Consequently, the collection $\bigl(C_*^{\mathrm{uf}}(X;\mathbb{Z}), \partial_*\bigr)$ forms a chain complex of abelian groups, which we call the \emph{uniformly finite chain complex} of $(X,d)$.
	
	\begin{definition}\label{definition: uf-homology}
		Let $(X,d)$ be a uniformly locally finite metric space. The \emph{$n$th uniformly finite homology group} of $(X,d)$ with integer\footnote{ 
		One can similarly define the uniformly finite chain complex and uniformly finite homology with coefficients in $\mathbb{R}$. 
		} coefficients, denoted $H_n^{\mathrm{uf}}(X;\mathbb{Z})$, is defined to be the $n$th homology group of the uniformly finite chain complex of $(X,d)$.
	\end{definition}

	Let $(X,d)$ and $(Y,\partial)$ be uniformly locally finite metric spaces, and let $f \colon X \to Y$ be a uniformly finite-to-one coarse map. Define a homomorphism
	\[
		f_* \colon C_n^{\mathrm{uf}}(X;\mathbb{Z}) \longrightarrow C_n^{\mathrm{uf}}(Y;\mathbb{Z}), \quad f_*(g)(\bar{y})
		=
		\sum_{f(\bar{x})=\bar{y}} g(\bar{x}), \, \text{ for } \, \bar{y} \in Y^{n+1}.
	\]
	It is straightforward to check that $f_*$ is a well-defined group homomorphism and that it commutes with the boundary operators. Consequently, it induces a homomorphism in homology,
	\[
		H_n^{\mathrm{uf}}(f) \colon H_n^{\mathrm{uf}}(X;\mathbb{Z}) \longrightarrow H_n^{\mathrm{uf}}(Y;\mathbb{Z}).
	\]
	Moreover, if two uniformly finite-to-one coarse maps $f,g \colon (X,d) \to (Y,\partial)$ are close, then the induced maps in uniformly finite homology coincide. In particular, for each $n \ge 0$, uniformly finite homology defines a functor
	\[
		H_n^{\mathrm{uf}} \colon \mathbf{Coarse} \longrightarrow \mathbf{Ab}
	\]
	from the coarse category to the category of abelian groups.

	In this paper, we shall be primarily concerned with the case $n=0$. Recall from~\cite[Definition 10.21]{roe2003lectures} that a \emph{partial translation} on a metric space $(X,d)$ is a partially defined bijection
	\[
		t \colon X \dashrightarrow X \qquad \text{that satisfies} \quad \sup_{x \in \operatorname{dom}(t)} d(x,t(x)) < \infty.
	\]
	Note that the above condition holds if and only if the indicator function of $\mathbbm{1}_{\operatorname{Graph}(t)}$ belongs to $C_1^{\text{uf}}(X; \mathbb{Z})$. From the coarse-geometric perspective, a partial translation exhibits its domain and range as identical subspaces of $(X,d)$. Given a partial translation $t \colon X \dashrightarrow X$, the indicator function of its graph,
	\[
		\mathbbm{1}_{\operatorname{Graph}(t)} \colon X \times X \longrightarrow \mathbb{Z},
	\]
	is a controlled propagation function in the sense of Definition~\ref{definition: Z-valued controlled propagation function}. A direct computation shows that
	\[
		\partial_1\!\left(\mathbbm{1}_{\operatorname{Graph}(t)}\right)
		=
		\mathbbm{1}_{\operatorname{dom}(t)} - \mathbbm{1}_{\operatorname{ran}(t)}.
	\]
	In particular, if two subsets of $X$ are related by a partial translation, then they define the same class in the uniformly finite homology group $H_0^{\mathrm{uf}}(X;\mathbb{Z})$.
	
	\begin{remark} \label{remark: 1-cycles are generated by part. trans}
		Let $(X,d)$ be a uniformly locally finite metric space. The group $C_1^{\mathrm{uf}}(X; \mathbb{Z})$ is generated by indicator functions of entourages $E \subset X \times X$. The argument of~\cite[Lemma 4.10]{roe2003lectures} shows that for any such entourage $E$ there exist partial translations $t_1, \ldots, t_n \colon X \dashrightarrow X$ whose graphs are pairwise disjoint subsets of $E$ and satisfy
	\[
		E = \bigsqcup_{i=1}^n \operatorname{Graph}(t_i).
	\]
	In particular, the indicator function of $E$ decomposes as a finite sum of indicator functions of the graphs $\operatorname{Graph}(t_i)$. Consequently, the group $C_1^{\mathrm{uf}}(X; \mathbb{Z})$ is generated by indicator functions of graphs of partial translations of $(X,d)$.
	\end{remark}
	
	The homology class of $\mathbbm{1}_X$ in $H_0^{\mathrm{uf}}(X; \mathbb{Z})$ is referred to as the \emph{fundamental class} of $(X,d)$. It plays a central role in determining which coarse equivalences are close to bijective ones. This is made precise by the following theorem of Whyte.
	
	\begin{theorem}[{\cite[Theorem A]{whyte99amenablity}}] \label{theorem: Block-Weinberger-Whyte}
		Let $(X,d)$, $(Y,\partial)$ be uniformly locally finite metric spaces, and let $f \colon X \to Y$ be a coarse equivalence. The following are equivalent:
		\begin{enumerate}
			\item There exists a bijective coarse equivalence $\tilde f \colon X \to Y$ that is close to $f$;
			\item $H^{\mathrm{uf}}_0(f)\big([\mathbbm{1}_X]\big) = [\mathbbm{1}_Y]$.
		\end{enumerate}
	\end{theorem}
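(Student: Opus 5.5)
The direction $(1)\Rightarrow(2)$ is formal. If $\tilde f$ is a bijective coarse equivalence close to $f$, then $H^{\mathrm{uf}}_0(f)=H^{\mathrm{uf}}_0(\tilde f)$, because close uniformly finite-to-one coarse maps induce the same map in uniformly finite homology. On chains, $\tilde f_*(\mathbbm{1}_X)(y)=|\tilde f^{-1}(y)|=1$ for every $y$, so $\tilde f_*(\mathbbm{1}_X)=\mathbbm{1}_Y$ and hence $H^{\mathrm{uf}}_0(f)([\mathbbm{1}_X])=[\mathbbm{1}_Y]$.

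For $(2)\Rightarrow(1)$ I would first translate the hypothesis into a matching problem. The chain $f_*(\mathbbm{1}_X)$ is the function $y\mapsto |f^{-1}(y)|$. The hypothesis gives $c\in C_1^{\mathrm{uf}}(Y;\mathbb{Z})$ with
\[
\partial_1 c = f_*(\mathbbm{1}_X)-\mathbbm{1}_Y,
\]
where $|c|\le K$ and $c(y,y')=0$ whenever $\partial(y,y')>R$. Consider the locally finite bipartite graph $\Gamma_{R'}$ on $X\sqcup Y$, with $x$ joined to $y$ iff $\partial(f(x),y)\le R'$. A perfect matching of $\Gamma_{R'}$ is exactly a bijection $\tilde f\colon X\to Y$ at distance at most $R'$ from $f$. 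Such a bijection is automatically a coarse equivalence, being close to one. By the infinite Hall theorem for locally finite bipartite graphs (the two one-sided matchings given by compactness, combined by the König/Cantor--Schröder--Bernstein argument), it suffices to verify both Hall conditions:
\[
|N(A)|\ge |A| \text{ for finite } A\subset X, \qquad |N(B)|\ge |B| \text{ for finite } B\subset Y.
\]

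To get these, I would use $c$ as an integral flow on $Y$. Each $y$ receives $|f^{-1}(y)|$ units and emits one unit, and the excess is carried along $c$. For a finite $S\subset Y$, summing $\partial_1 c$ over $S$ shows that $\sum_{y\in S}\bigl(|f^{-1}(y)|-1\bigr)$ equals the net flux of $c$ across the boundary of $S$. Only edges of length at most $R$ cross, and each carries at most $K$ units. This bounds the imbalance by a constant times $|N_R(S)\setminus S|$, using uniform local finiteness of $Y$.

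A bare isoperimetric bound only gives Hall's condition up to a multiplicative constant, so I would instead use the exact flow. By Remark~\ref{remark: 1-cycles are generated by part. trans}, I write $c=\sum_i \pm\mathbbm{1}_{\operatorname{Graph}(t_i)}$ with finitely many partial translations $t_i$. The boundary identity then becomes an equality of multisets, witnessed by a bounded-displacement bijection
\[
X\sqcup \bigsqcup_i \operatorname{ran}(t_i)\;\cong\; Y\sqcup\bigsqcup_i \operatorname{dom}(t_i),
\]
with finitely many copies. Composing with the $t_i^{\pm1}$ gives a bounded-displacement bijection $X\sqcup C\cong Y\sqcup C'$ together with a bounded bijection $C\cong C'$. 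The remaining task is to cancel the extra copies.

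I expect this cancellation to be the main obstacle. The back-and-forth chains through the extra copies could a priori be long, so boundedness of the resulting bijection is not automatic. My first attempt would avoid chasing chains directly. Instead, for a finite $A\subset X$, I would restrict the flow to a finite sub-network and apply integral max-flow/min-cut. The flow leaving $f(A)$ is exactly $|A|$ units, and it must be absorbed by $|A|$ distinct unit sinks. Any cut separating $f(A)$ from its $R'$-neighbourhood (with $R'$ a fixed multiple of $R$ depending on $K$ and the local-finiteness constants) has capacity at most the flux bound above. I would aim to show this forces $|N_{R'}(f(A))|\ge|A|$. The symmetric argument with $-c$ and a coarse inverse of $f$ gives the condition for finite $B\subset Y$, and Hall's theorem then finishes the proof.
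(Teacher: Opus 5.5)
Your argument for $(1)\Rightarrow(2)$, the reduction of $(2)\Rightarrow(1)$ to the two-sided Hall condition in $\Gamma_{R'}$, and the flux estimate are all correct. The paper gives no proof of its own here; it imports the statement from Whyte. The genuine gap is the step you flag yourself. You never establish the \emph{exact} inequalities $|N_{R'}(f(A))|\ge|A|$ and $|f^{-1}(N_{R'}(B))|\ge|B|$ at a single uniform scale $R'$, and the max-flow/min-cut sketch cannot establish them as set up.

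First, it is not true that exactly $|A|$ units leave $f(A)$: the net flux out of $f(A)$ is $\sum_{y\in f(A)}(|f^{-1}(y)|-1)$. Since $c$ transports unlabelled units with possibly large through-traffic, nothing forces the units originating in $A$ to be absorbed inside $N_{R'}(f(A))$. For example, take $Y=\mathbb{N}$, $|f^{-1}(0)|=k+1$, and $c$ of value $k$ on every edge $n\to n+1$.

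More importantly, to show that $|A|$ units can be delivered to distinct sinks of the finite network, you need \emph{lower} bounds on all of its cuts. The flow only gives the \emph{upper} bound $K\kappa_R\,|N_R(T)\setminus T|$ on crossing capacities, where $\kappa_r=\sup_y|B_r(y)|$. Fed into any cut or layering argument with $T_j=N_{jR}(f(A))$, this yields $|A|\le|T_j|+C(|T_{j+1}|-|T_j|)$ with $C=K\kappa_R$, which is the multiplicative-constant inequality again. Iterating over layers gives $|A|\le|T_J|$ only for $J$ of order $C\log|A|$, which is not uniform. Chain-chasing through the extra copies fails for the same reason. Neither route shows where the coarse density of $f(X)$ enters, although it is indispensable for the second Hall condition.

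The missing idea is to go through a \emph{fractional} transport and remove the constant by moving in small steps. Put $w=f_*(\mathbbm{1}_X)$ and let $f(X)$ be $D$-dense.

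First, let every point of $f(X)$ send mass $\delta=1/(2\kappa_D)$ to each other point of its $D$-ball. This is a nonnegative kernel carrying $w$ to a function $\mu\ge\delta$. Moreover $\mu-\mathbbm{1}_Y=\operatorname{div}\Theta$, where $\Theta$ is the bounded, bounded-range flow given by $c$ minus the spreading flow. Write $\Theta$ antisymmetrically and let $\Theta^+$ be its positive part.

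Along $\mu_t=(1-t)\mu+t\mathbbm{1}_Y$ one keeps $\mu_t\ge\delta$. So for $h\le\delta/\sup_y\sum_{y'}\Theta^+(y,y')$, the kernel that moves $h\Theta^+$ one hop and leaves the remaining mass in place is nonnegative and carries $\mu_t$ to $\mu_{t+h}$.

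Compose the spreading kernel with the $J\approx\sup_y\sum_{y'}\Theta^+(y,y')/\delta$ such steps. This gives a nonnegative kernel $P$ on $Y\times Y$ of range at most $\rho=D+J\max(R,D)$, with row sums $w$ and column sums $1$. Summing $P$ over the rows in a finite $S$ gives exactly $\sum_{y\in S}w(y)\le|N_\rho(S)|$. Summing over the columns in a finite $B$ gives $|B|\le\sum_{y\in N_\rho(B)}w(y)$. These are both of your Hall conditions at the uniform scale $\rho$, and your Hall argument then completes the proof.
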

	
	It is straightforward to verify that for a finite metric space $(X,d)$, one has $H_0^{\mathrm{uf}}(X;\mathbb{Z}) \cong \mathbb{Z}$. In contrast, the $0$th uniformly finite homology group is often highly non-trivial for infinite spaces. For example, $H_0^{\mathrm{uf}}(\mathbb{Z};\mathbb{Z})$ is an infinite-dimensional real vector space.
	
	\begin{definition} \label{definition: coarsely connected}
		A uniformly locally finite metric space $(X,d)$ is said to be \emph{coarsely connected} if there exists $R \ge 0$ such that for any $x,y \in X$ there are points $x_0, x_1, \ldots, x_{n+1} \in X$ satisfying
		$$
			x = x_0, \qquad y = x_{n+1}, \qquad d(x_i, x_{i+1}) \le R, \, \text{ for all } 0 \le i \le n.
		$$
	\end{definition}
	
	The following theorem shows that, for infinite coarsely connected metric spaces, the $0$th uniformly finite homology group is either trivial or a real vector space.
	
	\begin{theorem}[{\cite[Corollary 3.1 and Corollary 3.2]{whyte99amenablity}}] \label{theorem: injectivity/isomorphism of extension of scalars}
		Let $(X,d)$ be an infinite uniformly locally finite metric space. Then the extension of scalars
		\[
			i \colon H_0^{\mathrm{uf}}(X;\mathbb{Z}) \longrightarrow H_0^{\mathrm{uf}}(X;\mathbb{R})
		\]
		is an injective group homomorphism. Moreover, if $(X,d)$ is coarsely connected, then $i$ is an isomorphism.
	\end{theorem}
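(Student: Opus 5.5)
The argument has two independent parts: injectivity, which I would treat as an integrality statement for network flows, and surjectivity in the coarsely connected case, which I would treat as a redistribution of fractional mass towards infinity.

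\emph{Injectivity.} Let $h\in C_0^{\mathrm{uf}}(X;\mathbb{Z})$ and suppose $h=\partial_1 c$ for some $c\in C_1^{\mathrm{uf}}(X;\mathbb{R})$. I want an integer chain $c'\in C_1^{\mathrm{uf}}(X;\mathbb{Z})$ with $\partial_1 c'=h$.
\begin{enumerate}
\item Fix $R$ with $\operatorname{supp}(c)$ inside the $R$-entourage, and set $K=\lceil\sup|c|\rceil$.
\item Let $\Gamma_R$ be the graph on $X$ with an edge between $x$ and $y$ whenever $d(x,y)\le R$. By uniform local finiteness it has bounded degree. The chain $c$ is a real flow on $\Gamma_R$ with capacities in $[-K,K]$ and integer net outflow $h(x)$ at each vertex.
\item I look for an integer-valued flow with the same constraints: values $c'(x,y)\in\{-K,\dots,K\}$ on edges of $\Gamma_R$, and $\sum_y\bigl(c'(x,y)-c'(y,x)\bigr)=h(x)$ for every $x$. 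Here $\partial_1$ is computed using the given formula.
\item The set of all assignments $\prod_{\text{edges}}\{-K,\dots,K\}$ is compact by Tychonoff. Each vertex constraint involves only finitely many coordinates, so it defines a closed set.
\item By compactness it suffices to satisfy the constraints at finitely many vertices $F\subset X$ simultaneously.
\item For finite $F$, the relevant system is a linear program. Its constraint matrix consists of rows of the vertex–edge incidence matrix of a directed graph, restricted to the finitely many edges meeting $F$, so it is totally unimodular. The right-hand sides $h|_F$ and the bounds $\pm K$ are integers.
\item The polytope is nonempty because it contains $c$. Hence it has an integral vertex, which gives an integer solution on $F$.
\item Compactness then yields a global $c'$. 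It is bounded by $K$ and supported in the $R$-entourage, so $c'\in C_1^{\mathrm{uf}}(X;\mathbb{Z})$ and $\partial_1 c'=h$.
\end{enumerate}
This shows $i$ is injective. Note that this part does not use that $X$ is infinite.

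\emph{Surjectivity for coarsely connected infinite $X$.} Given $r\in C_0^{\mathrm{uf}}(X;\mathbb{R})$, write $r=\lfloor r\rfloor+\varphi$ with $\varphi$ taking values in $[0,1)$. It suffices to show that $\varphi$ is homologous over $\mathbb{R}$ to a bounded integer-valued function. The approach is to push the fractional parts ``towards infinity'' along a directed forest.

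Fix $R$ from Definition~\ref{definition: coarsely connected}, so that $\Gamma_R$ is connected, infinite and of bounded degree. Suppose I have a spanning forest of $\Gamma_R$ with the following properties:
\begin{itemize}
\item each vertex $v$ has exactly one parent $p(v)$;
\item each vertex has a finite set of descendants.
\end{itemize}
Then the infinite branches point towards ends. I define by well-founded recursion on descendants:
\[
s(v)=\varphi(v)+\sum_{p(u)=v}\bigl(s(u)-\lfloor s(u)\rfloor\bigr).
\]
I keep $\lfloor s(v)\rfloor$ at $v$ and send the fractional part $s(v)-\lfloor s(v)\rfloor\in[0,1)$ along the edge $(v,p(v))$. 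Since at most $\deg$ fractions in $[0,1)$ arrive at $v$, $\lfloor s(v)\rfloor$ is bounded by the degree bound of $\Gamma_R$. Let $\gamma$ be the $1$-chain with value $s(v)-\lfloor s(v)\rfloor$ on each edge $(v,p(v))$. It is bounded, of propagation at most $R$, and satisfies
\[
\varphi-\partial_1\gamma=\lfloor s\rfloor
\]
up to the sign convention. So $[\varphi]$ lies in the image of $i$.

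\emph{Main obstacle.} The crux is constructing such a forest: every vertex has a parent, and every descendant set is finite. A naive spanning tree oriented towards a single end fails, since a vertex can have infinitely many descendants and the recursion is then not well-founded.

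What I would try first:
\begin{enumerate}
\item Take a breadth-first spanning tree $T$ of $\Gamma_R$ from a base point $o$.
\item Call a vertex heavy if its subtree in $T$ is infinite. By König's lemma, heavy vertices form a subtree with no leaves, and every light vertex hangs off a heavy vertex through a finite subtree.
\item Inside the heavy part, for each heavy $v$ choose a heavy child $n(v)$. Following $n$ from any vertex gives a ray.
\item Orient each heavy $v$ towards $n(v)$, and orient light vertices towards their $T$-parent.
\item Check that descendant sets become finite. Heavy vertices not on the chosen ray from $o$ start new rays, so each vertex should now have only finitely many ancestors-in-reverse. This is where I expect to need the most care, possibly by re-choosing the heavy children level by level.
\end{enumerate}

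If this fails, the fallback is Whyte's original approach. First show $\partial_1$ onto the fractional parts via a Hall-marriage argument on an infinite bounded-degree graph. Alternatively, note that nonamenable $X$ has $H_0^{\mathrm{uf}}=0$, which leaves the amenable case to be handled by the forest construction.
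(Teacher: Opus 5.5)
The paper gives no proof of this statement. It is quoted from Whyte (Corollaries~3.1 and~3.2 of \cite{whyte99amenablity}), so your proposal should be read as a self-contained replacement for that citation, and as such it is correct.

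For injectivity, the total-unimodularity-plus-compactness argument is sound. On a finite $F$ the equality constraints form a row-submatrix of the incidence matrix of a finite directed graph, and antiparallel arcs $(x,y)$, $(y,x)$ cause no trouble. The box constraints $\pm K$ are integral, and the polytope is nonempty because it contains $c$. Your vertex constraint has the opposite sign to the paper's $\partial_1 c(x)=\sum_y c(y,x)-\sum_y c(x,y)$, which is only cosmetic.

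For surjectivity, the step you flagged as the main obstacle already closes with the construction you wrote down. You need no re-choosing of heavy children, and any spanning tree $T$ works, not just a breadth-first one. Set $p(v)=n(v)$ for heavy $v$ and $p(v)=p_T(v)$ for light $v$. The root is heavy because $X$ is infinite, and all edges used lie in $T$, so this is a forest in which every vertex has exactly one parent. The vertices $u$ with $p(u)=v$ are the light $T$-children of $v$, together with $p_T(v)$ only when $v$ is heavy and $v=n(p_T(v))$.

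Hence a backward path from $v$ first climbs $T$ through heavy vertices, taking at most $\operatorname{depth}_T(v)$ steps. Once it reaches a light vertex, it stays inside that vertex's finite $T$-subtree. So the descendants of $v$ lie among its finitely many $T$-ancestors and the finitely many finite light subtrees hanging off them. Your recursion, the bound $0\le\lfloor s\rfloor\le\deg\Gamma_R$, and the identity $\varphi+\partial_1\gamma=\lfloor s\rfloor$ (in the paper's sign convention) then go through as you state. The Hall-type fallback is therefore unnecessary.

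Compared with the bare citation, your route costs a page but makes the mechanisms explicit and yields slightly more. Injectivity visibly uses neither infiniteness nor coarse connectedness. Moreover, an integer $0$-chain with a real primitive of propagation $R$ and sup-norm at most $K$ has an integer primitive of propagation at most $R$ and sup-norm at most $\lceil K\rceil$. Surjectivity only needs every component of $\Gamma_R$ to be infinite for some $R$, since the forest can be built componentwise. It also produces an integer representative of $[r]$ within $\deg\Gamma_R$ of $\lfloor r\rfloor$, with the correction given by a $1$-chain with values in $[0,1)$ and propagation $R$.
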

	
	In particular, if $(X,d)$ is an amenable, coarsely connected, uniformly locally finite metric space, then $H_0^{\mathrm{uf}}(X;\mathbb{Z})$ is a non-zero real vector space; see~\cite[Theorem~D]{whyte99amenablity}. It also follows from the above result that the $0$th uniformly finite homology group is always torsion-free.
	
	Uniformly finite homology provides information about the structure of the $K_0$-group of the uniform Roe algebra. The $K_0$-group of $\ell^{\infty}(X)$ identifies with $\ell^{\infty}(X, \mathbb{Z})$, which, by definition, coincides with the group of uniformly finite $0$-chains $C_0^{\mathrm{uf}}(X; \mathbb{Z})$. The canonical inclusion $i \colon \ell^{\infty}(X) \hookrightarrow C^*_u(X)$ induces a group homomorphism 
	\[
		K_0(i) \colon \ell^{\infty}(X, \mathbb{Z}) \longrightarrow K_0(C^*_u(X)).
	\] 
	It is straightforward to verify that the composition $K_0(i) \circ \partial_1$ vanishes: By Remark~\ref{remark: 1-cycles are generated by part. trans}, the indicator functions of partial translations generate $C_1^{\mathrm{uf}}(X; \mathbb{Z})$; therefore, it suffices to show that for any partial translation $t \colon X \dashrightarrow X$ one has
	\begin{equation} \label{equation: K_0(i) circ partial_1 = 0}
		K_0(i)(\mathbbm{1}_{\operatorname{dom}(t)}) - K_0(i)(\mathbbm{1}_{\operatorname{ran}(t)}) = K_0(i) \circ \partial_1(\mathbbm{1}_{\operatorname{Graph}(t)}) = 0.
	\end{equation}
	It is enough to find a partial isometry in the uniform Roe algebra that intertwines the two projections. Let $v \in \mathcal{B}(\ell^2(X))$ be the partial isometry defined by
	\[
		v(\delta_x) =
			\begin{cases}
				\delta_{t(x)} & \text{if } x \in \operatorname{dom}(t), \\
				0 & \text{otherwise.}
			\end{cases}
	\]
	Then $vv^* = \mathbbm{1}_{\operatorname{ran}(t)}$ and $v^*v = \mathbbm{1}_{\operatorname{dom}(t)}$. Since $t$ is a partial translation, the partial isometry $v$ has finite propagation and therefore belongs to the uniform Roe algebra of $(X,d)$. Consequently, \eqref{equation: K_0(i) circ partial_1 = 0} holds.
	
	\begin{definition} \label{definition: comparison map}
		Let $(X,d)$ be a uniformly locally finite metric space. The canonical map $K_0(i) \colon \ell^{\infty}(X, \mathbb{Z}) \to K_0(C^*_u(X))$ descends to a group homomorphism
		\[
			\alpha_0 \colon H_0^{\mathrm{uf}}(X; \mathbb{Z}) \longrightarrow K_0(C^*_u(X)),
		\]
		which will be referred to as the \emph{$0$th comparison map}.
	\end{definition}
	
	The map $\alpha_0$ coincides with the $0$th comparison map appearing in Conjecture~\ref{conjecture: HK} for coarse groupoids. It remains an open problem whether this map is split injective in general.
	
	\section{Uniform covering isometries} \label{section: uniform covering isometries}
	
	Let $H$ be a separable Hilbert space. It is well known (for instance, see \cite[Section 4.3]{willett2020higher}) that, for a proper coarse map between two uniformly locally finite metric spaces $f \colon (X,d) \to (Y,\partial)$, there exists an isometry $S \colon \ell^2(X, H) \to \ell^2(Y, H)$, called a \emph{covering isometry of $f$}, such that the support of $S$ is asymptotic to the graph of $f$ and $\operatorname{Ad}_S$ restricts to a $*$-homomorphism between the associated Roe algebras. Moreover, the induced map in $K$-theory depends only on $f$. However, covering isometries do not, in general, induce $*$-homomorphisms between uniform Roe algebras or their stabilisations. In this section, we introduce a modification of the above notion adapted to the setting of stabilised uniform Roe algebras.
	
	\begin{definition} \label{definition: uniform cover}
		Let $(X,d)$ and $(Y,\partial)$ be uniformly locally finite metric spaces, and let $H$ be a separable Hilbert space. Given a coarse map $f \colon (X,d) \to (Y,\partial)$, an isometry $S \colon \ell^2(X,H) \to \ell^2(Y,H)$ is called a \emph{uniform cover} if the following conditions are satisfied:
		\begin{enumerate}
			\item There exists $R \ge 0$ such that, for any $x \in X$ and $y \in Y$ with $\|\mathbbm{1}_{y} S \mathbbm{1}_{x}\| \neq 0$, one has $d(y,f(x)) \le R$; in other words, $S$ covers $f$.
			\item For any finite-dimensional subspace $V \subset H$, there exists a finite-dimensional subspace $W \subset H$ such that $S$ restricts to a map $\ell^2(X,V) \to \ell^2(Y,W)$.
			\item For any finite-dimensional subspace $W \subset H$, there exists a finite-dimensional subspace $V \subset H$ such that $S^*$ restricts to a map $\ell^2(Y,W) \to \ell^2(X,V)$.
		\end{enumerate}
	\end{definition}

	The first condition in the above definition asserts that $S$ is a covering isometry of $f$. The motivation for the last two conditions is that we require the isometries to induce maps between the filtrations of $C^*_u(X) \otimes \mathbb{K}$ introduced in Proposition~\ref{proposition: dense subset of the stabilisation}. More precisely, for every $R \ge 0$ and a finite-dimensional subspace $V \subset H$, there should exist $S \ge 0$ and a finite dimensional subspace $W \subset H$ such that $\operatorname{Ad}_S$ restricts to a map
	$$
		\operatorname{Ad}_S \colon \mathbb{C}[X, R, V] \to \mathbb{C}[Y, S, W].
	$$
	This property is demonstrated in the proof of Lemma~\ref{lemma: K-theory of uniform covers} below.
	The following lemma shows that an approximable operator -- hence, in particular, of controlled propagation -- which satisfies conditions~$(2)$ and~$(3)$ of Definition~\ref{definition: uniform cover} defines an element of the multiplier algebra of $C^*_u(X) \otimes \mathbb{K}$.

	\begin{lemma}\label{lemma: multiplier algebra}
		Let $(X,d)$ be a uniformly locally finite metric space. Any approximable operator $S \in \mathcal{B}(\ell^2(X, H))$ that satisfies conditions~(2) and~(3) of Definition~\ref{definition: uniform cover} belongs to the multiplier algebra of $C^*_u(X) \otimes \mathbb{K}$.
	\end{lemma}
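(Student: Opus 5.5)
We must show that $ST$ and $TS$ lie in $C^*_u(X) \otimes \mathbb{K}$ for every $T \in C^*_u(X) \otimes \mathbb{K}$. Since $C^*_u(X)\otimes\mathbb{K}$ is faithfully and nondegenerately represented on $\ell^2(X,H)$, this is exactly membership of $S$ in the multiplier algebra, viewed as the idealiser inside $\mathcal{B}(\ell^2(X,H))$. Because multiplication is norm continuous and the ideal is closed, Proposition~\ref{proposition: dense subset of the stabilisation} reduces the task to $T \in \mathbb{C}[X,R,V]$ for some $R \ge 0$ and some finite-dimensional $V \subset H$. Moreover, $TS = (S^*T^*)^*$, $T^* \in \mathbb{C}[X,R,V]$, and $S^*$ is again approximable with conditions (2) and (3) interchanged. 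So it suffices to show $ST \in C^*_u(X)\otimes\mathbb{K}$, using condition (2) for $S$; the symmetric argument with condition (3) handles $S^*T$.

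Fix such a $T$. By condition (2), choose a finite-dimensional $W \subset H$ with $S\,\ell^2(X,V) \subset \ell^2(X,W)$, and let $P_W = \mathrm{id}_{\ell^2(X)} \otimes p_W$ denote the projection onto $\ell^2(X,W)$. Since $v_x^* T v_y \in \mathcal{B}(V)$, the range of $T$ lies in $\ell^2(X,V)$, so $ST = P_W S T$. We also have $T = T P_V$, where $P_V = \mathrm{id}\otimes p_V$.

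Next, write $S = \lim S_n$ in norm with $\operatorname{prop}(S_n) < \infty$. The approximants need not respect the finite-dimensional subspaces. To correct this, set $S_n' := P_W S_n P_V$. The projections $P_W$ and $P_V$ have propagation $0$, so $\operatorname{prop}(S_n') < \infty$. Then
\[
	ST = P_W S P_V T = \lim_n P_W S_n P_V T = \lim_n S_n' T.
\]
Each $S_n'T$ has finite propagation, and $v_x^*(S_n'T)v_y = p_W\,(\cdots)\,p_V$ is an operator that maps $H$ into $W$ and vanishes on $V^\perp$. Hence it lies in $\mathcal{B}(W+V)$, i.e.\ it vanishes off and maps into $W+V$. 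Therefore $S_n'T \in \mathbb{C}[X,R_n,W+V]$, which lies in $C^*_u(X)\otimes\mathbb{K}$ by Proposition~\ref{proposition: dense subset of the stabilisation}. Taking norm limits gives $ST \in C^*_u(X)\otimes\mathbb{K}$.

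The main point needing care is the identity $ST = P_W S P_V T$ and the containment of the compressed matrix entries in $\mathcal{B}(W+V)$, in the precise sense used in the definition of $\mathbb{C}[X,R,W]$. Both follow from the fact that $P_V$ and $P_W$ are diagonal with respect to $X$, so they commute with the $v_x$ in the obvious way: $v_x^* P_W = p_W v_x^*$ and $P_V v_y = v_y p_V$. Condition (3) enters only through the symmetric argument for $S^*T$, and hence for $TS$.
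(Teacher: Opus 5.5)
Your proof is correct and follows essentially the same route as the paper: identify the multiplier algebra with the idealiser of $C^*_u(X)\otimes\mathbb{K}$ in $\mathcal{B}(\ell^2(X,H))$, use condition~(2) to confine the range to some $\ell^2(X,W)$, invoke approximability, and handle the other side by passing to adjoints and using condition~(3). The differences are cosmetic. You test against the dense sets $\mathbb{C}[X,R,V]$ rather than the generators $b\otimes F$, and you make the approximation step explicit by compressing the approximants to $P_W S_n P_V$; this also verifies the two-sided condition $v_x^*(\cdot)v_y\in\mathcal{B}(W+V)$, which the paper leaves implicit.
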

	\begin{proof}
		Since $C^*_u(X) \otimes \mathbb{K}$ is a non-degenerate $C^*$-subalgebra of $\mathcal{B}(\ell^2(X,H))$, it follows from \cite[II 7.3.5]{blackadar:2006zz} that its multiplier algebra can be identified with the $C^*$-subalgebra of $\mathcal{B}(\ell^2(X,H))$ given by
		\begin{equation*}
			\begin{split}
				\mathcal{A} := 
				\bigl\{ \, \,
				T \in \mathcal{B}(\ell^2(X,H)) \,\big| & \,
				T(C^*_u(X) \otimes \mathbb{K}) \subset C^*_u(X) \otimes \mathbb{K} \\
				\text{ and } &\,
				(C^*_u(X) \otimes \mathbb{K})T \subset C^*_u(X) \otimes \mathbb{K}
				\,\, \bigr\}.
			\end{split}
		\end{equation*}
		Let $S \in \mathcal{B}(\ell^2(X, H))$ be an approximable operator that satisfies conditions~(2) and~(3) of Definition~\ref{definition: uniform cover}. The $C^*$-algebra $C^*_u(X) \otimes \mathbb{K}$ is generated by operators of the form $b \otimes F$, where $b \in C^*_u(X)$ and $F$ is a finite-rank operator. Let $W \subset H$ be a subspace such that $S$ restricts to a map
		\[
			\ell^2(X,\operatorname{im}(F)) \longrightarrow \ell^2(X,W).
		\]
		Then the operator $S(b \otimes F)$ restricts to a map
		\[
			S(b \otimes F) \colon
			\ell^2(X,H) \longrightarrow \ell^2(X,W),
		\]
		and is approximable. Hence, $S(b \otimes F)$ belongs to $C^*_u(X) \otimes \mathbb{K}$. Since conditions~(2) and (3) of Definition~\ref{definition: uniform cover} are stable under taking adjoints, the operator $(b \otimes F)S$ also belongs to $C^*_u(X) \otimes \mathbb{K}$. It follows that $S \in \mathcal{A}$.
	\end{proof}
	
	It is clear from Definition~\ref{definition: uniform cover} that if $S$ is a uniform cover for a coarse map $f \colon (X,d) \to (Y,\partial)$, and $g \colon (X,d) \to (Y,\partial)$ is a coarse map close to $f$, then $S$ is also a uniform cover for $g$. 
	
	For the case of coarse equivalences, the existence of covering unitaries was established in~\cite{brodzki2007property}. The following lemma is a refinement of~\cite[Theorem~4]{brodzki2007property}, it establishes the existence of a uniform cover for any uniformly finite-to-one coarse map.

	\begin{lemma} \label{lemma: uniform covers existence}
		Let $(X,d)$ and $(Y,\partial)$ be uniformly locally finite metric spaces, let $f \colon (X,d) \to (Y,\partial)$ be a uniformly finite-to-one coarse map, and let $H$ be a separable Hilbert space. Then there exists a uniform cover
		\[
			S \colon \ell^2(X,H) \to \ell^2(Y,H)
		\]
		of $f$. Moreover, if $f$ is a coarse equivalence, $S$ can be chosen to be unitary.
	\end{lemma}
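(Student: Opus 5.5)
Since $f$ is uniformly finite-to-one, I will choose $N \ge 1$ with $|f^{-1}(y)| \le N$ for all $y \in Y$. Fix an orthonormal basis $(e_k)_{k \ge 1}$ of $H$; the finite-dimensional case is similar but needs separate bookkeeping. The aim is an isometry that sends each fibre $\delta_x \otimes H$ into $\delta_{f(x)} \otimes H$ and interleaves basis vectors so that finite-dimensional subspaces stay finite-dimensional. Enumerate each fibre $f^{-1}(y) = \{x_1^y, \ldots, x_{m_y}^y\}$ with $m_y \le N$, and fix a bijection $\sigma \colon \{1,\ldots,N\} \times \mathbb{N} \to \mathbb{N}$, say $\sigma(j,k) = N(k-1)+j$. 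Define
\[
	S(\delta_{x_j^y} \otimes e_k) := \delta_y \otimes e_{\sigma(j,k)} .
\]
Distinct pairs $(x,k)$ go to distinct orthonormal vectors, so $S$ extends to an isometry. Its support lies in $\operatorname{Graph}(f)$, so condition~(1) holds with $R=0$.

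For conditions~(2) and~(3), I first reduce to subspaces spanned by basis vectors: any finite-dimensional $V$ is contained in some $V_n = \operatorname{span}\{e_1,\ldots,e_n\}$ only up to approximation, so I would instead replace the basis choice. Better: given $V$, I do not need $V \subset V_n$ exactly. Condition~(2) requires that $S$ map $\ell^2(X,V)$ into $\ell^2(Y,W)$. So I modify the construction to use a fixed unitary $u_j \colon H \to H_j$ onto mutually orthogonal infinite-dimensional subspaces $H_1,\ldots,H_N$ with $H = \bigoplus_j H_j$, where $u_j(e_k) = e_{\sigma(j,k)}$, and set $S(\delta_{x_j^y}\otimes w) = \delta_y \otimes u_j w$. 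Then $S\ell^2(X,V) \subset \ell^2(Y, W)$ with $W = \sum_{j\le N} u_j(V)$, which is finite-dimensional since $N$ is bounded; this gives~(2). For~(3), $S^*(\delta_y \otimes w) = \sum_j \delta_{x_j^y} \otimes u_j^* P_{H_j} w$, so $S^*$ maps $\ell^2(Y,W)$ into $\ell^2(X,V)$ with $V = \sum_j u_j^* P_{H_j}(W)$, again finite-dimensional.

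For a coarse equivalence $f$, I want a unitary, and the issue is that $S$ above misses the fibres over $Y \setminus f(X)$ as well as the unused $H_j$ for $j > m_y$. I would follow the strategy of \cite[Theorem~4]{brodzki2007property}. Since $f(X)$ is $C$-dense for some $C$, choose a map $r \colon Y \to f(X)$ with $\partial(r(y), y) \le C$. Its fibres are uniformly bounded by uniform local finiteness. Pick also a section $s \colon f(X) \to X$, and define $g \colon Y \to X$ by $g = s \circ r$; it is close to a coarse inverse. Then partition $X$ into the blocks $B_x := f^{-1}(f(x))$ indexed by $s(f(X))$, and $Y$ into the blocks $r^{-1}(y')$ for $y' \in f(X)$. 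Each block pair $\bigl(f^{-1}(y'), r^{-1}(y')\bigr)$ is finite with cardinalities bounded by constants $N, M$. I then define a unitary $\ell^2(f^{-1}(y'),H) \cong H^{a} \to H^{b} \cong \ell^2(r^{-1}(y'),H)$, with $a,b \le \max(N,M)$, using a single fixed unitary $H^a \to H^b$ built from basis interleavings as above. There are only finitely many pairs $(a,b)$, so this block-diagonal unitary satisfies~(2) and~(3) uniformly. Its support is contained in $\{(y,x) : r(y) = f(x)\}$, so $\partial(y,f(x)) \le C$, which gives~(1).

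The main obstacle is exactly this uniformity: conditions~(2) and~(3) demand one $W$ working for all points simultaneously. It is handled by using only finitely many fixed local unitaries, chosen once and for all among $H^a \to H^b$ with $a,b$ bounded. Everything else is routine.
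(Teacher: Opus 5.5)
Your proposal is correct and is essentially the paper's construction. You build a fibrewise interleaving isometry in which only finitely many fixed local maps occur, so that conditions (2) and (3) hold uniformly. For a coarse equivalence you build a block-diagonal unitary over the blocks $\bigl(f^{-1}(y'), r^{-1}(y')\bigr)$, $y' \in f(X)$, which is exactly the block structure of the paper's composite $U_p^* U_{f_0}$ with $r = p$.

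Two small repairs are needed. First, you must take $r$ to be the identity on $f(X)$, as the paper's left inverse $p$ is; otherwise some $r^{-1}(y')$ may be empty, and there is no unitary from $H^a \neq 0$ to $H^0 = 0$. Second, your aside that finite-dimensional $H$ is similar is false: for $H = \mathbb{C}^m$ with $m \ge 1$ and the constant map from a two-point space to a point, there is no isometry $H^2 \to H$. So the lemma requires $H$ to be infinite-dimensional, which the paper tacitly assumes by taking $H = \ell^2(\mathbb{N})$.
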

	\begin{proof}
		Without loss of generality, let $H = \ell^2(\mathbb{N})$. Since $f$ is uniformly finite-to-one, the quantity
		\[
			N := \sup_{y \in Y} \lvert f^{-1}(\{y\}) \rvert
		\]
		is finite. For $0 \le i \le N$, let $Y_i \subset Y$ denote the set of those $y \in Y$ whose preimage under $f$ consists of precisely $i$ distinct points. In this way, we obtain a partition
		\[
			Y = \bigsqcup_{i=0}^N Y_i.
		\]
		For each $1 \le i \le N$, fix a partition $\{A_k^i\}_{k=1}^i$ of $\mathbb{N}$ into $i$ pairwise disjoint subsets, each of which is in bijection with $\mathbb{N}$. Let $f_k^i \colon \mathbb{N} \to A_k^i$ be a bijection. For every $y \in \operatorname{im}(f)$, enumerate the finite set $f^{-1}(\{y\})$. Define an isometry $S$ on the canonical basis of $\ell^2(X) \otimes \ell^2(\mathbb{N})$ by setting
		\[
			S(\delta_{x_k} \otimes \delta_n) = \delta_y \otimes \delta_{f_k^i(n)},
		\]
		where $y \in Y_i$ for some $i \neq 0$ and $f(x_k) = y$. It is straightforward to verify that $\operatorname{supp}(S) = f$. Given $\zeta \in \ell^2(\mathbb{N})$, $y \in Y_i$, and $x_k \in f^{-1}(\{y\})$, one has
		\[
			S(\delta_{x_k} \otimes \zeta) = 
				\delta_y \otimes \zeta(k,i),
			\qquad
			\text{where}
			\quad 
			\zeta(k,i) := 
				\sum_{n \in \mathbb{N}} \langle \zeta, \delta_n \rangle 
				\,\delta_{f_k^i(n)}.
		\]
		Let $H_{\zeta}$ be the subspace of $\ell^2(\mathbb{N})$ generated by the vectors $\zeta(k,i)$ for $1 \le k \le i \le N$. This subspace is finite-dimensional, and
		\[
			S\bigl(\ell^2(X) \otimes \zeta\bigr) \subset \ell^2(Y) \otimes H_{\zeta}.
		\]
		Now let $H_0$ be any finite-dimensional subspace of $\ell^2(\mathbb{N})$, and let $H_1$ denote the subspace generated by $H_{\zeta}$ for all $\zeta \in H_0$. Then $H_1$ is finite-dimensional, and
		\[
			S\bigl(\ell^2(X) \otimes H_0\bigr) \subset \ell^2(Y) \otimes H_1.
		\]
		Analogously, for $\eta \in \ell^2(\mathbb{N})$ and $y_0 \in Y_i$ with $i \neq 0$, one has
		\[
			S^*(\delta_{y_0} \otimes \eta) = 
				\sum_{k=1}^i \delta_{x_k} \otimes \eta(k,i),
			\qquad 
			\text{where }
			\eta(k,i) := 
				\sum_{n \in N_k^i} \langle \eta, \delta_n \rangle 
				\, \delta_{(f_k^i)^{-1}(n)}.
		\]
		Moreover, for any $y \in Y_0$, one has $S^*(\delta_y \otimes \eta) = 0$. Let $H_{\eta}$ be the subspace of $\ell^2(\mathbb{N})$ generated by the vectors $\eta(k,i)$ for $1 \le k \le i \le N$. This subspace is finite-dimensional and
		\[
			S^*(\ell^2(Y) \otimes \eta) \subset \ell^2(X) \otimes H_{\eta}.
		\]
		For any finite-dimensional subspace $H_0 \subset \ell^2(\mathbb{N})$, let $H_1$ denote the subspace generated by $H_{\eta}$ for all $\eta \in H_0$. Then $H_1$ is finite-dimensional, and
		\[
			S^*(\ell^2(Y) \otimes H_0) \subset \ell^2(X) \otimes H_1.
		\]
		It follows that the constructed isometry $S$ satisfies the definition of a covering isometry for $f$. Note that if $f$ is surjective, then $S$ is surjective; in particular, $S$ is a unitary.

		Suppose that $f$ is a coarse equivalence. Then it can be decomposed into maps
		\[
			\begin{tikzcd}
				X \arrow{r}{f_0} & f(X) \arrow{r}{i} & Y,
			\end{tikzcd}
		\]
		where $f_0$ is a surjective coarse equivalence, and $i$ is the inclusion of a coarsely dense subset. Applying the construction of a covering isometry described above to the surjective coarse equivalence $f_0$, we obtain a unitary
		\[
			U_{f_0} \colon \ell^2(X) \otimes \ell^2(\mathbb{N}) \to \ell^2(f(X)) \otimes \ell^2(\mathbb{N}).
		\]
		Since $i \colon f(X) \to Y$ is the inclusion of a coarsely dense subset, there exists a left inverse $p \colon Y \to f(X)$, which is a surjective coarse equivalence. Applying the same construction to $p$ yields a covering unitary
		\[
			U_p \colon \ell^2(Y) \otimes \ell^2(\mathbb{N}) \to \ell^2(f(X)) \otimes \ell^2(\mathbb{N}).
		\]
		It is straightforward to verify that the unitary $U_p^* U_{f_0}$ satisfies Definition~\ref{definition: uniform cover}.
	\end{proof}
	
	Note that the proof of Lemma~\ref{lemma: uniform covers existence} involves a substantial number of choices; consequently, the resulting uniform cover is highly non-canonical. For instance, if $X$ is a singleton, then any isometry in $\mathcal{B}(H)$ covers $\operatorname{id}_{X}$. In the case of covering isometries for Roe algebras, this non-canonicity disappears at the level of $K$-theory. The following lemma shows that the same phenomenon holds for uniform covers.
	
	\begin{lemma} \label{lemma: K-theory of uniform covers}
		Let $(X,d)$ and $(Y,\partial)$ be uniformly locally finite metric spaces, let $f \colon (X,d) \to (Y,\partial)$ be a uniformly finite-to-one coarse map, and let $H$ be a separable Hilbert space. Let $S \colon \ell^2(X,H) \to \ell^2(Y,H)$ be a uniform cover of $f$. Then $\operatorname{Ad}_S$ restricts to a $*$-homomorphism
		\[
			\operatorname{Ad}_S \colon C_u^*(X) \otimes \mathbb{K}
				\longrightarrow C_u^*(Y) \otimes \mathbb{K}.
		\]
		Moreover, the induced map in $K$-theory does not depend on the choice of a uniform cover for $f$.

	\end{lemma}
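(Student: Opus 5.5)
We prove the two assertions separately: first that $\operatorname{Ad}_S$ maps the dense filtration of Proposition~\ref{proposition: dense subset of the stabilisation} into the corresponding filtration for $Y$, then that two uniform covers differ at the level of $K$-theory by an inner automorphism of a matrix amplification.

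\textbf{Step 1: $\operatorname{Ad}_S$ preserves the stabilised algebras.} By Proposition~\ref{proposition: dense subset of the stabilisation} and continuity of $\operatorname{Ad}_S$, it suffices to show that for each $R \ge 0$ and each finite-dimensional $V \subset H$ there exist $R' \ge 0$ and a finite-dimensional $W \subset H$ with $\operatorname{Ad}_S(\mathbb{C}[X,R,V]) \subset \mathbb{C}[Y,R',W]$.

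\emph{Propagation.} Let $T \in \mathbb{C}[X,R,V]$. Then
\[
\operatorname{supp}(STS^*) \subset \operatorname{supp}(S)\circ\operatorname{supp}(T)\circ\operatorname{supp}(S)^{-1}.
\]
By condition~(1), this set lies within distance $R_0$ of $\{(f(x),f(x')) \mid d(x,x')\le R\}$. Since $f$ is coarse, the propagation is bounded by some $R'$ depending only on $R$, $R_0$ and $f$.

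\emph{Fibrewise condition.} The condition $v_x^*Tv_{x'}\in\mathcal{B}(V)$ for all $x,x'$ means that $T$ maps $\ell^2(X,H)$ into $\ell^2(X,V)$ and vanishes on $\ell^2(X,V^\perp)$. Condition~(2) gives a finite-dimensional $W_1$ with $S(\ell^2(X,V))\subset\ell^2(Y,W_1)$. For $S^*$ we only have condition~(3), so we argue by duality. For any $W\subset H$, $S^*$ maps $\ell^2(Y,W)$ into $\ell^2(X,V')$ for some $V'$. Taking adjoints, $S$ maps $\ell^2(X,V'^\perp)$ into $\ell^2(Y,W^\perp)$.

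We want $S^*$ to kill $\ell^2(Y,W^\perp)$ modulo $\ell^2(X,V^\perp)$, which is equivalent to $S(\ell^2(X,V))\subset \ell^2(Y,W)$. Taking $W \supseteq W_1$ gives exactly this, so $TS^*$ vanishes on $\ell^2(Y,W_1^\perp)$. Hence $STS^*$ maps into $\ell^2(Y,W_1)$ and vanishes on $\ell^2(Y,W_1^\perp)$, so $STS^* \in \mathbb{C}[Y,R',W_1]$. This step is routine.

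\textbf{Step 2: independence of the choice of uniform cover.} Let $S_1, S_2$ be two uniform covers of $f$. Form the operator
\[
U=\begin{pmatrix} S_2S_1^* & 1-S_2S_2^*\\ 1-S_1S_1^* & S_1S_2^*\end{pmatrix}
\]
on $\ell^2(Y,H)^{\oplus 2}$; by the standard rotation trick this is a unitary. We then have
\[
U\begin{pmatrix}\operatorname{Ad}_{S_1}(a)&0\\0&0\end{pmatrix}U^*=\begin{pmatrix}\operatorname{Ad}_{S_2}(a)&0\\0&0\end{pmatrix}.
\]

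We check that $U$ lies in the multiplier algebra of $M_2(C^*_u(Y)\otimes\mathbb{K})$ via Lemma~\ref{lemma: multiplier algebra}, which requires each entry to be approximable and to satisfy conditions~(2) and~(3).

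\emph{Approximability.} By condition~(1), $\operatorname{supp}(S_2S_1^*)$ lies near $\{(f(x),f(x))\}$, so it is an entourage. The operators $S_iS_i^*$ likewise have finite propagation, so every entry has controlled propagation.

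\emph{Conditions~(2) and~(3).} These are closed under products, adjoints and sums with the identity, since the identity preserves every $\ell^2(Y,W)$.

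Conjugation by a unitary multiplier induces the identity on $K$-theory. This holds because the unitary group of the multiplier algebra of a stable algebra is connected (Kuiper/Mingo), or alternatively because the usual inner-automorphism argument works for multipliers. Hence $K_*(\operatorname{Ad}_{S_1}) = K_*(\operatorname{Ad}_{S_2})$.

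\textbf{Main obstacle.} The delicate point is the last one: conjugation by a multiplier unitary acts trivially on $K$-theory. I would handle it directly, without invoking connectedness. For a projection $p$ in the algebra, the element $w=U\,\mathrm{diag}(S_1,0)$ is a partial isometry with $w^*w=\mathrm{diag}(S_1^*S_1,0)$. Consider $w\,\mathrm{diag}(p,0) = \mathrm{diag}(S_2,0)\,\mathrm{diag}(p,0)$, which lies in the algebra itself because $p$ is in the ideal. This gives a Murray--von Neumann equivalence
\[
\operatorname{Ad}_{S_1}(p)\sim p\sim \operatorname{Ad}_{S_2}(p),
\]
implemented by the partial isometries $S_ip$. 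To make this legitimate, $S_ip$ must lie in $C^*_u$-type algebras between $X$ and $Y$, so I would place everything in the stabilised algebra of $X\sqcup Y$ equipped with a metric for which $f$ is coarse. This reduces the claim to the standard fact that isometries in the multiplier algebra give equal $K$-theory maps.
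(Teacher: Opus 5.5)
Your proof is correct and is essentially the paper's: Step~1 is the same propagation estimate plus condition~(2) (the detour through condition~(3) is never actually used), and Step~2 uses the same $2\times 2$ rotation unitary built from $S_1,S_2$, placed in the multiplier algebra via Lemma~\ref{lemma: multiplier algebra}; the only cosmetic difference is that your $U$ keeps both images in the $(1,1)$-corner, whereas the paper's sends $e_{11}\otimes\operatorname{Ad}_{S_1}$ to $e_{22}\otimes\operatorname{Ad}_{S_2}$. The closing ``main obstacle'' paragraph is unnecessary and slightly wrong as written, because $\operatorname{diag}(S_2p,0)$ maps $\ell^2(X,H)^{\oplus 2}$ to $\ell^2(Y,H)^{\oplus 2}$ and so does not lie in $M_2(C^*_u(Y)\otimes\mathbb{K})$; the direct argument you want is that for the projection $q=\operatorname{diag}(\operatorname{Ad}_{S_1}(p),0)$, the partial isometry $Uq$ lies in the ideal $M_2(C^*_u(Y)\otimes\mathbb{K})$ and implements $q\sim UqU^*$, with no need to pass to $X\sqcup Y$.
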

	\begin{proof}
		Let $f \colon (X,d) \to (Y,\partial)$ be a uniformly finite-to-one coarse map, and let $S \colon \ell^2(X,H) \to \ell^2(Y,H)$ be a uniform cover of $f$. Fix $R \ge 0$ and a finite-dimensional subspace $H_0 \subset H$. Since $f$ is coarse, there exists $R_0 \ge 0$ such that for any $x,y \in X$ satisfying $d(x,y) \le R$, one has $d(f(x),f(y)) \le R_0$. By the definition of a uniform cover, there exists $R_1 \ge 0$ such that
		\[
			\|\mathbbm{1}_{y} S \mathbbm{1}_{x}\| \neq 0
				\;\Longrightarrow\;
				d(y,f(x)) \le R_1.
		\]
		In particular, for any operator $T \in \mathcal{B}(\ell^2(X,H))$ of propagation at most $R$, the operator $STS^*$ has propagation at most $R_0 + 2R_1$. Since $S$ satisfies condition~$(2)$ of Definition~\ref{definition: uniform cover}, there exists a finite-dimensional subspace $H_1 \subset H$ such that $S$ restricts to an isometry from $\ell^2(X,H_0)$ to $\ell^2(Y,H_1)$. It follows that for any $T \in \mathcal{B}(\ell^2(X, H_0))$ the operator $STS^*$ belongs to $\mathcal{B}(\ell^2(Y,H_1))$. In particular, $\operatorname{Ad}_S$ restricts to a map
		\[
			\operatorname{Ad}_S \colon
				\mathbb{C}[X,R,H_0] 
				\longrightarrow
				\mathbb{C}[Y,R_0 + 2R_1,H_1].
		\]
		By passing to closures, it follows that $\operatorname{Ad}_S$ restricts to a $*$-homomorphism between stabilisations of uniform Roe algebras. 
		
		For the $K$-theory part, let $S_1$ and $S_2$ be two uniform covers of $f$. For convenience, set $A = M_2(\mathbb{C}) \otimes C_u^*(Y) \otimes \mathbb{K}$. For $i = 1,2$, consider the maps
		\[
			\alpha_i \colon C_u^*(X) \otimes \mathbb{K} \longrightarrow A,
			\qquad
			\alpha_i(T) = e_{ii} \otimes \operatorname{Ad}_{S_i}(T).
		\]
		It suffices to show that $\alpha_1$ and $\alpha_2$ agree in $K$-theory. The projections $S_1 S_1^*$ and $S_2 S_2^*$, as well as the partial isometries $S_1 S_2^*$ and $S_2 S_1^*$, satisfy conditions of Lemma~\ref{lemma: multiplier algebra}; therefore, they belong to the multiplier algebra of $C_u^*(Y) \otimes \mathbb{K}$. Consider the unitary
		\[
			U =
				\begin{pmatrix}
					1 - S_1 S_1^* & S_1 S_2^* \\
					S_2 S_1^* & 1 - S_2 S_2^*
				\end{pmatrix}
		\]
		in the multiplier algebra of $A$. It follows that $\operatorname{Ad}_U$ induces the identity map on $K$-theory and intertwines $\alpha_1$ and $\alpha_2$. Therefore, $K_*(\alpha_1) = K_*(\alpha_2)$.
	\end{proof}
	
	In light of Lemma~\ref{lemma: K-theory of uniform covers}, for a uniformly finite-to-one coarse map $f$, we denote by $K_*(f)$ the map $K_*(\operatorname{Ad}_S)$ for some (any) uniform cover $S$. Since uniform covers depend only on the closeness class of a coarse map, the homomorphism $K_*(f)$ depends only on the closeness class of $f$. It follows that, for $i = 1,2$, there are well-defined functors
	\[
		K_i \colon \mathbf{Coarse} \longrightarrow \mathbf{Ab},
			\qquad
		(X,d) \longmapsto K_i\bigl(C_u^*(X)\bigr),
			\qquad
		f \longmapsto K_i(f),
	\]
	from the coarse category to the category of abelian groups. Another family of functors from the coarse category to abelian groups is given by uniformly finite homology (see Definition~\ref{definition: uf-homology}). Recall from Definition~\ref{definition: comparison map} that there is a map
	\[
		\alpha_0 \colon H_0^{\mathrm{uf}}(X;\mathbb{Z})
		\longrightarrow K_0\bigl(C_u^*(X)\bigr),
	\]
	called the $0$th comparison map. The following lemma asserts that $\alpha_0$ is a natural transformation from $0$th uniformly finite homology to $K_0$.
	
	\begin{lemma} \label{lemma: naturality of alpha0}
		Let $(X,d)$ and $(Y,\partial)$ be uniformly locally finite metric spaces, and denote by $\alpha_0^X$ and $\alpha_0^Y$ the associated $0$th comparison maps. Let $f \colon (X,d) \to (Y,\partial)$ be a uniformly finite-to-one coarse map. Then the following diagram commutes:
		\[
			\begin{tikzcd}
				H_0^{\mathrm{uf}}(X;\mathbb{Z})
					\arrow{r}{\alpha_0^X}
					\arrow[swap]{d}{H_0^{\mathrm{uf}}(f)} &
				K_0(C_u^*(X))
					\arrow{d}{K_0(f)}
				\\
				H_0^{\mathrm{uf}}(Y;\mathbb{Z})
					\arrow{r}{\alpha_0^Y} &
				K_0(C_u^*(Y)).
			\end{tikzcd}
		\]
	\end{lemma}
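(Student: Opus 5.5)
Since $H_0^{\mathrm{uf}}(X;\mathbb{Z})$ is a quotient of $C_0^{\mathrm{uf}}(X;\mathbb{Z}) = \ell^\infty(X,\mathbb{Z})$, it suffices to check commutativity on chains. Concretely, for $h \in \ell^\infty(X,\mathbb{Z})$ we aim to show $K_0(f)\bigl(K_0(i_X)(h)\bigr) = K_0(i_Y)(f_* h)$ in $K_0(C^*_u(Y))$, where $f_*h(y)=\sum_{f(x)=y}h(x)$. Both sides are additive in $h$, and every bounded integer-valued function is a finite $\mathbb{Z}$-combination of indicator functions $\mathbbm{1}_A$ of subsets $A\subset X$ (decompose by level sets, of which there are finitely many). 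So we reduce to $h=\mathbbm{1}_A$.

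Here $K_0(C^*_u(X))$ is identified with $K_0(C^*_u(X)\otimes\mathbb{K})$ via the corner embedding $T\mapsto T\otimes p$, where $p$ is a fixed rank-one projection onto $\mathbb{C}\xi\subset H$. Under this identification, $K_0(i_X)(\mathbbm{1}_A)$ is the class of $\mathbbm{1}_A\otimes p$. We then choose a convenient uniform cover, which Lemma~\ref{lemma: K-theory of uniform covers} permits. We take $S$ to be the explicit isometry from the proof of Lemma~\ref{lemma: uniform covers existence}, with $H=\ell^2(\mathbb{N})$ and $\xi=\delta_1$.

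Then $S(\mathbbm{1}_A\otimes p)S^*$ is the projection onto the span of the vectors $\delta_{f(x)}\otimes\delta_{f^i_k(1)}$ for $x=x_k\in A$. For each $y$, the indices $f^i_k(1)$ for distinct $k$ are distinct basis vectors of $\ell^2(\mathbb{N})$. Hence this projection is $Q=\sum_{y}\mathbbm{1}_y\otimes q_y$, where $q_y$ is a diagonal projection of rank $n_y:=|f^{-1}(y)\cap A| = f_*\mathbbm{1}_A(y)$, with $n_y\le N$. Since only finitely many rank-$\le N$ diagonal projections onto spans of the $\delta_{f^i_k(1)}$ occur, all of these live inside a fixed finite-dimensional subspace $W\subset H$.

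It remains to show that $[Q]=\bigl[\sum_y \mathbbm{1}_y\otimes p_{n_y}\bigr]$, where $p_n$ is the projection onto $\operatorname{span}\{\delta_1,\dots,\delta_n\}$. The latter is $\sum_{j=1}^N \mathbbm{1}_{\{n_y\ge j\}}\otimes e_{jj}$, whose class is $\sum_j[\mathbbm{1}_{\{f_*\mathbbm{1}_A\ge j\}}] = K_0(i_Y)(f_*\mathbbm{1}_A)$ by the standard identification $K_0$ of matrix amplifications. The equivalence is implemented by the partial isometry $v=\sum_y \mathbbm{1}_y\otimes u_y$, where $u_y$ maps the chosen basis vectors of $q_y$ onto $\delta_1,\dots,\delta_{n_y}$ in order. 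This $v$ has propagation zero. Its fibre operators lie in $\mathcal{B}(W')$ for the finite-dimensional $W'=W+\operatorname{span}\{\delta_1,\dots,\delta_N\}$. By Proposition~\ref{proposition: dense subset of the stabilisation} and the remark after it, $v$ lies in $C^*_u(Y)\otimes\mathbb{K}$, and it satisfies $v^*v=Q$ and $vv^*=\sum_y\mathbbm{1}_y\otimes p_{n_y}$, which gives Murray--von Neumann equivalence.

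The main obstacle is the uniformity in this last step. A fibrewise equivalence is not enough on its own: the equivalence must lie in the stabilised uniform Roe algebra, and not merely in $\ell^\infty(Y,\mathcal{B}(H))$ at arbitrary dimension. This is exactly where the uniform finiteness bound $N$ and condition (2) of Definition~\ref{definition: uniform cover} are used.
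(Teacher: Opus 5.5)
Your proof is correct, and it follows a different route from the paper's.

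\textbf{The paper's route.} The paper first treats \emph{injective} coarse maps, using the cover $S\otimes\operatorname{id}_H$ with $S\delta_x=\delta_{f(x)}$. Then $\operatorname{Ad}_S$ maps $\ell^\infty(X)$ into $\ell^\infty(Y)$ with $\operatorname{Ad}_S(\mathbbm{1}_A)=\mathbbm{1}_{f(A)}=f_*\mathbbm{1}_A$, so the square already commutes at the level of $K_0(\ell^\infty)$. The general case is reduced to this one by lifting $f$ to an injective map $\tilde f\colon X\to Y^{(n)}$ close to $j\circ f$, where $j\colon Y\to Y^{(n)}$ is the inclusion onto the first sheet. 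The paper then uses functoriality and closeness invariance of $H_0^{\mathrm{uf}}$ and $K_0$ on $\mathbf{Coarse}$, together with the fact that $H_0^{\mathrm{uf}}(j)$ and $K_0(j)$ are isomorphisms.

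\textbf{Your route.} You compute directly with the non-injective explicit cover from Lemma~\ref{lemma: uniform covers existence}. The multiplicity $n_y=f_*\mathbbm{1}_A(y)$ appears as a fibre projection $q_y$ of rank $n_y$ inside the fixed finite-dimensional space $W$. Your propagation-zero partial isometry $v$ then straightens this to $\sum_j \mathbbm{1}_{\{n_y\ge j\}}\otimes e_{jj}$. In effect, the orthonormal vectors $\delta_{f^i_k(1)}$ in the fibre over $y$ play the role of the sheets of the doubling, reflecting $C^*_u(Y^{(n)})\cong C^*_u(Y)\otimes M_n(\mathbb{C})$.

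\textbf{What each buys.} The paper's argument is more modular: the only concrete computation takes place on the diagonal, where nothing needs to be straightened. However, it silently uses $K_0(j\circ f)=K_0(j)\circ K_0(f)$, i.e.\ that a composition of uniform covers is a uniform cover of the composite; this is true but not proved in the paper. Your argument is self-contained and needs neither functoriality nor closeness invariance. The price is the explicit uniform Murray--von Neumann equivalence, and you correctly identify that this is where the bound $N$ and the finite-dimensionality of $W'$ are essential.
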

	\begin{proof}
		Suppose that $f \colon (X,d) \to (Y, \partial)$ is an injective coarse map. Define an isometry
		\[
			S \colon \ell^2(X) \to \ell^2(Y), 
			\qquad 
			S(\delta_x) := \delta_{f(x)}.
		\]
		By definition, $\widetilde{S} := S \otimes \operatorname{id}_H$ is a uniform cover of $f$; therefore, the maps $K_0(f)$ and $K_0(\operatorname{Ad}_S)$ coincide. Moreover, the $*$-homomorphism $\operatorname{Ad}_S$ restricts to a $*$-homomorphism between the canonical Cartan subalgebras $\ell^{\infty}(X)$ and $\ell^{\infty}(Y)$ of $C^*_u(X)$ and $C^*_u(Y)$, respectively. In particular, the following diagram commutes:
		\[
			\begin{tikzcd}
				K_0(\ell^{\infty}(X)) \arrow{r}{K_0(i_X)} \arrow[swap]{d}{K_0(\operatorname{Ad}_S)}
				& K_0(C^*_u(X)) \arrow{d}{K_0(f)} \\
				K_0(\ell^{\infty}(Y)) \arrow{r}{K_0(i_Y)}
				& K_0(C^*_u(Y)),
			\end{tikzcd}
		\]
		where $i_X$ and $i_Y$ denote the canonical inclusions. For a uniformly locally finite metric space $(Z,d_Z)$, let $\pi_Z$ be the canonical projection from $K_0(\ell^{\infty}(Z))$ onto $H_0^{\mathrm{uf}}(Z; \mathbb{Z})$. Since $f$ is injective, a direct computation shows that, for any subset $A \subseteq X$, one has $\operatorname{Ad}_S(\mathbbm{1}_A) = \mathbbm{1}_{f(A)}$ and $f_*(\mathbbm{1}_A) = \mathbbm{1}_{f(A)}$. The following diagram commutes:
		\[
			\begin{tikzcd}
				K_0(\ell^{\infty}(X)) \arrow{r}{\pi_X} \arrow[swap]{d}{K_0(\operatorname{Ad}_S)}
				& H_0^{\mathrm{uf}}(X; \mathbb{Z}) \arrow{d}{H_0^{\mathrm{uf}}(f)} \\
				K_0(\ell^{\infty}(Y)) \arrow{r}{\pi_Y}
				& H_0^{\mathrm{uf}}(Y; \mathbb{Z}).
			\end{tikzcd}
		\]
		Finally, since the map on $K_0$ induced by the inclusion $\ell^{\infty}(X) \hookrightarrow C^*_u(X)$ factors through the $0$th comparison map, the diagram appearing in the statement commutes.  
		
		For the general case, let $f \colon (X,d) \to (Y,\partial)$ is a uniformly finite-to-one coarse map, and set
		\[
			n := \sup_{y \in Y} \lvert f^{-1}(y) \rvert .
		\]
		Let $Y^{(n)}$ denote the $n$th doubling of $Y$, and let $j \colon Y \to Y^{(n)}$ be the inlcusion of $Y$ onto $Y \times \{1\} \subset Y^{(n)}$. The inclusion $j$ is a coarse equivalence. For every $y \in Y$ whose preimage has $i \neq 0$ points, fix an enumeration $x_1^y, \ldots, x_i^y$ of $f^{-1}(\{y\})$. Define a map
		\[
			\tilde{f} \colon X \to Y^{(n)}, \qquad x_i^{f(x)} \mapsto (f(x), i).
		\]
		It is straightforward to verify that $\tilde{f}$ is injective, and it is close to $j \circ f$. In particular, one has the equalities
		\[
			H_0^{\mathrm{uf}}(\tilde{f}) 
				= H_0^{\mathrm{uf}}(j_1) \circ H_0^{\mathrm{uf}}(f),
			\qquad
			K_0(\tilde{f})
				= K_0(j_1) \circ K_0(f).
		\]
		Since $\tilde{f}$ and $j$ are injective, the diagrams appearing in the statement commute for $\tilde{f}$ and $j$ by the first part of the proof. Since $j$ is a coarse equivalence, the maps $H_0^{\mathrm{uf}}(j)$ and $K_0(j)$ are isomorphisms. The following diagram commutes:
		\[
			\begin{tikzcd}
				H_0^{\mathrm{uf}}(X; \mathbb{Z})
  					\arrow[swap]{dd}{H_0^{\mathrm{uf}}(f)}
  					\arrow{rrr}{\alpha_0^X}
  					\arrow{dr}{H_0^{\mathrm{uf}}(\tilde{f})}
					&&&
				K_0(C^*_u(X))
  					\arrow[swap]{dl}{K_0(\tilde{f})}
  					\arrow{dd}{K_0(f)}
				\\
				& H_0^{\mathrm{uf}}(Y^{(n)}; \mathbb{Z})
    				\arrow{ld}{H_0^{\mathrm{uf}}(j_1)^{-1}}
    				\arrow{r}{\alpha_0^{Y^{(n)}}}
				& K_0(C^*_u(Y^{(n)}))
    				\arrow[swap]{dr}{K_0(j_1)^{-1}}
				\\
				H_0^{\mathrm{uf}}(Y; \mathbb{Z})
 	 				\arrow{rrr}{\alpha_0^Y}
					&&&
				K_0(C^*_u(Y)).
			\end{tikzcd}
		\]
		In particular, the diagram appearing in the statement commutes.
	\end{proof}

	There is a more conceptual explanation of Lemma~\ref{lemma: naturality of alpha0}, which we briefly outline without entering into technical details. Recall from Section~\ref{subsection: coarse geometry} that to each uniformly locally finite metric space $(X,d)$ one can associate a principal ample groupoid $\mathcal{G}(X,d)$, called the \emph{coarse groupoid} of $(X,d)$. Following Proposition~2.3 of~\cite{skandalis2002coarse}, any uniformly finite-to-one coarse map
	\[
		f \colon (X,d) \longrightarrow (Y,\partial)
	\]
	induces a uniformly locally finite coarse structure $\mathcal{E}_f$ on the disjoint union $X \sqcup Y$, whose restriction to $Y$ coincides with the original coarse structure of $Y$, and whose restriction to $X$ contains the original coarse structure of $X$. Consequently, at the level of coarse groupoids, one obtains \'etale groupoids homomorphisms
	\[
		\begin{tikzcd}
			\mathcal{G}(X,d) \arrow{r}{i_X} &
			\mathcal{G}(X \sqcup Y, \mathcal{E}_f) &
			\arrow[swap]{l}{i_Y} \mathcal{G}(Y,\partial),
		\end{tikzcd}
	\]
	where the right-hand arrow is a Morita equivalence of groupoids. Passing to groupoid homology or to the $K$-theory of reduced groupoid $C^*$-algebras yields group homomorphisms
	\[
		F(f) \colon F(\mathcal{G}(X,d)) \longrightarrow F(\mathcal{G}(Y,\partial)), 
		\qquad
		F(f) = F(i_Y)^{-1} \circ F(i_X),
	\]
	where $F$ denotes either groupoid homology or the $K$-theory of reduced groupoid $C^*$-algebras. The $0$th comparison map for ample groupoids is a natural transformation; therefore, the following diagram commutes:
	\[
		\begin{tikzcd}
			H_0(\mathcal{G}(X,d);\mathbb{Z})
				\arrow{r}{\alpha_0}
				\arrow[swap]{d}{H_0(f)} &
			K_0(C_r^*(\mathcal{G}(X,d)))
				\arrow{d}{K_0(f)}
			\\
			H_0(\mathcal{G}(Y,\partial);\mathbb{Z})
				\arrow{r}{\alpha_0} &
			K_0(C_r^*(\mathcal{G}(Y,\partial))).
		\end{tikzcd}
	\]
	Moreover, by~\cite[Theorem G]{bonicke2023dynamic}, there is a natural isomorphism between the homology of coarse groupoids and uniformly finite homology. As the uniform Roe algebra is naturally isomorphic to the reduced groupoid $C^*$-algebra of the associated coarse groupoid, the commutativity of the diagram in Lemma~\ref{lemma: naturality of alpha0} follows.

	\section{Rigidity of uniform Roe algebras versus injectivity of $\alpha_0$} \label{section: rigidity vs injectivity}
	
	In this section, we establish a close connection between the injectivity of the $0$th comparison map for coarse groupoids and the bijective rigidity problem. In contrast to arguments involving a single space, our approach necessarily requires the consideration of subspaces and doublings.

	\begin{definition} \label{definition: admissible class}
		A collection $\mathcal{C}$ of uniformly locally finite metric spaces is called \emph{admissible} if it is closed under taking arbitrary doublings and subspaces.
	\end{definition}
	
	For example, let $\mathcal{C}(\mathrm{A})$ denote the collection of all uniformly locally finite metric spaces that satisfy Property~A, and let $\mathcal{C}(\mathrm{CE})$ denote the collection of all uniformly locally finite metric spaces that admit a coarse embedding into a Hilbert space. Since both properties are preserved under taking subspaces and under passage to coarsely equivalent spaces, the collections $\mathcal{C}(\mathrm{A})$ and $\mathcal{C}(\mathrm{CE})$ are admissible. Clearly, the collection of all uniformly locally finite metric spaces is also admissible. The following lemma is the main result of this section.
	
	\begin{lemma}[Lemma~A]\label{Body: lemma: A}
		Let $\mathcal{C}$ be an admissible collection of uniformly locally finite metric spaces. The following statements are equivalent:
		\begin{enumerate}
			\item For every uniformly locally finite metric space $(X,d)$ in $\mathcal{C}$, the $0$th comparison map $\alpha_0 \colon H^{\mathrm{uf}}_0(X; \mathbb{Z}) \to K_0\bigl(C^*_u(X)\bigr)$ is injective;
			\item For any uniformly locally finite metric spaces $(X,d)$ and $(Y,\partial)$ in $\mathcal{C}$ such that $C^*_u(Y) \cong C^*_u(X)$, the coarse equivalence given by Theorem~\ref{theorem: rigidity theorem} is close to a bijective coarse equivalence.
		\end{enumerate}
	\end{lemma}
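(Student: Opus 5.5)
The plan is to prove the two implications separately. For $(1)\Rightarrow(2)$ only the injectivity of $\alpha_0^Y$ is needed. For $(2)\Rightarrow(1)$ we apply $(2)$ to subspaces of doublings of $X$, which lie in $\mathcal{C}$ by admissibility.

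\emph{Step 1: $K_0(\Phi)=K_0(f_\Phi)$.} Let $\Phi\colon C^*_u(X)\to C^*_u(Y)$ be an isomorphism, implemented by a unitary $U$, and let $f_\Phi$ be the induced coarse equivalence. Since $f_\Phi$ is a coarse equivalence, Lemma~\ref{lemma: uniform covers existence} gives a \emph{unitary} uniform cover $S\colon \ell^2(X,H)\to\ell^2(Y,H)$ of $f_\Phi$. The operator $U\otimes\operatorname{id}_H$ commutes with $1\otimes\mathcal{B}(H)$. Hence it maps $\ell^2(X,V)$ into $\ell^2(Y,V)$, and its adjoint behaves likewise, so conditions~(2) and~(3) of Definition~\ref{definition: uniform cover} hold trivially for it.

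Consider $W:=S^*(U\otimes\operatorname{id}_H)$. By Theorem~\ref{theorem: norm limit of unitaries supported on f_Phi}, $U\otimes \operatorname{id}_H$ is a norm limit of unitaries supported near $\operatorname{Graph}(f_\Phi)$. Composing these with $S^*$, whose support is also near the graph, gives operators of controlled propagation, so $W$ is approximable. It also satisfies~(2) and~(3), because both factors do. By Lemma~\ref{lemma: multiplier algebra}, $W$ is therefore a unitary in the multiplier algebra of $C^*_u(X)\otimes\mathbb{K}$.

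Then $\operatorname{Ad}_{U\otimes 1}=\operatorname{Ad}_S\circ\operatorname{Ad}_W$. Conjugation by a multiplier unitary acts trivially on $K$-theory (rotation trick, as in the proof of Lemma~\ref{lemma: K-theory of uniform covers}). After identifying $K_0(\Phi)$ with $K_0(\Phi\otimes\operatorname{id}_{\mathbb{K}})=K_0(\operatorname{Ad}_{U\otimes 1})$, we get $K_0(\Phi)=K_0(f_\Phi)$.

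\emph{Step 2: conclude $(1)\Rightarrow(2)$.} Using Step~1 and Lemma~\ref{lemma: naturality of alpha0},
\[
\alpha_0^Y([\mathbbm{1}_Y])=[1]=K_0(\Phi)[1]=K_0(f_\Phi)\,\alpha_0^X([\mathbbm{1}_X])=\alpha_0^Y\bigl(H_0^{\mathrm{uf}}(f_\Phi)[\mathbbm{1}_X]\bigr).
\]
Injectivity of $\alpha_0^Y$ gives $H_0^{\mathrm{uf}}(f_\Phi)[\mathbbm{1}_X]=[\mathbbm{1}_Y]$. Theorem~\ref{theorem: Block-Weinberger-Whyte} then yields a bijective coarse equivalence close to $f_\Phi$.

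\emph{Step 3: setup for $(2)\Rightarrow(1)$.} Take a class $[g]\in\ker\alpha_0^X$ with $g\in\ell^\infty(X,\mathbb{Z})$. Write $g=h_1-h_2$ with $h_1,h_2\ge 1$ bounded, for instance by adding a large constant to both parts. For $h\ge1$ bounded by $n$, set
\[
X(h)=\{(x,i)\mid 1\le i\le h(x)\}\subset X^{(n)},
\]
which lies in $\mathcal{C}$. Its uniform Roe algebra is the corner $p_hM_n(C^*_u(X))p_h$, where $p_h=\mathbbm{1}_{X(h)}$ and $[p_h]=\alpha_0^X([h])$.

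Since $[p_{h_1}]=[p_{h_2}]$ in $K_0$ of a unital algebra, the projections $p_{h_1}\oplus 1_m$ and $p_{h_2}\oplus 1_m$ are Murray--von Neumann equivalent for some $m$. Replacing $h_i$ by $h_i+m$ leaves $h_1-h_2$ unchanged. So we may assume there is a partial isometry $v\in C^*_u(X^{(N)})$ with $v^*v=p_{h_1}$ and $vv^*=p_{h_2}$. Then $\operatorname{Ad}_v$ is an isomorphism $C^*_u(X(h_1))\to C^*_u(X(h_2))$.

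\emph{Step 4: conclude $(2)\Rightarrow(1)$.} By~(2), the induced coarse equivalence $f$ is close to a bijection $\tilde f\colon X(h_1)\to X(h_2)$. The key point, and the step I expect to be the main obstacle, is to show that $f$ is close to the identity at the level of $X$. The implementing unitary is essentially $v$, which lies in $C^*_u(X^{(N)})$ and hence is approximable, so the sets $f^U_{\delta,R,S}$ should lie within bounded distance of the diagonal. If this works, then $p\circ\tilde f$ is close to $p$, where $p\colon X^{(N)}\to X$ is the projection.

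Now push $\operatorname{Graph}(\tilde f)$ down via $p\times p$. This gives a bounded, controlled-propagation chain $\delta\in C_1^{\mathrm{uf}}(X;\mathbb{Z})$. Because $\tilde f$ is a bijection,
\[
\partial_1\delta=p_*\mathbbm{1}_{X(h_2)}-p_*\mathbbm{1}_{X(h_1)}=h_2-h_1,
\]
up to the sign convention for the graph. Hence $[g]=0$, and $\alpha_0^X$ is injective.
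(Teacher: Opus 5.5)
Your proposal is correct and follows the paper's proof. Steps 1--2 are the paper's argument that $K_*(\Phi)=K_*(f_\Phi)$ (via a unitary uniform cover and the multiplier unitary $S^*(U\otimes\operatorname{id}_H)$), combined with naturality of $\alpha_0$ and Whyte's theorem; Steps 3--4 are the paper's converse, with your $X(h_1+m)$ and $X(h_2+m)$ playing the role of $A\sqcup X^{(n+\ell)}$ and $B\sqcup X^{(n+\ell)}$ (indeed your $X(h)$ construction is exactly the one sketched in the paper's introduction). The step you flag as the main obstacle is settled as you suggest: take a finite-propagation $t$ with $\|v-t\|<\varepsilon<\delta$; then every block with $\|\mathbbm{1}_C v\mathbbm{1}_D\|>\delta$ satisfies $\mathbbm{1}_C t\mathbbm{1}_D\neq 0$, so the relation defining $f_\Phi$ lies in a bounded thickening of $\operatorname{supp}(t)$, which is an entourage.
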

	
	Note that, for ample groupoids, the injectivity of $\alpha_0$ is invariant under Morita equivalence; hence, the injectivity of $\alpha_0$ is preserved by passing to coarsely equivalent spaces. In particular, the $0$th comparison map for $(X,d)$ is injective if and only if it is injective for any (some) doubling of $(X,d)$. On the other hand, it is not evident whether injectivity passes to subspaces. Before proceeding with the proof of Lemma \ref{Body: lemma: A}, we need to establish a connection between the $K$-theory of an isomorphism $\Phi \colon C^*_u(X) \to C^*_u(Y)$, and the $K$-theory of the coarse equivalence $f_{\Phi}$ induced by Theorem~\ref{theorem: rigidity theorem}.

	\begin{proposition}\label{proposition: HK-square for isomorphisms of uniform Roe algebras}
		Let $(X,d)$ and $(Y,\partial)$ be uniformly locally finite metric spaces, and let $\Phi \colon C^*_u(X) \to C^*_u(Y)$ be an isomorphism. Denote by $f_{\Phi} \colon (X,d) \to (Y,\partial)$ the coarse equivalence provided by Theorem~\ref{theorem: rigidity theorem}. Then the induced maps in $K$-theory, $K_*(f_{\Phi})$ and $K_*(\Phi)$, coincide.
	\end{proposition}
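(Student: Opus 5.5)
Since $K_*$ is stable, $K_*(\Phi)$ coincides with the map induced by the stabilisation $\Phi \otimes \operatorname{id}_{\mathbb{K}} \colon C^*_u(X) \otimes \mathbb{K} \to C^*_u(Y) \otimes \mathbb{K}$, under the usual identification $K_*(A) \cong K_*(A \otimes \mathbb{K})$ via a rank-one corner. We use the spatial implementation $\Phi = \operatorname{Ad}_U$ from step (1) of the strategy recalled above, with $U \colon \ell^2(X) \to \ell^2(Y)$ unitary. Then $\Phi \otimes \operatorname{id}_{\mathbb{K}} = \operatorname{Ad}_{\widetilde U}$, where $\widetilde U := U \otimes \operatorname{id}_H$ and $H$ is separable and infinite-dimensional. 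So it suffices to compare $K_*(\operatorname{Ad}_{\widetilde U})$ with $K_*(\operatorname{Ad}_S)$ for some uniform cover $S$ of $f_\Phi$. Lemma~\ref{lemma: uniform covers existence} provides such an $S$, and it may be taken unitary because $f_\Phi$ is a coarse equivalence.

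By Theorem~\ref{theorem: norm limit of unitaries supported on f_Phi}, there are unitaries $U_n \to \widetilde U$ in norm, each supported on a subset of $Y \times X$ asymptotic to $\operatorname{Graph}(f_\Phi)$. Fix $n$ with $\|U_n - \widetilde U\| < 1$. Then $W := S^* \widetilde U$ is a unitary on $\ell^2(X,H)$, and we want to show it lies in the multiplier algebra $M(C^*_u(X) \otimes \mathbb{K})$. Granting this, $\operatorname{Ad}_{\widetilde U} = \operatorname{Ad}_S \circ \operatorname{Ad}_W$ on $C^*_u(X)\otimes\mathbb{K}$. Conjugation by a unitary multiplier induces the identity on $K$-theory, since it is inner in the multiplier algebra and $K_*$ is unchanged by the standard $2 \times 2$ rotation argument already used in Lemma~\ref{lemma: K-theory of uniform covers}. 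Hence $K_*(\Phi) = K_*(\operatorname{Ad}_S) = K_*(f_\Phi)$.

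\textbf{Main obstacle: showing $W$ is a multiplier.} Lemma~\ref{lemma: multiplier algebra} asks for an approximable operator satisfying conditions (2) and (3) of Definition~\ref{definition: uniform cover}.
\begin{itemize}
\item \emph{Approximability.} $S^*U_n$ has support inside $\operatorname{supp}(S)^{-1} \circ \operatorname{supp}(U_n)$, which is asymptotic to $\operatorname{Graph}(f_\Phi)^{-1}\circ\operatorname{Graph}(f_\Phi)$. This set is an entourage because $f_\Phi$ is coarse and uniformly finite-to-one. So $S^*U_n$ has controlled propagation, and $W$, as the norm limit of the $S^*U_n$, is approximable.
\item \emph{Conditions (2) and (3).} The cover $S$ already satisfies them. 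The unitary $\widetilde U = U \otimes \operatorname{id}_H$ does as well, trivially: it maps $\ell^2(X,V)$ into $\ell^2(Y,V)$ and $\widetilde U^*$ maps $\ell^2(Y,V)$ into $\ell^2(X,V)$. These conditions are stable under composition, so $W$ satisfies them.
\end{itemize}
The key point is that these two properties come from different factorisations. The $H$-direction finiteness comes from writing $W = S^*\widetilde U$ exactly. The control comes from the approximation by the $U_n$, which need not preserve the finite-dimensional filtration in $H$.

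One point still needs checking: that Lemma~\ref{lemma: multiplier algebra} really only uses approximability of $T$ itself, so that its products with $b\otimes F$ are approximable and land in the right $\ell^2(X,W)$. As written, that lemma's proof does exactly this. If it falls short, we would instead verify directly that $W(b\otimes F)$ is a norm limit of the elements $S^*U_n(b\otimes F)$ compressed to $\ell^2(X,W')$ for a suitable finite-dimensional $W'$, using Proposition~\ref{proposition: dense subset of the stabilisation}.

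Finally, we need compatibility with the stabilisation isomorphism, i.e.\ that $K_*(\Phi)$ corresponds to $K_*(\operatorname{Ad}_{\widetilde U})$. This holds because $\widetilde U$ commutes with $1 \otimes e$ for a rank-one projection $e$, so $\operatorname{Ad}_{\widetilde U}$ restricts to $\Phi$ on the corner $C^*_u(X)\otimes e$.
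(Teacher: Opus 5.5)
Your proposal is correct and takes essentially the same route as the paper: take a unitary uniform cover $S$ of $f_\Phi$ and write $\Phi=\operatorname{Ad}_U$; then $S^*(U\otimes\operatorname{id}_H)$ is approximable and satisfies conditions (2) and (3), so by Lemma~\ref{lemma: multiplier algebra} it is a unitary multiplier of $C^*_u(X)\otimes\mathbb{K}$ and acts trivially on $K$-theory. One correction: the set $\{(x,x') \mid \partial(f_\Phi(x),f_\Phi(x'))\le R\}$ is an entourage because $f_\Phi$ is a coarse equivalence, hence a coarse embedding, not merely because it is coarse and uniformly finite-to-one (for instance $n\mapsto |n|$ from $\mathbb{Z}$ to $\mathbb{N}$ is coarse and two-to-one, but its fibres have unbounded diameter).
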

	\begin{proof}
		Let $U$ be a uniform cover for $f_{\Phi}$. By~\cite[Lemma 3.1]{vspakula2013rigidity}, any isomorphism between uniform Roe algebras is spatially implemented; hence, there exists a unitary operator $V \colon \ell^2(X) \to \ell^2(Y)$ such that $\Phi = \operatorname{Ad}_V$. Consider the unitary operator
		\[
			\widetilde{U} := U^*(V \otimes \operatorname{id}_H).
		\]
		Note that $\widetilde{U}$ satisfies conditions~(2) and~(3) of Definition~\ref{definition: uniform cover}. By Theorem \ref{theorem: norm limit of unitaries supported on f_Phi}, the unitary $V \otimes \operatorname{id}_H$ is a norm limit of a net $\{T_\lambda\}_\lambda$ of operators supported on $f_{\Phi}$. It follows that $\widetilde{U}$ is a norm limit of operators supported on $\operatorname{id}_X$; in particular, $\widetilde{U}$ is approximable. By Lemma~\ref{lemma: multiplier algebra}, the unitary $\widetilde{U}$ belongs to the multiplier algebra of $C^*_u(X) \otimes \mathbb{K}$ and therefore induces the identity map in $K$-theory. Consequently,
		\[
			\operatorname{id}
				= K_*(\operatorname{Ad}_{\widetilde{U}})
				= K_*(\Phi)^{-1} \circ K_*(f_{\Phi}),
		\]
		and hence the maps $K_*(\Phi)$ and $K_*(f_{\Phi})$ coincide.
	\end{proof}
	\begin{remark}
		In the proof of Proposition~\ref{proposition: HK-square for isomorphisms of uniform Roe algebras}, we showed that there exists a unitary $\tilde{U}$ in the multiplier algebra of $C^*_u(X) \otimes \mathbb{K}$ which intertwines the $*$-isomorphisms $\Phi \otimes \operatorname{id}_H$ and $\operatorname{Ad}_U$, where $U$ is a uniform cover of $f_{\Phi}$. Recall from~\cite[Theorem 2.5]{mingo1987K} that for any unital $C^*$-algebra $A$, the unitary group of the multiplier algebra of $A \otimes \mathbb{K}$ is contractible. Consequently, there exists a continuous path of unitaries $u_t \in \mathcal{M}(C^*_u(X) \otimes \mathbb{K})$ such that $u_0 = \tilde{U}$ and $u_1 = 1$. In particular, the $*$-isomorphisms $\Phi \otimes \operatorname{id}_H$ and $\operatorname{Ad}_U$ are homotopic.
	\end{remark}
	
	By combining Proposition~\ref{proposition: HK-square for isomorphisms of uniform Roe algebras} and Lemma~\ref{lemma: naturality of alpha0}, we deduce that for any $*$-isomorphism of uniform Roe algebras $\Phi \colon C^*_u(X) \to C^*_u(Y)$ the following diagram commutes:
	\begin{equation} \label{equation: HK-square}
		\begin{tikzcd}
			H_0^{\text{uf}}(X; \mathbb{Z})
				\arrow{r}{\alpha_0^X}
				\arrow[swap]{d}{H_0^{\text{uf}}(f_{\Phi})} &
			K_0(C^*_u(X))
				\arrow{d}{K_0(\Phi)}
			\\
			H_0^{\text{uf}}(Y; \mathbb{Z})
				\arrow{r}{\alpha_0^Y} &
			K_0(C^*_u(Y)),
		\end{tikzcd}
	\end{equation}
	where $f_{\Phi}$ denotes the coarse equivalence induced by $\Phi$ via Theorem~\ref{theorem: rigidity theorem}.
	
	\begin{proof}[Proof of Lemma \ref{Body: lemma: A}]
		Let $\mathcal{C}$ be an admissible collection of uniformly locally finite metric spaces such that, for every $(X,d) \in \mathcal{C}$, the comparison map
		\[
			\alpha_0^X \colon H_0^{\text{uf}}(X; \mathbb{Z}) \to K_0(C^*_u(X))
		\]
		is injective. Suppose that $(X,d)$ and $(Y,\partial)$ belong to $\mathcal{C}$ and $\Phi \colon C^*_u(X) \to C^*_u(Y)$ is a $*$-isomorphism. By the commutativity of~\eqref{equation: HK-square}, we obtain
		\[
			\alpha_0^Y \circ H_0^{\text{uf}}(f_{\Phi})([\mathbbm{1}_X])
				=
			K_0(\Phi) \circ \alpha_0^X([\mathbbm{1}_X]),
		\]
		and the right-hand side is exactly the $K_0$-class of $\mathbbm{1}_Y$. Since $\alpha_0^Y$ is injective, it follows that $H_0^{\text{uf}}(f_{\Phi})$ preserves the fundamental class. By Theorem~\ref{theorem: Block-Weinberger-Whyte}, there exists a bijective coarse equivalence that is close to $f_{\Phi}$. Consequently, the coarse equivalence given by Theorem~\ref{theorem: rigidity theorem} is close to a bijective coarse equivalence.
		
		Conversely, let $\mathcal{C}$ be an admissible collection of uniformly locally finite metric spaces such that for any $(X,d)$ and $(Y,\partial)$ in $\mathcal{C}$ satisfing $C^*_u(Y) \cong C^*_u(X)$, the coarse equivalence given by Theorem~\ref{theorem: rigidity theorem} is close to a bijective coarse equivalence. Let $(X,d) \in \mathcal{C}$, and let $\beta \in \ker(\alpha_0^X)$. By passing, if necessary, to a doubling of $(X,d)$, for some $n \ge 1$ the class $\beta$ is given by
		\[
			\beta = [\mathbbm{1}_B] - [\mathbbm{1}_A],
		\]
		for some subsets $A,B \subset X^{(n)}$. Since $\beta$ lies in the kernel of $\alpha_0^X$, the projections $\mathbbm{1}_A$ and $\mathbbm{1}_B$ define the same class in $K_0(C^*_u(X^{(n)}))$. Consequently, there exist $\ell,k \in \mathbb{N}$ and a partial isometry $s$ in $C^*_u(X^{(n)}) \otimes M_{\ell+k+1}(\mathbb{C})$ such that
		\[
			\mathbbm{1}_A \oplus \mathbbm{1}_{X^{(n)}}^{\oplus \ell} \oplus 0^{\oplus k}
			= ss^* \sim s^*s = 
			\mathbbm{1}_B \oplus \mathbbm{1}_{X^{(n)}}^{\oplus \ell} \oplus 0^{\oplus k}.
		\]
		It follows that $\Phi := \operatorname{Ad}_s$ induces an isomorphism between the uniform Roe algebras of $A \sqcup X^{(n+\ell)}$ and $B \sqcup X^{(n+\ell)}$. By Theorem~\ref{theorem: rigidity theorem}, the isomorphism $\Phi$ gives rise to a coarse equivalence
		\[
			f_{\Phi} \colon A \sqcup X^{(n+\ell)} \longrightarrow B \sqcup X^{(n+\ell)}.
		\]
		Recall that, up to closeness, for some (equivalently, any) $\delta \in (0,1)$, there exist constants $R, P \ge 0$ such that the relation $f_{\Phi}$ has the following form:
		\[
			f_{\Phi}
				=
			\bigcup \{C \times D \mid \operatorname{diam}(C) \le R,\ \operatorname{diam}(D) \le P,\ \|\mathbbm{1}_C s \mathbbm{1}_D\| > \delta\}.
		\]
		Since $s$ is approximable, for every $\varepsilon>0$ there exists a controlled propagation operator $t$ such that $\|s-t\|<\varepsilon$. Choose $\varepsilon>0$ with $\varepsilon<\delta$. If $C,D \subset X$ satisfy $\|\mathbbm{1}_C s \mathbbm{1}_D\|>\delta$, then
		\[
			\delta < \|\mathbbm{1}_C s \mathbbm{1}_D\| \le \|\mathbbm{1}_C (s-t) \mathbbm{1}_D\| + \|\mathbbm{1}_C t \mathbbm{1}_D\| < \varepsilon + \|\mathbbm{1}_C t \mathbbm{1}_D\|.
		\]
		In particular, $\|\mathbbm{1}_C t \mathbbm{1}_D\|>\delta-\varepsilon$. Consequently, we obtain a chain of inclusions
		\[
			\begin{split}
				f_{\Phi} = \;&
					\bigcup \{C \times D \mid \operatorname{diam}(C)\le R,\ \operatorname{diam}(D)\le P,\ \|\mathbbm{1}_C s \mathbbm{1}_D\|>\delta\}
				\\ \subset\;&
					\bigcup \{C \times D \mid \operatorname{diam}(C)\le R,\ \operatorname{diam}(D)\le P,\ \|\mathbbm{1}_C t \mathbbm{1}_D\|>\delta-\varepsilon\}
				\\ \subset\;&
					\bigcup \{C \times D \mid \operatorname{diam}(C)\le R,\ \operatorname{diam}(D)\le P,\ \|\mathbbm{1}_C t \mathbbm{1}_D\|\neq 0\}
				\\ \subset\;&
					E_R \circ \operatorname{supp}(t) \circ E_P .
			\end{split}
		\]
		Since $t$ has controlled propagation, its support is an entourage. As entourages are closed under composition, the latter set is also an entourage. In particular, the relation $f_{\Phi}$ is contained in an entourage; therefore, it is close to the identity map on $X$. By assumption, $f_{\Phi}$ can be chosen to be bijective; in particular, $f_{\Phi}$ is a partial translation. Hence the indicator functions of $A \sqcup X^{(n+\ell)}$ and $B \sqcup X^{(n+\ell)}$ define the same class in $H_0^{\mathrm{uf}}(X; \mathbb{Z})$. By substracting the homology class of the indicator function of $X^{(n+\ell)}$, we conclude that $[\mathbbm{1}_A] = [\mathbbm{1}_B]$, and therefore $\beta = 0$. This shows that $\alpha_0^X$ is injective.
	\end{proof}
	
	Note that the proof of the first implication does not rely on the assumption that $\mathcal{C}$ is an admissible collection. Indeed, the argument does not involve passing to subspaces or to doublings of the underlying metric spaces. Consequently, the conclusion of the first implication holds for a single space.
	
	\begin{corollary} \label{corollary: injectivity => bij.rigidity}
		Let $(X,d)$ be a uniformly locally finite metric space for which the $0$th comparison map $\alpha_0^X$ is injective. Then, for any uniformly locally finite metric space $(Y,\partial)$ whose uniform Roe algebra is isomorphic to $C^*_u(X)$, the coarse equivalence provided by Theorem~\ref{theorem: rigidity theorem} is close to a bijective coarse equivalence.
	\end{corollary}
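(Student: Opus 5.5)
The corollary is the first implication $(1)\Rightarrow(2)$ of Lemma~\ref{Body: lemma: A}, specialised to a single pair of spaces. I will rerun that argument and check that it never uses admissibility of a collection, and that it needs injectivity only of $\alpha_0^Y$, never of $\alpha_0^X$.

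First, fix a $*$-isomorphism $\Phi \colon C^*_u(X) \to C^*_u(Y)$ and let $f_\Phi$ be the coarse equivalence it induces via Theorem~\ref{theorem: rigidity theorem}. By Proposition~\ref{proposition: HK-square for isomorphisms of uniform Roe algebras} we have $K_0(f_\Phi) = K_0(\Phi)$. Combined with Lemma~\ref{lemma: naturality of alpha0}, this gives the commutative square~\eqref{equation: HK-square}. Since $\Phi$ is unital, it sends $\mathbbm{1}_X$ to $\mathbbm{1}_Y$, so
\[
	\alpha_0^Y\bigl(H_0^{\mathrm{uf}}(f_\Phi)[\mathbbm{1}_X]\bigr) = K_0(\Phi)\,\alpha_0^X([\mathbbm{1}_X]) = [\mathbbm{1}_Y] = \alpha_0^Y([\mathbbm{1}_Y]).
\]

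The conclusion requires $H_0^{\mathrm{uf}}(f_\Phi)[\mathbbm{1}_X] = [\mathbbm{1}_Y]$, which needs injectivity of $\alpha_0^Y$, whereas the hypothesis concerns $X$. I would transport the hypothesis to $Y$ using the same square. Both $H_0^{\mathrm{uf}}(f_\Phi)$ and $K_0(\Phi)$ are isomorphisms: the first because $f_\Phi$ is a coarse equivalence and $H_0^{\mathrm{uf}}$ is a functor on $\mathbf{Coarse}$, the second because $\Phi$ is a $*$-isomorphism. Hence
\[
	\alpha_0^Y = K_0(\Phi)\circ\alpha_0^X\circ H_0^{\mathrm{uf}}(f_\Phi)^{-1}
\]
is a composite of injective maps, so $\alpha_0^Y$ is injective.

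Then $H_0^{\mathrm{uf}}(f_\Phi)$ preserves the fundamental class, and Theorem~\ref{theorem: Block-Weinberger-Whyte} yields a bijective coarse equivalence close to $f_\Phi$.

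The only step needing care is this transfer of injectivity from $X$ to $Y$; it is immediate from the square, with no passage to subspaces or doublings.
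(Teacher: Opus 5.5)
Your proposal is correct and follows the paper's proof essentially verbatim. You transfer injectivity from $\alpha_0^X$ to $\alpha_0^Y$ via the commutative square, using that $H_0^{\mathrm{uf}}(f_\Phi)$ and $K_0(\Phi)=K_0(f_\Phi)$ are isomorphisms, and then run the first implication of Lemma~\ref{Body: lemma: A} together with Theorem~\ref{theorem: Block-Weinberger-Whyte}; the only cosmetic difference is that the paper phrases the transfer through Lemma~\ref{lemma: naturality of alpha0} with $K_0(f_\Phi)$ rather than through $K_0(\Phi)$.
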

	\begin{proof}
		Let $(X,d)$ and $(Y,\partial)$ be uniformly locally finite metric spaces, and let $\Phi \colon C^*_u(X) \to C^*_u(Y)$ be an isomorphism. Denote by $f_{\Phi}$ the coarse equivalence provided by Theorem~\ref{theorem: rigidity theorem}. Assume that the $0$th comparison map $\alpha_0^X$ is injective. By Lemma~\ref{lemma: naturality of alpha0}, since $H_0^{\mathrm{uf}}(f_{\Phi})$ and $K_0(f_{\Phi})$ are isomorphisms, it follows that the $0$th comparison map $\alpha_0^Y$ is also injective. Consequently, by the proof of Lemma~\ref{Body: lemma: A}, the coarse equivalence $f_{\Phi}$ is close to a bijective coarse equivalence.
	\end{proof}
	
	\begin{remark} \label{remark on rational injectivity}
		In light of Theorem~\ref{theorem: injectivity/isomorphism of extension of scalars}, the injectivity of the $0$th comparison map is equivalent to the injectivity of the rational comparison map
		\[
			\alpha_0^X \otimes \operatorname{id}_{\mathbb{Q}} \colon H_0^{\mathrm{uf}}(X; \mathbb{Z}) \otimes_{\mathbb{Z}} \mathbb{Q} \longrightarrow K_0(C^*_u(X)) \otimes_{\mathbb{Z}} \mathbb{Q}.
		\]
		Indeed, since $\mathbb{Q}$ is a torsion-free abelian group, it is projective; consequently, the functor $\otimes_{\mathbb{Z}} \mathbb{Q}$ preserves injective group homomorphisms. In particular, if $\alpha_0^X$ is injective, then so is $\alpha_0^X \otimes \operatorname{id}_{\mathbb{Q}}$. Conversely, for any torsion-free abelian group $A$, the canonical map
		\[
			i_A \colon A \longrightarrow A \otimes_{\mathbb{Z}} \mathbb{Q}, \qquad a \longmapsto a \otimes 1,
		\]
		is injective. By Theorem~\ref{theorem: injectivity/isomorphism of extension of scalars}, for any uniformly locally finite metric space $(X,d)$, the group $H_0^{\mathrm{uf}}(X; \mathbb{Z})$ is torsion-free. Consider the following commutative diagram:
		\[
			\begin{tikzcd}
				H_0^{\mathrm{uf}}(X; \mathbb{Z}) \otimes_{\mathbb{Z}} \mathbb{Q}
				\arrow{rr}{\alpha_0^X \otimes \operatorname{id}_{\mathbb{Q}}}
				& &
				K_0(C^*_u(X)) \otimes_{\mathbb{Z}} \mathbb{Q}
				\\
				H_0^{\mathrm{uf}}(X; \mathbb{Z})
				\arrow{rr}{\alpha_0^X} \arrow{u}{i} & &
				K_0(C^*_u(X)). \arrow{u}{i}
			\end{tikzcd}
		\]
		It follows that injectivity of $\alpha_0^X \otimes \operatorname{id}_{\mathbb{Q}}$ implies injectivity of $\alpha_0^X$. Consequently, in Lemma~\ref{Body: lemma: A} and its corollaries, it suffices to assume rational injectivity of the $0$th comparison map.
	\end{remark}

	\section{Consequences} \label{section: consequences}
	
	In \cite[Theorem E]{white2020cartan} it is shown that the class $\mathcal{C}(\mathrm{A})$ of uniformly locally finite metric spaces satisfying property~A (equivalently, the class of amenable coarse groupoids) satisfies bijective rigidity. By applying Lemma~\ref{Body: lemma: A}, we obtain that the $0$th comparison map is injective for all uniformly locally finite spaces that satisfy Property~A. The authors of~\cite{bonicke2023dynamic} (see Corollary C) proved that for any ample, principal, second-countable, locally compact, Hausdorff groupoid $\mathcal{G}$ with dynamic asymptotic dimension at most $2$, such that $H_2(\mathcal{G}; \mathbb{Z})$ is finitely generated, there are isomorphisms
	\[
		K_0(C^*_r(\mathcal{G})) \cong
		H_0(\mathcal{G};\mathbb{Z}) \oplus H_2(\mathcal{G};\mathbb{Z}),
		\qquad
		K_1(C^*_r(\mathcal{G})) \cong H_1(\mathcal{G};\mathbb{Z}).
	\]
	They also demonstrated that the result applies to coarse groupoids. For coarse groupoids, the dynamical asymptotic dimension coincides with the asymptotic dimension of the underlying metric space. Consequently, the above isomorphisms hold for all uniformly locally finite metric spaces of asymptotic dimension at most $2$ with finitely generated $H_2^{\text{uf}}(X; \mathbb{Z})$. Since every uniformly locally finite metric space of finite asymptotic dimension has property~A, our results may be viewed as a partial generalisation of the results of \cite{bonicke2023dynamic}. Recently, independently of this work, Vignati showed in~\cite{avignati} that bijective rigidity holds for all uniformly locally finite metric spaces (see Theorem~\ref{Intro: theorem: bij.rigidity}). Moreover, the bijective coarse equivalence constructed therein is close to the coarse equivalence provided by Theorem~\ref{theorem: rigidity theorem}. We obtain the following theorem.
	
	\begin{theorem}[Theorem B] \label{Body: theorem: B}
		Let $(X,d)$ be a uniformly locally finite metric space, and let $\mathcal{G}$ denote its coarse groupoid. Then the comparison map
		\[
			\alpha_0 \colon H_0(\mathcal{G}; \mathbb{Z}) \to K_0(C^*_r(\mathcal{G}))
		\]
		is injective. Moreover, if $(X,d)$ is coarsely connected, then $\alpha_0$ is split injective.
	\end{theorem}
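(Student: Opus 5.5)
Injectivity follows by combining Vignati's Theorem~\ref{Intro: theorem: bij.rigidity} with the implication $(2)\Rightarrow(1)$ of Lemma~\ref{Body: lemma: A}, applied to the admissible collection $\mathcal{C}$ of all uniformly locally finite metric spaces. For this I need condition~(2) in its precise form: whenever $\Phi\colon C^*_u(X)\to C^*_u(Y)$ is an isomorphism, the coarse equivalence $f_\Phi$ of Theorem~\ref{theorem: rigidity theorem} is close to a bijective coarse equivalence. The paper records that the bijection constructed in \cite{avignati} is close to $f_\Phi$, so condition~(2) holds for $\mathcal{C}$. Lemma~\ref{Body: lemma: A} then gives injectivity of $\alpha_0^X$ for every $X$. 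Finally, I transport this to the groupoid statement using the identifications $H_0(\mathcal{G};\mathbb{Z})\cong H_0^{\mathrm{uf}}(X;\mathbb{Z})$ from \cite[Theorem G]{bonicke2023dynamic} and $C^*_r(\mathcal{G})\cong C^*_u(X)$. Under these identifications the groupoid comparison map corresponds to the one in Definition~\ref{definition: comparison map}, since both are induced by the inclusion of the unit-space algebra $C(\beta X)\cong\ell^\infty(X)$.

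For split injectivity when $X$ is coarsely connected, there are two cases. If $X$ is finite, $H_0^{\mathrm{uf}}(X;\mathbb{Z})\cong\mathbb{Z}$ is free, so any injection from it splits. If $X$ is infinite, Theorem~\ref{theorem: injectivity/isomorphism of extension of scalars} identifies $H_0^{\mathrm{uf}}(X;\mathbb{Z})$ with $H_0^{\mathrm{uf}}(X;\mathbb{R})$, which is a real vector space and hence a divisible abelian group. Divisible abelian groups are injective $\mathbb{Z}$-modules. Therefore the identity map of $H_0^{\mathrm{uf}}(X;\mathbb{Z})$ extends along the injection $\alpha_0$ to a homomorphism $K_0(C^*_u(X))\to H_0^{\mathrm{uf}}(X;\mathbb{Z})$, which is the desired left inverse. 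This is simply the splitting of any monomorphism whose source is an injective module.

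The main point requiring care is the compatibility with \cite{avignati}. Lemma~\ref{Body: lemma: A}$(2)$ needs the bijection to be close to $f_\Phi$ itself, not merely the existence of some bijective coarse equivalence. The paper asserts this closeness, so I would invoke it as stated. As a fallback, there is a route that only needs some bijective coarse equivalence $g$ whenever $C^*_u(X)\cong C^*_u(Y)$. In the $(2)\Rightarrow(1)$ argument, the spaces $A\sqcup X^{(n+\ell)}$ and $B\sqcup X^{(n+\ell)}$ both sit in a common doubling, and $f_\Phi$ is close to the identity there. One would then have to compare $g$ with $f_\Phi$ through Whyte's Theorem~\ref{theorem: Block-Weinberger-Whyte}. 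This is harder, so I would rely on the stated closeness.
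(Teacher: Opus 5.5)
\textbf{Verdict: mostly correct, but your finite case rests on a false algebraic claim.}

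Your injectivity argument and your treatment of infinite coarsely connected $X$ match the paper's proof. Vignati's theorem, with the bijection close to $f_\Phi$, verifies condition~(2) of Lemma~A for the class of all uniformly locally finite spaces. Whyte's identification $H_0^{\mathrm{uf}}(X;\mathbb{Z})\cong H_0^{\mathrm{uf}}(X;\mathbb{R})$ then makes the source a divisible, hence injective, $\mathbb{Z}$-module, so $\alpha_0$ splits. Your fallback discussion is not needed.

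The step that fails is the finite case. Freeness of the source does not make an injection split. Freeness (projectivity) lets surjections \emph{onto} the group split, not injections \emph{out of} it: multiplication by $2$ on $\mathbb{Z}$ is injective but not split. So the sentence ``$H_0^{\mathrm{uf}}(X;\mathbb{Z})\cong\mathbb{Z}$ is free, so any injection from it splits'' proves nothing. You have to know where $\alpha_0$ sends the generator, and this is easy to compute:
\begin{itemize}
\item For finite $X$, every operator on $\ell^2(X)$ has finite propagation. Hence $C^*_u(X)=\mathcal{B}(\ell^2(X))\cong M_{|X|}(\mathbb{C})$, and $K_0(C^*_u(X))\cong\mathbb{Z}$ via rank.
\item Every function on $X\times X$ is a uniformly finite $1$-chain, and $\partial_1$ of the indicator of $\{(a,b)\}$ is $\delta_b-\delta_a$. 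So the image of $\partial_1$ is exactly the zero-sum functions, and $H_0^{\mathrm{uf}}(X;\mathbb{Z})\cong\mathbb{Z}$ via $h\mapsto\sum_x h(x)$.
\item Under these identifications $\alpha_0$ sends $[\mathbbm{1}_A]$ to $|A|$. It is therefore the identity of $\mathbb{Z}$, an isomorphism, and trivially split.
\end{itemize}

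You were right to isolate this case. The paper's proof applies Whyte's theorem, which is stated only for infinite $X$, and does not treat finite spaces separately, even though they are automatically coarsely connected. Replace your justification with the computation above.
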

	\begin{proof}
		By~\cite{avignati}, the second assertion of Lemma~\ref{Body: lemma: A} holds; therefore, for any uniformly locally finite metric space $(X,d)$, the comparison map $\alpha_0$ is injective. Suppose now that $(X,d)$ is coarsely connected. By Theorem~\ref{theorem: injectivity/isomorphism of extension of scalars}, the canonical map
		\[
			H_0^{\mathrm{uf}}(X; \mathbb{Z}) \longrightarrow H_0^{\mathrm{uf}}(X; \mathbb{R})
		\]
		is an isomorphism. In particular, $H_0^{\mathrm{uf}}(X; \mathbb{Z})$ carries the structure of a real vector space. It follows that any injective homomorphism from $H_0^{\mathrm{uf}}(X; \mathbb{Z})$ splits.
	\end{proof}
	
	Let $G$ be a finitely generated group equipped with the word-length metric. By~\cite[Proposition~A.10]{diana2017}, its $0$th uniformly finite homology group canonically identifies with the group homology of $G$ with coefficients in $\ell^{\infty}(G, \mathbb{R})$. Since Cayley graphs of finitely generated groups are coarsely connected, we obtain the following application.
	
	\begin{corollary}[Corollary C] \label{Body: corollary: C}
		Let $G$ be a finitely generated group equipped with the word-length metric. The canonical map
		\[
			\alpha_0^G \colon H_0\bigl(G; \ell^{\infty}(G,\mathbb{R})\bigr)
				\longrightarrow
			K_0\bigl(\ell^{\infty}(G) \rtimes_r G\bigr)
		\]
		is split-injective.
	\end{corollary}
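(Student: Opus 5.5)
The plan is to reduce Corollary~C to Theorem~B, applied to the coarse groupoid of $G$ with its word metric, and then identify the source and target of $\alpha_0^G$ with those of the comparison map for $(G,d)$. Fix a finite generating set and let $d$ be the associated word metric. Then $(G,d)$ is uniformly locally finite: every ball of radius $R$ has at most $|B_R(e)|$ elements, by left invariance. It is also coarsely connected with constant $R=1$, since any two elements $g,h$ are joined by a path in the Cayley graph whose consecutive vertices are at distance $1$. Theorem~B therefore applies, and
\[
	\alpha_0 \colon H_0^{\mathrm{uf}}(G;\mathbb{Z}) \longrightarrow K_0\bigl(C^*_u(G)\bigr)
\]
is split injective. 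Concretely, by Theorem~\ref{theorem: injectivity/isomorphism of extension of scalars}, $H_0^{\mathrm{uf}}(G;\mathbb{Z})\cong H_0^{\mathrm{uf}}(G;\mathbb{R})$ is a real vector space, hence divisible, hence an injective $\mathbb{Z}$-module. The injective map $\alpha_0$ from Theorem~B therefore splits.

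Next I would transport this statement along the canonical identifications. On the $K$-theory side, the canonical isomorphism $C^*_u(G)\cong \ell^\infty(G)\rtimes_r G$ sends $\ell^\infty(G)\subset C^*_u(G)$ identically onto the coefficient algebra $\ell^\infty(G)$ of the crossed product. The left translation by $g$ maps to the canonical unitary $u_g$. On the homology side, \cite[Proposition~A.10]{diana2017} identifies $H_0^{\mathrm{uf}}(G;\mathbb{R})$ with $H_0(G;\ell^\infty(G,\mathbb{R}))=\ell^\infty(G,\mathbb{R})_G$, the coinvariants. Composing with the isomorphism $H_0^{\mathrm{uf}}(G;\mathbb{Z})\cong H_0^{\mathrm{uf}}(G;\mathbb{R})$ gives
\[
	H_0\bigl(G;\ell^\infty(G,\mathbb{R})\bigr)\cong H_0^{\mathrm{uf}}(G;\mathbb{Z}).
\]
I take $\alpha_0^G$ to be defined as the composite of this identification with $\alpha_0$, followed by the crossed-product isomorphism. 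It is then split injective, as a composite of a split injection with isomorphisms on both sides.

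The only real point to check is that this composite agrees with the ``canonical'' map one would write down intrinsically. That map sends the class of a bounded integer-valued function $h$ to $[h]\in K_0(\ell^\infty(G)\rtimes_r G)$, via $K_0(\ell^\infty(G))=\ell^\infty(G,\mathbb{Z})\to K_0(\ell^\infty(G)\rtimes_r G)$, and factors through coinvariants. For this I would argue as follows.
\begin{enumerate}
	\item Both maps are induced by the inclusion $\ell^\infty(G)\hookrightarrow$ the ambient algebra at the level of $K_0(\ell^\infty(G))=C_0^{\mathrm{uf}}(G;\mathbb{Z})$.
	\item Under the identification of \cite{diana2017}, the coinvariant relation $h-g\cdot h$ corresponds to the boundary of the $1$-chain supported on the graph of translation by $g$. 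This is exactly a partial translation in the sense of Remark~\ref{remark: 1-cycles are generated by part. trans}, whose boundary is killed by conjugating with $u_g$.
	\item Rational (real) coefficients cause no trouble, since every real class is represented by an integral one through Theorem~\ref{theorem: injectivity/isomorphism of extension of scalars}.
\end{enumerate}
This compatibility check is the main, though mild, obstacle. Once it is settled, the corollary follows directly from Theorem~B.
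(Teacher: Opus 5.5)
Your argument follows the paper's route: apply Theorem~B to the coarsely connected space $(G,d)$, use Whyte's theorem to see that $H_0^{\mathrm{uf}}(G;\mathbb{Z})\cong H_0^{\mathrm{uf}}(G;\mathbb{R})$ is a real vector space, and transport along the identification of \cite[Proposition~A.10]{diana2017} and $C^*_u(G)\cong\ell^\infty(G)\rtimes_r G$; your compatibility check fills in detail the paper leaves implicit. Two small caveats: Whyte's isomorphism requires $G$ to be infinite (for finite $G$ the source is $\mathbb{R}$ and the target is $K_0(M_{|G|}(\mathbb{C}))\cong\mathbb{Z}$, so the corollary must be read for infinite $G$, which the paper also leaves unsaid), and with your left-invariant word metric it is the right translations $x\mapsto xg$, not the left translations, that have bounded displacement, so the right translations are the partial translations in your step (2) and the operators that correspond to the unitaries $u_g$.
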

	
	Corollary~\ref{Body: corollary: C} relies on the fact that real vector spaces are injective objects in the category of abelian groups. In general, however, $H_0^{\mathrm{uf}}(X; \mathbb{Z})$ need not be injective. For example, if $X$ is finite, then $H_0^{\mathrm{uf}}(X; \mathbb{Z})$ is isomorphic to $\mathbb{Z}$. As another example, consider the subset $\mathbb{N}_s := \{n^2 \mid n \in \mathbb{N}\}$ of $\mathbb{N}$ equipped with the induced metric. This space is sometimes referred to as the \emph{sparse natural numbers}. One readily verifies that
	\[
		H_0^{\mathrm{uf}}(\mathbb{N}_s; \mathbb{Z})
		\cong
		\ell^{\infty}(\mathbb{N}, \mathbb{Z}) / \{f \in c_0(\mathbb{N}, \mathbb{Z}) \mid \sum_{n \in \mathbb{N}} f(n) = 0\}.
	\]
	The latter group is not divisible and therefore is not injective. Note that, for any abelian group $A$, the group $A \otimes_{\mathbb{Z}} \mathbb{Q}$ is a $\mathbb{Q}$-vector space and therefore injective. It is an immediate consequence of Theorem~\ref{Body: theorem: B} and Remark~\ref{remark on rational injectivity} that the rational comparison map
	\[
		\alpha_0 \otimes \operatorname{id}_{\mathbb{Q}} \colon H_0(\mathcal{G}; \mathbb{Z}) \otimes_{\mathbb{Z}} \mathbb{Q} \to K_0(C^*_r(\mathcal{G})) \otimes_{\mathbb{Z}} \mathbb{Q}
	\]
	is split-injective. In \cite{proietti2025cherncharactertorsionfreeample}, the authors show that for any second-countable, locally compact, Hausdorff, ample groupoid $\mathcal{G}$ with torsion-free stabilisers, there is a group isomorphism
	\begin{equation} \label{equation: rational HK-conjecture}
		K_*^{\mathrm{top}}(\mathcal{G}, C_0(\mathcal{G}^{(0)})) \otimes_{\mathbb{Z}} \mathbb{Q} \longrightarrow \bigoplus_{k \ge 0} H_{*+2k}(\mathcal{G}; \mathbb{Z}) \otimes_{\mathbb{Z}} \mathbb{Q},
	\end{equation}
	where the left-hand side denotes the topological $K$-theory of $\mathcal{G}$. In particular, assuming the rational injectivity of the Baum--Connes assembly map, one recovers the rational split-injectivity of the comparison map. As coarse groupoids are, in general, not second-countable, one has to provide an additional limiting argument akin to the one used in \cite[Section 5.3]{bonicke2023dynamic} to apply the machinery of \cite{proietti2025cherncharactertorsionfreeample}. This might serve as an alternative proof of the split-injectivity of the rational comparison map for coarse groupoids whose Baum--Connes assembly map is rationally injective. For example, by the results of \cite{skandalis2002coarse, tu1999conjecture}, the class $\mathcal{C}(\mathrm{CE})$ of uniformly locally finite metric spaces admitting a coarse embedding into a Hilbert space satisfies the Baum–Connes conjecture with coefficients; in particular, for these spaces, the Baum-Connes assembly map is rationally injective.
	
	\begin{remark} \label{remark: Baum-Connes}
		Let $\mathcal{G}$ be a principal, Hausdorff, ample groupoid with compact unit space and let
		$$
			\mu \colon K^{\mathrm{top}}(\mathcal{G}; C(\mathcal{G}^{(0)})) \to K_0(C^*_r(\mathcal{G}))
		$$
		denote the Baum--Connes assembly map for $\mathcal{G}$ with trivial coefficents. It can be shown that there exists a natural group homomorphism
		$$
			\alpha^0 \colon H_0(\mathcal{G}; \mathbb{Z}) \to K^{\mathrm{top}}(\mathcal{G}; C(\mathcal{G}^{(0)})),
		$$
		such that the $0$th comparison map $\alpha_0$ factors through $\alpha^0$ and the Baum-Connes map. In the case of coarse groupoids, Theorem~\ref{Body: theorem: B} establishes an unconditional low-dimensional injectivity of the Baum-Connes map. We shall return to this perspective in our future work.
	\end{remark}
	
	\bibliographystyle{plain}
	\bibliography{RURAAICM.bib}
	
\end{document}